\documentclass{article}
\usepackage[utf8]{inputenc}
\usepackage{amssymb}
\usepackage[pdftex]{graphicx}
\usepackage{a4wide}
\usepackage{amsmath}
\usepackage{euscript}
\usepackage{amsthm}
\usepackage{mathtools}
\usepackage{amsopn}
\usepackage{stackrel}
\usepackage[all]{xy}
\usepackage{setspace}
\DeclareGraphicsExtensions{.pdf,.png,.jpg}
\usepackage{float}
\usepackage{blkarray}
\usepackage[pdftex, colorlinks, linkcolor=blue, citecolor=blue, urlcolor=blue]{hyperref}
\usepackage{mathabx}
\usepackage{ytableau}
\usepackage{pgfplots}
\pgfplotsset{width=7cm, compat=1.10}
\usepgfplotslibrary{fillbetween}
\usepackage[ruled,vlined]{algorithm2e}
\usetikzlibrary{matrix}
\usepackage{mathdots}

\newcommand{\R}{\mathbb{R}}

\newcommand{\C}{\mathbb{C}}

\newcommand{\Sym}{\mathcal{S}}

\newcommand{\PSD}{\mathrm{PSD}}

\newtheorem{theorem}{Theorem}[section]
\newtheorem{cor}{Corollary}[section]

\newtheorem{prop}{Proposition}[section]
\theoremstyle{definition}

\newtheorem{definition}{Definition}[section]
\newtheorem{ex}{Example}[section]

\newtheorem{remark}{Remark}

\newenvironment{continue}[1]
  {\newcommand\continueref{\ref{#1}}\continuex}
  {\endcontinuex}

\title{Symmetry Adapted Gram Spectrahedra}
\author{Alexander Heaton\footnote{Max Planck Institute for Mathematics in Sciences, Leipzig, and TU Berlin; {\tt heaton@mis.mpg.de}}, Serkan Ho\c sten\footnote{Mathematics Department, San Francisco State University; {\tt serkan@sfsu.edu}}, and Isabelle Shankar\footnote{Mathematics Department, UC Berkeley; {\tt isabelle\_shankar@berkeley.edu}} }

\begin{document}

\maketitle

\begin{abstract}
    This paper explores the geometric structure of the spectrahedral cone, called the symmetry adapted PSD cone, and the symmetry adapted Gram spectrahedron of a symmetric polynomial.  In particular, we determine the dimension of the symmetry adapted PSD cone, describe its extreme rays, and discuss the structure of its matrix representations.  We also consider the symmetry adapted Gram spectrahedra for specific families of symmetric polynomials including binary symmetric polynomials, quadratics, and ternary quartics and sextics which give us further insight into these symmetric SOS polynomials. Finally, we discuss applications of the theory of sums of squares and symmetric polynomials which arise from symmetric function inequalities.  
\end{abstract}

\section{Introduction}\label{section:introduction}

We study the spectrahedra that arise in the theory of symmetric and sums of squares (SOS) polynomials. For a finite group $G$, we are interested in sums of squares polynomials which are $G$-invariant. We start with a representation of $G$ on $\R^n$, extending by linear substitution to a representation $D:G \to GL(V)$ on $V = \R[x_1,\dots,x_n]_d$, the vector space of degree $d$ homogeneous polynomials in $n$ indeterminates. The dimension
of $V$ is $N = \binom{n+d-1}{d}$, and we denote the cone of $N \times N$ positive semidefinite matrices by $\PSD_N$. Choosing a basis for $V$ gives matrices $D(g)$, and we obtain the {\it symmetry adapted} version of $\PSD_N$, namely,
\begin{equation*}
    \PSD_N^G := \bigg\{ Q \in \PSD_N \hspace{0.3cm} \bigg\rvert \hspace{0.3cm} D(g)^T Q D(g) = Q, \text{ for all }g \in G \bigg\}.
\end{equation*}
 We give precise definitions in Section \ref{section:preliminaries} below, but briefly, for a given polynomial $f$ of degree $2d$ which is invariant under the action of a group $G$, the \textit{symmetry adapted Gram spectrahedron} of $f$ is the closed, convex, semi-algebraic set
\begin{equation*}
    K_f^G := L_f \cap \PSD_N^G.
\end{equation*}
Here, $L_f$ is the linear space of symmetric matrices $Q$ which represent $f$ as $f(x) = m(x)^T Q m(x)$, and $m(x)$ is a column vector whose entries form a basis for $V$, usually chosen to be all monomials of degree $d$ in the variables $x_1,\dots,x_n$.

The \textit{Gram spectrahedron} $K_f = L_f \cap \PSD_N$ for a polynomial $f$ is a set parameterizing all ways to write $f$ as a sum of squares. Its geometry is important for understanding sums of squares representations of $f$. For example, the matrices of lowest rank contained in $K_f$ encode the ways to write $f$ as a sum of a minimal number of squares. These matrices of lowest rank are extremal points of $K_f$. Characterizing the minimal number of squares  
is a topic that has been widely studied \cite{CasselsEllisonPfister1971, ChoiLamReznick1995sumsofsquaresofrealpolys, GramSpectrahedra2017chuaplaumannsinnvinzant, 2019linearalgebramethod,  Robinson1973somepolysnotsumofsquares, Scheiderer2017sumofsquareslengthofrealforms,  Yiu2001sumofsquareslength}. 
The symmetry adapted Gram spectrahedron $K_f^G$ is a smaller and simpler convex set for which we can ask similar questions.
It was introduced in \cite{GP2004} and has since been used in a variety of applications \cite{BGSV2012, RSST2018, RST2018}.

In this paper, we mainly focus on the case $G=S_n$, the symmetric group, and the polynomials we consider will be the usual symmetric polynomials \cite{Macdonald}. Section \ref{section:preliminaries} offers a brief summary of the background needed from representation theory and SOS polynomials.  Interestingly, the sum of squares for a $G$-invariant SOS polynomial of degree $2d$ is itself composed of invariant partial sums, one from each isotypic component appearing in the $G$-representation on polynomials of degree $d$.  In Section \ref{section:properties-of-symm-adapted-PSD-cone} we go on to focus on the symmetry adapted cone $\PSD_N^G$. In particular, we compute the dimension of $\PSD_N^G$, characterize its extremal rays, and in the case of $G=S_n$, we present the block in any symmetric matrix $Q\in \PSD_N^{S_n}$ corresponding to the trivial representation. 
Section \ref{section:binary-quadratic} collects
our results on binary and quadratic symmetric polynomials that are SOS. In the binary case, we compute the symmetry adapted 
matrix representations of all symmetric polynomials, and in the quadratic case, we do the same, and prove that, as the number of indeterminates tends to infinity, the ratio of SOS symmetric quadratic forms to all symmetric quadratic forms is $\frac{1}{8}$. Another interesting consequence obtained is that symmetric quadratic SOS polynomials in $n$ variables can only be sums of $1$, $n-1$ or $n$ squares. 
In Section \ref{section:ternary-symmetric}, we start with the classic case of ternary quartics, describing the associated symmetry adapted PSD cone.  We then completely describe the geometric structure of the symmetry adapted Gram spectrahedron for a generic, smooth, positive, symmetric ternary quartic including the rank of the matrices on its boundary.  Further, we provide necessary conditions on the coefficients for a symmetric ternary quartic to be SOS.  
We continue the section by going up in degree and considering degree six symmetric polynomials in three variables.  Here we show 
that the rank of a matrix in the symmetry adapted Gram spectrahedron of a generic symmetric ternary sextic will be at least $4$. 
We end with Section \ref{sec:inequalities} where we consider an application of the SOS machinery to symmetric polynomial inequalities.  Included are three posets on partitions of $8$, $9$, and $10$  which represent SOS certifications on the difference of term-normalized homogeneous polynomials on the nonnegative orthant in $\R^3$.  
These results indicate many explicit counterexamples to Conjecture 7.2 in \cite{CGS2011}.

\section{Preliminaries}\label{section:preliminaries}

\subsection{Representation theory and symmetry adapted bases}

 A representation of a group $G$ is a homomorphism $ \rho: G \to GL(V)$ where $GL(V)$ is the group of invertible linear transformations of a vector space $V$. If $V$ is finite-dimensional, we also write $GL(n)$ for $n = \mathrm{dim} \, V$. 
 A subrepresentation of $V$ is a subspace $U \subset V$ which is invariant under the action of $G$.
If the only subrepresentations of $V$ are $\{0\}$ and $V$, we say that $V$ is irreducible. The character $\chi_\rho:G \to \mathbb{C}$ is defined by taking the trace of each $\rho(g)$ and is used to decompose representations. A representation which admits a direct sum decomposition $V = \oplus \, V_i$ with each $V_i$ irreducible is said to be completely reducible. 
Representations of finite groups are completely reducible. When we decompose $V$ into irreducibles $V_1, \ldots, V_s$, each $V_i$ appears with multiplicity~$m_i$:
\begin{equation*}
    V = m_1 V_1 \oplus \cdots \oplus m_s V_s.
\end{equation*}
This means that there exists a basis of $V$ such that $\rho(g)$ becomes the matrix $D(g)$ for $g \in G$ and is block diagonal with
$m_i$ blocks corresponding to $V_i$ where each block is 
$n_i \times n_i$ with $n_i = \dim V_i$. Here we denote by $D(g)$ the matrix written in a chosen basis for the linear map $\rho(g)$.

In general, these $m_i$ matrices of size $n_i \times n_i$ corresponding to $V_i$ are {\it not} identical. Fortunately, one can choose a different basis of $V$ with respect to which the representation matrices $\tilde{D}(g)$ for all $g \in G$ are block
diagonal where the $m_i$ blocks corresponding to $V_i$ are identical. See \cite[Section 5.2]{FS1992} or \cite[p. 23]{Serre1977linearRepresentationsOfFiniteGroups} for Algorithm \ref{alg:symmetry-adapted-basis}
to compute such a basis. In other words, the algorithm constructs a change of basis matrix $T$ such that $T^{-1} D(g) T$ is block diagonal with this extra nice property for all $g \in G$. Such a basis is known
as a \emph{symmetry adapted basis}. A symmetry adapted basis can also be used to simplify linear operators $P \in \text{Hom}(V,V)$ which commute
with the representation matrices $D(g)$ for all $g\in G$. 

In this paper, we are concerned with the field of real numbers $\R$, but to more easily and uniformly describe the representation theory involved we work with $\C$. Irreducible representations of finite groups over $\C$ come in three types \cite[p. 108]{Serre1977linearRepresentationsOfFiniteGroups}. All of them give rise to representations of $G$ over $\R$, although the dimension may stay the same (type 2) or double (types 1 or 3). The characters of the representations over $\R$ are either equal to the character $\chi$ of the representation over $\C$ (type 2) or equal to $\chi + \overline{\chi}$ or $2\chi$ (types 1 or 3). By averaging over the group, an invariant inner product can be created which allows each of these real representations to be written using real, orthogonal matrices. For many results, the orthogonality of the matrices is important. Therefore we will assume that all irreducibles appearing in the isotypic decompositions under consideration are of type 2. For $ S_n$ all irreducibles are of type 2, so this assumption is always justified. For other groups, to see if an irreducible representation is type 2, one needs check if $\frac{1}{|G|} \sum_{g \in G} \chi(g^2) = 1$ \cite[p. 109]{Serre1977linearRepresentationsOfFiniteGroups}. Whenever we use the complexification $\C \otimes_\R V$, recall that adjustments can be made so that all the matrices are real, and the dimensions will not change.

\begin{theorem} \label{thm:symmetry-adapted-matrix} \cite[Theorem 2.5]{FS1992}
Let $\rho: G \to GL(V)$ be a representation of the finite group $G$,
and let 
\begin{equation*}
    V = m_1 V_1 \oplus \cdots \oplus m_s V_s
\end{equation*}
be the direct sum decomposition into irreducible representations
$V_i$ with $\dim V_i=n_i$ and multiplicity $m_i$. Then every 
$P \in \text{Hom}(V,V)$ such that $D(g)P = PD(g)$ for all $g\in G$
has the following form in a symmetry adapted basis:
$$ P = \left( \begin{array}{cccc} 
P_1 & 0 & \dots & 0 \\
0  &  P_2 & \dots & 0\\
\vdots & \vdots & \ddots & \vdots \\
0  & 0 & \dots & P_s \end{array} \right)
$$
where each $P_i$ is an $(m_in_i) \times (m_in_i)$ matrix 
$$P_i = \left( \begin{array}{cccc}
\mu_{11}^i I_{n_i} & \mu_{12}^iI_{n_i} & \dots & \mu_{1m_i}^iI_{n_i} \\
\mu_{21}^iI_{n_i} & \mu_{22}^iI_{n_i} & \dots & \mu_{2m_i}^iI_{n_i} \\
\vdots & \vdots & \ddots & \vdots \\
\mu_{m_i1}^iI_{n_i} & \mu_{m_i2}^iI_{n_i} & \dots & \mu_{m_i m_i}^iI_{n_i} \end{array} \right).
$$ 
\end{theorem}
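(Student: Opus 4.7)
The plan is to apply Schur's lemma twice: first to separate the isotypic components, and then to extract the tensor-product structure within each component.

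First, I would show that $P$ preserves the isotypic decomposition $V = W_1 \oplus \cdots \oplus W_s$, where $W_i := m_i V_i$. Letting $\pi_j : V \to W_j$ denote the projection along the other isotypic components, $\pi_j$ is itself $G$-equivariant, so the composite $\pi_j \circ P|_{W_i}$ is a $G$-equivariant map between $W_i$ and $W_j$. For $i \neq j$, every irreducible constituent of $W_i$ is isomorphic to $V_i$ while every irreducible constituent of $W_j$ is isomorphic to $V_j$, so Schur's lemma forces $\pi_j \circ P|_{W_i} = 0$. Hence $P(W_i) \subseteq W_i$, and $P$ has the block-diagonal shape $P = \mathrm{diag}(P_1, \ldots, P_s)$ in any basis compatible with the isotypic decomposition.

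Next, fix $i$ and work inside $W_i$. The defining property of a symmetry adapted basis is that $W_i$ splits as $V_i^{(1)} \oplus \cdots \oplus V_i^{(m_i)}$, with each $V_i^{(k)}$ carrying the \emph{identical} $n_i \times n_i$ matrix realization of $\rho$; equivalently, the coordinate-matching map $\varphi_{k\ell} : V_i^{(k)} \to V_i^{(\ell)}$ that sends the $r$-th basis vector of one copy to the $r$-th basis vector of the other is $G$-equivariant and is represented by $I_{n_i}$. The restriction $\pi_\ell \circ P|_{V_i^{(k)}}$ is again a $G$-equivariant map between isomorphic irreducibles, so by Schur's lemma it equals $\mu_{k\ell}^i \varphi_{k\ell}$ for a unique scalar $\mu_{k\ell}^i$, whose matrix in the symmetry adapted basis is precisely $\mu_{k\ell}^i I_{n_i}$. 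Collecting these scalars into $M_i = (\mu_{k\ell}^i) \in \C^{m_i \times m_i}$, the block $P_i$ is exactly $M_i \otimes I_{n_i}$, which is the asserted form.

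The main obstacle is the bookkeeping in the second step: one must unpack the construction of the symmetry adapted basis carefully enough to know that each $\varphi_{k\ell}$ really is represented by $I_{n_i}$, so that Schur's scalar becomes a scalar block rather than an arbitrary intertwiner. Once this normalization is in place, both applications of Schur's lemma are routine. Over $\R$, the appeal to Schur's lemma producing a one-dimensional space of intertwiners requires that each $V_i$ be of type~2 (absolutely irreducible), which the excerpt has already assumed; for $G = S_n$ this holds automatically.
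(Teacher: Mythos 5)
Your proposal is correct and is essentially the same argument the paper gives, just phrased in the invariant operator language (projections $\pi_j$, intertwiners $\varphi_{k\ell}$) rather than by explicitly partitioning the matrix $P$ into blocks $P_{ij}^{tu}$ and applying Schur's lemma to $\Sigma_i(g)P_{ij}^{tu} = P_{ij}^{tu}\Sigma_j(g)$. Both proofs rest on the same two applications of Schur's lemma — one to kill cross-isotypic blocks, one to force each within-isotypic sub-block to be a scalar multiple of $I_{n_i}$ — and your emphasis on $\varphi_{k\ell}$ being represented by the identity in a symmetry adapted basis is exactly the point the paper's matrix calculation encodes implicitly. (One small bookkeeping note: with your convention $\pi_\ell\circ P|_{V_i^{(k)}} = \mu_{k\ell}^i\varphi_{k\ell}$, the block matrix of $P_i$ on column vectors is $M_i^T\otimes I_{n_i}$ rather than $M_i\otimes I_{n_i}$; this is a harmless transpose that does not affect the structural conclusion.)
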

\begin{proof}
In a symmetry adapted basis we have
$$ D(g) = \left( \begin{array}{cccc}
D_1(g) & 0 & \dots & 0 \\
0 & D_2(g) & \dots & 0 \\
\vdots & \vdots & \ddots & \vdots \\
0 & 0 & \dots & D_s(g) \end{array} \right)
$$
where each $D_i(g)$ is an $(m_in_i) \times (m_in_i)$
block diagonal matrix with $m_i$ identical $n_i \times n_i$
matrices along its diagonal:
$$ D_i(g) = \left( \begin{array}{cccc}
\Sigma_i(g) & 0 & \dots & 0 \\
0  & \Sigma_i(g) & \dots & 0 \\
\vdots & \vdots & \ddots & \vdots \\
0 & 0 & \dots & \Sigma_i(g) \end{array} \right).
$$
After partitioning $P$ into 
$(m_in_i) \times (m_jn_j)$ matrices $P_{ij}$ for $i,j = 1, \ldots, s$,
we see that $D(g)P = PD(g)$ implies $D_i(g)P_{ij} = P_{ij}D_j(g)$. We partition each $P_{ij}$ further
$$
P_{ij} = \left( \begin{array}{cccc}
P_{ij}^{11} & P_{ij}^{12} & \dots & P_{ij}^{1m_j} \\
P_{ij}^{21} & P_{ij}^{22} & \dots & P_{ij}^{2m_j} \\
\vdots & \vdots & \ddots & \vdots \\
P_{ij}^{m_i1} & P_{ij}^{m_i2} & \dots & P_{ij}^{m_im_j} \end{array} \right)
$$
and observe that $\Sigma_i(g) P_{ij}^{tu} = P_{ij}^{tu} \Sigma_j(g)$
for all $i,j = 1, \ldots, s$ and $g \in G$. When we view $P_{ij}^{tu}$
as an element of $\text{Hom}(V_i, V_j)$, Schur's Lemma implies
that $P_{ij}^{tu} = 0$ whenever $i \neq j$. Furthermore, 
$P_{ii}^{tu} = \mu_{tu}^iI_{n_i}$.
\end{proof}

\begin{cor} \label{cor:general-dim} Let $V = m_1 V_1 \oplus \cdots \oplus m_s V_s$
be as in Theorem \ref{thm:symmetry-adapted-matrix}. Then the
dimension of the subspace of linear operators $P \in \text{Hom}(V,V)$
such that $D(g)P = PD(g)$ for all $g \in G$ is $m_1^2 + m_2^2 + \cdots + m_s^2$.
\end{cor}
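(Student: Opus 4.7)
The plan is to deduce the corollary directly from Theorem \ref{thm:symmetry-adapted-matrix} by a parameter count. In a symmetry adapted basis, every $G$-equivariant operator $P$ has the block diagonal form described there, so the space of such operators is linearly parameterized by the scalars $\mu_{tu}^i$ with $i = 1,\dots,s$ and $t,u = 1,\dots, m_i$. I need to verify two things: that every choice of such scalars does yield an equivariant $P$, and that the assignment $(\mu_{tu}^i) \mapsto P$ is injective.

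First I would note surjectivity of the parameterization: any matrix of the block form displayed in the theorem commutes with $D(g)$ for every $g\in G$. This is immediate from the block computation, since $\Sigma_i(g)(\mu_{tu}^i I_{n_i}) = \mu_{tu}^i \Sigma_i(g) = (\mu_{tu}^i I_{n_i})\Sigma_i(g)$, and different isotypic components sit in orthogonal blocks that never interact. So the parameterization fills out the entire commutant.

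Next I would check injectivity: if a matrix $P$ written in a symmetry adapted basis has the block form of the theorem, then its nonzero entries all lie in positions that determine a unique scalar $\mu_{tu}^i$ (each such scalar occurs in the $n_i$ diagonal positions of the $(t,u)$-subblock of $P_i$). So distinct tuples $(\mu_{tu}^i)$ yield distinct matrices $P$, and the parameterization is injective.

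Combining these two observations, the space of equivariant operators is in linear bijection with $\bigoplus_{i=1}^s \R^{m_i \times m_i}$, whose dimension is $\sum_{i=1}^s m_i^2$. The main (very mild) obstacle is simply being careful that the parameterization is bijective; there is no substantial obstacle since Theorem \ref{thm:symmetry-adapted-matrix} has already done all the representation-theoretic work via Schur's Lemma.
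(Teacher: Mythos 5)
Your proof is correct and follows the same route as the paper: Theorem \ref{thm:symmetry-adapted-matrix} gives an upper bound of $\sum_i m_i^2$ via the block form, and one checks that every matrix of that block form actually commutes with all $D(g)$, with the $\mu_{tu}^i$ as free parameters. The paper's argument is just a more compressed version of your bijectivity check.
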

\begin{proof}
The above theorem implies that the dimension is 
at most $m_1^2 + m_2^2 + \cdots + m_s^2$. Every block diagonal
matrix $P= \text{diag}(P_1, \ldots, P_s)$, with $P_i$
as in the theorem, commutes with each $D(g)$.
Since the $m_i^2$ scalars $\mu_{tu}^i$ are free parameters 
for $i=1, \ldots, s$, we get the result.
\end{proof}

A reordering of the symmetry adapted basis which block-diagonalizes the $D(g)$ matrices also leads to a more convenient block-diagonalization of commuting linear operators $P$ such that $P D(g) = D(g) P$.

\begin{cor} \label{cor:nice-diagonal}
Given $V = m_1 V_1 \oplus \cdots \oplus m_s V_s$ and $P \in \text{Hom}(V,V)$ such that $D(g)P=PD(g)$ for all $g\in G $, let 
$$\mathcal{B} = \bigcup_{i=1}^s \bigcup_{k=1}^{m_i} \mathcal{B}_{ik} $$
be an ordered basis that is symmetry adapted where 
 $\mathcal{B}_{ik} = \{v_1^{ik}, v_2^{ik}, \ldots, v_{n_i}^{ik}\}.$
If one reorders the basis vectors in $\bigcup_{k=1}^{m_i} \mathcal{B}_{ik}$
as $\bigcup_{\ell=1}^{n_i} \mathcal{\tilde{B}}_{i \ell}$
with 
$\mathcal{\tilde{B}}_{i \ell} = \{ v_\ell^{i1}, v_\ell^{i2}, \ldots, v_\ell^{i m_i}\}$
then 
$$ P = \left( \begin{array}{cccc} 
\tilde{P}_1 & 0 & \dots & 0 \\
0  &  \tilde{P}_2 & \dots & 0\\
\vdots & \vdots & \ddots & \vdots \\
0  & 0 & \dots & \tilde{P}_s \end{array} \right)
$$ 
where 
$$ \tilde{P}_i = \left( \begin{array}{cccc}
M_i & 0 & \dots & 0 \\
0 & M_i & \dots & 0 \\
\vdots & \vdots & \ddots & \vdots \\
0 & 0 & \dots & M_i \end{array} \right) \quad \quad \mbox{and} \quad \quad 
M_i = \left( \begin{array}{cccc}
\mu_{11}^i & \mu_{12}^i & \dots & \mu_{1 m_i}^i \\
\mu_{21}^i & \mu_{22}^i & \dots & \mu_{2 m_i}^i \\
\vdots & \vdots & \ddots & \vdots \\
\mu^i_{m_i 1} & \mu^i_{m_i 2} & \dots & \mu_{m_i m_i}^i \end{array} \right).
$$
\end{cor}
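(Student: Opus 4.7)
The plan is to apply Theorem \ref{thm:symmetry-adapted-matrix} in the original symmetry adapted basis $\mathcal{B}$ and then track what happens to the matrix entries when the basis of each isotypic component is reordered. Theorem \ref{thm:symmetry-adapted-matrix} already gives the outer block-diagonal decomposition $P = \mathrm{diag}(P_1,\ldots,P_s)$ corresponding to the isotypic components, and this decomposition is preserved by the proposed reordering (which only permutes vectors within each $\bigcup_k \mathcal{B}_{ik}$). So the entire content of the corollary is to analyze a single block $P_i$ under the reindexing.

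Next, I would index vectors in the $i$-th isotypic by pairs $(k,\ell)$ with $k \in \{1,\ldots,m_i\}$ and $\ell \in \{1,\ldots,n_i\}$. In the original ordering, $k$ is the outer index and $\ell$ the inner, and Theorem \ref{thm:symmetry-adapted-matrix} tells us that the entry of $P_i$ at row $(k,\ell)$ and column $(k',\ell')$ equals $\mu^i_{kk'}\,\delta_{\ell\ell'}$. The reordering to $\bigcup_\ell \tilde{\mathcal{B}}_{i\ell}$ simply promotes $\ell$ to the outer index and demotes $k$ to the inner. The entries are unchanged as functions of the labels, so in the new ordering the matrix becomes block diagonal in the outer index $\ell$, with $n_i$ identical diagonal blocks each equal to $M_i = (\mu^i_{kk'})_{k,k'=1}^{m_i}$, which is exactly the claimed form for $\tilde{P}_i$. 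Equivalently, one may observe that the reordering is the perfect shuffle permutation that conjugates the Kronecker product $M_i \otimes I_{n_i}$ into $I_{n_i} \otimes M_i$.

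The main ``obstacle'' here is really just bookkeeping: there is no new representation-theoretic content beyond Theorem \ref{thm:symmetry-adapted-matrix}, only a reindexing argument. The one place where care is required is in naming the basis vectors consistently so that the Kronecker delta factor $\delta_{\ell\ell'}$ in the entry formula translates cleanly into block diagonality after swapping outer and inner indices. Checking a small case such as $m_i = n_i = 2$ explicitly is a convenient sanity check before writing up the general argument.
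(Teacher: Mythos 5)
Your proof is correct and follows essentially the same route as the paper's one-line argument, which simply observes that the reordering of the symmetry adapted basis amounts to reordering the rows and columns of each $P_i$ from Theorem \ref{thm:symmetry-adapted-matrix}. Your version spells out the bookkeeping (the $(k,\ell)$ double index, the $\mu^i_{kk'}\delta_{\ell\ell'}$ entry formula, and the Kronecker/perfect-shuffle reformulation), but this is elaboration rather than a different approach.
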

\begin{proof}
The reordering of the symmetry adapted basis has the effect 
of reordering the rows and columns of $P_i$ in Theorem \ref{thm:symmetry-adapted-matrix} resulting in $\tilde{P}_i$.
\end{proof}

For completeness, we briefly summarize the algorithm in \cite[p. 113]{FS1992} used to compute the change of basis matrix 
to get a symmetry adapted basis as in Corollary \ref{cor:nice-diagonal}. 
This algorithm can also be found in \cite[p. 23]{Serre1977linearRepresentationsOfFiniteGroups}. For each irreducible representation $V_i$ 
of the finite group $G$, let $d^i(g)$ be the matrix representation for 
$g \in G$.  The size of $d^i(g)$ is $n_i \times n_i$ where $n_i$ is the dimension of $V_i$. We furthermore choose $d^i(g)$ to be real orthogonal matrices, which can easily be done when all irreducibles appearing are of type 2, as we assume throughout.

\vskip 0.2cm
\begin{algorithm}[H]
$\bullet$ For each irreducible representation $i = 1,\ldots, s$,
\begin{enumerate}
    \item Compute the matrix
        $$\pi^{i} = \sum_{g \in G} d_{11}^i(g^{-1})D(g).$$
    \item The matrix $\pi^{i}$ will be of rank $m_i$.  Choose $m_i$ linearly independent columns and label them
        $$v_1^{i1}, v_1^{i2}, \ldots, v_1^{i m_i}.$$
        If this set of vectors is not orthonormal, apply Gram-Schmidt (here we utilize a modification to the algorithm \cite[Theorem 5.4]{FS1992}) and relabel each $v_1^{ij}$. 

    \item For each $k = 2, \ldots, n_i$,
        \begin{enumerate}
            \item Compute the matrix
            $$P_{ik} = \frac{n_i}{|G|}\sum_{g \in G} d_{1k}^i(g^{-1})D(g).$$
        \item Define new column vectors
            $$v_k^{ij} = P_{ik}v_1^{ij}$$
            for $j = 1, \ldots, m_i$.
        \end{enumerate}
\end{enumerate}        
$\bullet$ The above generates a symmetry adapted basis for all $m_i$ copies of $V_i$.  Arrange these vectors,
        $$\begin{array}{lcccc}
            \text{Basis } \mathcal{B}_{i1} \text{ for }V_i^1: & v_1^{i1} & v_2^{i1} & \cdots & v_{n_i}^{i1}  \\
            \text{Basis } \mathcal{B}_{i2} \text{ for }V_i^2: & v_1^{i2} & v_2^{i2} & \cdots & v_{n_i}^{i2}  \\
            \hspace{1.2cm} \vdots & \vdots & \vdots & \vdots & \vdots \\
            \text{Basis } \mathcal{B}_{i m_i} \text{ for }V_i^{m_i}: & v_1^{i m_i} & v_2^{i m_i} & \cdots & v_{n_i}^{i m_i}  \\
        \end{array}$$
        
$\bullet$ Construct the change of basis matrix $T$: For each $i = 1, \ldots, s$, the corresponding columns of $T$ will be the $\{v_k^{ij}\}$ in the following order:
            \begin{center}
                \begin{tikzpicture}
                  \matrix (m) [matrix of math nodes]
                  {
                    v_1^{i1}  & v_2^{i1} & \cdots & v_{n_i}^{i1}  \\
                    v_1^{i2}  & v_2^{i2} & \cdots & v_{n_i}^{i2}  \\
                    \vdots  & \vdots & \vdots & \vdots \\
                    v_1^{i m_i}  & v_2^{i m_i} & \cdots & v_{n_i}^{i m_i}  \\
                  };
                  \foreach \i in {1,2,4} \draw [->] (m-1-\i.north west) -- (m-4-\i.south west);
                  \draw [->] (m-4-1.south west) -- (m-1-2.north west);
                  \draw [->] (m-4-2.south west) -- (m-1-3.north west);
                \end{tikzpicture}
            \end{center}
            starting with $v_1^{i1}$ and going down each column of the array and ending with $v_{n_i}^{i m_i}$.
 \caption{Computation of symmetry adapted change of basis matrix as in Corollary~\ref{cor:nice-diagonal}}
 \label{alg:symmetry-adapted-basis}
\end{algorithm}

\subsection{Multiplicities of irreducible representations for $S_n$ acting on homogeneous polynomials}\label{subsection:combinatorial-rule-for-Sn-multiplicities}

In this paper, $V = \R[x_1,\dots,x_n]_d \simeq \R^N$ or its complexification, the vector space of homogeneous degree $d$ polynomials in $x_1,\dots,x_n$. In this section we begin with a representation of $G = S_n$ on $\R^n$ which extends to a representation on $V$ by linear substitution of variables. 
Furthermore, 
we will also use the fact that the irreducible representations 
of $S_n$ are indexed by partitions $\lambda$ \cite{Sagan}. In other words, 
\begin{equation*}
    V = m_{\lambda_1} V_{\lambda_1} \oplus \cdots \oplus m_{\lambda_s} V_{\lambda_s}
\end{equation*}
where $\lambda_1, \lambda_2, \ldots, \lambda_s$ are partitions of $n$.
Here we provide a simple way to determine the multiplicity
of the irreducible representation $V_\lambda$.
For this we need to compute $\langle \chi_\lambda, \, \chi_d \rangle$
where $\chi_\lambda$ is the irreducible character associated
to $V_\lambda$ and $\chi_d$ is the character of the representation
$V = \C[x_1,\ldots,x_n]_d$. We will present a method which we have learned from Mark Haiman.

Recall that the space of complex-valued functions $\C^G$ on a group has a natural inner product $\C^G \times \C^G \to \C$ defined by
\begin{equation*}
    \langle f,g \rangle := \frac{1}{|G|} \sum_{\sigma \in G} \overline{f(\sigma)} g(\sigma).
\end{equation*}
The ring of symmetric functions $\Lambda$ also has a natural inner product. This can be defined by specifying its values on pairs of basis vectors; for instance
\begin{equation*}
    \langle m_\lambda, h_\mu \rangle = \delta_{\lambda \mu}
\end{equation*}
where $m_\lambda$ and $h_\mu$ are monomial and complete homogeneous
symmetric functions associated to partitions $\lambda$ and $\mu$, respectively. Elsewhere in this paper $m_\lambda$ denotes the multiplicity of the irreducible representation $V_\lambda$, but in this section it denotes the monomial symmetric function associated to such a partition.
A key tool for us will be the \textit{Frobenius characteristic map} \cite[p. 351]{Stanley2}. This is a linear map between the subspace of functions $\chi:S_n \to \mathbb{C}$ constant on conjugacy classes and the ring $\Lambda$. It is defined by 
$$\mathrm{ch}(\chi) = \frac{1}{n!}\sum_{\sigma \in S_n} \chi(\sigma)p_{\mathrm{par}(\sigma)}$$
where $\mathrm{par}(\sigma) = \mu$ is the partition
given by the cycle type of $\sigma$, 
and $p_\mu = p_{\mu_1} \cdots p_{\mu_k}$ is the power sum symmetric polynomial \cite[Section 7.7]{Stanley2}. The characteristic map $\mathrm{ch}$ is an isometry \cite[Proposition 7.18.1]{Stanley2} between the subspace of functions constant on conjugacy classes and the space $\Lambda_n$ of degree $n$ symmetric functions, each equipped with their respective inner products. In the former, the irreducible characters $\chi_\lambda$ of $S_n$ form an orthonormal basis, and in the latter, the Schur polynomials $s_\lambda$ form an orthonormal basis. It is a standard fact in representation theory and the theory of symmetric functions that $\mathrm{ch}(\chi_\lambda) = s_\lambda$ 
\cite[Section 4.7]{Sagan}.

\begin{theorem}\label{theorem:multiplicity-as-integer-solutions}
Let $\chi_d$ be the character of the representation of the symmetric group $S_n$ acting on polynomials of degree $d$ in $n$ variables $V = \mathbb{C}[x_1,\dots,x_n]_d$. Let $n(\lambda) = \sum_i (i-1) \lambda_i$ and let $h_i$ be the hook length for the $i$th box in the Young diagram of $\lambda$. The multiplicity of the irreducible representation $V_\lambda$ in $V$ is equal to the number of solutions $y \in \mathbb{N}^n$ of the equation
\begin{equation*}
    h_1 y_1 + \cdots + h_n y_n = d - n(\lambda).
\end{equation*}
\end{theorem}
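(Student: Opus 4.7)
The plan is to compute the multiplicity $m_\lambda$ as an inner product via the Frobenius isometry, convert the resulting sum over symmetric group characters into a symmetric function identity via Cauchy's formula, and finally apply the principal specialization of Schur polynomials.

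\medskip

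\textbf{Step 1: Rewrite the multiplicity via $\mathrm{ch}$.} Since $\mathrm{ch}$ is an isometry sending $\chi_\lambda$ to $s_\lambda$, we have $m_\lambda = \langle \chi_\lambda, \chi_d\rangle = \langle s_\lambda, \mathrm{ch}(\chi_d)\rangle$. Bundling over $d$ I would form the generating function $F(t) := \sum_{d\ge 0} \mathrm{ch}(\chi_d)\, t^d$ and aim to show
\[
\sum_{d \geq 0} m_\lambda\, t^d = \langle s_\lambda, F(t)\rangle = s_\lambda(1,t,t^2,\dots).
\]

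\textbf{Step 2: Trace of $\sigma$ on $\mathrm{Sym}^d(\C^n)$.} Since $V=\C[x_1,\dots,x_n]_d = \mathrm{Sym}^d(\C^n)$ and $\sigma\in S_n$ acts as a permutation matrix with eigenvalues being the $\mu_i$th roots of unity on each cycle, I get the standard formula
\[
\sum_{d\ge 0} \chi_d(\sigma)\, t^d \;=\; \det\bigl(1 - t\sigma|_{\C^n}\bigr)^{-1} \;=\; \prod_{i} \frac{1}{1-t^{\mu_i}},
\]
where $\mu = \mathrm{par}(\sigma)$. Averaging against $p_{\mathrm{par}(\sigma)}/n!$ yields
\[
F(t) \;=\; \sum_{\mu \vdash n} \frac{p_\mu}{z_\mu} \prod_i \frac{1}{1-t^{\mu_i}}.
\]

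\textbf{Step 3: Apply the Cauchy identity.} The key recognition is that $p_{\mu_i}(1,t,t^2,\dots) = 1/(1-t^{\mu_i})$, so the factor $\prod_i(1-t^{\mu_i})^{-1}$ equals $p_\mu(1,t,t^2,\dots)$. Then the Cauchy identity
\[
\sum_\mu \frac{p_\mu(x)\, p_\mu(y)}{z_\mu} \;=\; \sum_\lambda s_\lambda(x)\, s_\lambda(y)
\]
specialized at $y=(1,t,t^2,\dots)$ gives $F(t) = \sum_\lambda s_\lambda\cdot s_\lambda(1,t,t^2,\dots)$. Taking the Hall inner product with $s_\lambda$ produces
\[
\sum_{d\ge 0} m_\lambda\, t^d \;=\; s_\lambda(1,t,t^2,\dots).
\]

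\textbf{Step 4: Principal specialization and coefficient extraction.} Here I invoke the classical principal specialization / $q$-hook length formula
\[
s_\lambda(1,t,t^2,\dots) \;=\; \frac{t^{n(\lambda)}}{\prod_{i=1}^{n}(1-t^{h_i})},
\]
whose right-hand side expands as $t^{n(\lambda)}\sum_{y\in\N^n} t^{h_1 y_1 + \cdots + h_n y_n}$. The coefficient of $t^d$ is exactly the number of $y\in\N^n$ with $h_1 y_1 + \cdots + h_n y_n = d - n(\lambda)$, which completes the argument.

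\medskip

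The main obstacle is \textbf{Step 4}: the principal specialization formula is a nontrivial result in symmetric function theory (provable via Stanley's hook-content formula, or via semistandard Young tableaux and the RSK/Lindström–Gessel–Viennot machinery). I would cite it from, e.g., Stanley EC2 rather than reprove it. Steps 2 and 3 are the conceptual heart and should be stated carefully; Step 1 is immediate from the setup already recalled in the preliminaries.
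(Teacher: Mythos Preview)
Your proof is correct and follows essentially the same route as the paper: both compute $\sum_d \langle \chi_\lambda,\chi_d\rangle\, t^d$ via the Frobenius isometry, identify the result as $s_\lambda(1,t,t^2,\dots)$, and then invoke the principal specialization formula $s_\lambda(1,t,t^2,\dots)=t^{n(\lambda)}/\prod_i(1-t^{h_i})$ (the paper cites Stanley's Propositions 7.19.11 and 7.21.3, just as you suggest). The only difference is cosmetic: where the paper invokes \cite[Exercise 7.73]{Stanley2} for the identity $\sum_d \mathrm{ch}(\chi_d)\,t^d = \sum_{\mu\vdash n} s_\mu(z)\,s_\mu(1,t,t^2,\dots)$, your Steps~2--3 supply a direct derivation of that same identity via the trace formula on $\mathrm{Sym}^d(\C^n)$ together with the degree-$n$ component of the Cauchy identity.
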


\begin{proof}
We first compute the inner product
\begin{align*}
    \langle s_\lambda(z), \sum_d \mathrm{ch}(\chi_d) q^d \rangle &= \langle s_\lambda(z), \sum_{\mu \vdash n} s_\mu(z)s_\mu(1,q,q^2, \ldots) \rangle\\
&= s_\lambda(1,q,q^2, \ldots).
\end{align*}
Here, the first equality  is by \cite[Exercise 7.73]{Stanley2}, while the second one is by orthonormality of the Schur basis for $\Lambda$. Thus we have shown that 
$$ \sum_d \langle \chi_\lambda,\chi_d \rangle q^d 
= \sum_d \langle \mathrm{ch}(\chi_\lambda), \mathrm{ch}(\chi_d)\rangle q^d
= \langle s_\lambda(z), \sum_d \mathrm{ch}(\chi_d) q^d \rangle
= s_\lambda(1,q,q^2, \ldots).
$$
Let $f_\lambda(q)$ be the $q$-analogue of the number of standard Young tableaux of shape $\lambda$, which means that $$f_\lambda(q) = \sum_{T\in SYT(\lambda)}q^{\mathrm{maj}(T)}$$ where $\mathrm{maj}(T)$ is the sum of the descents in $T$, i.e. it is the sum over all $i$ such that $i+1$ appears in a lower row in $T$ than $i$. We let $h(x)$ be the hook length for a box $x$ in the Young diagram of $\lambda$. Using this, we obtain
$$
s_\lambda(1,q,q^2, \ldots)
= \frac{f_\lambda(q)}{(1-q)(1-q^2)\cdots (1-q^n)}
= \frac{q^{n(\lambda)}}{\prod_{x\in \lambda}(1-q^{h(x)})}$$
$$= q^{n(\lambda)}(1 + q^{h_1} + q^{2h_1} + \cdots)(1 + q^{h_2} + q^{2h_2} + \cdots) \cdots (1 + q^{h_n} + q^{2h_n} + \cdots) $$
where the first equality is \cite[Proposition 7.19.11]{Stanley2}, 
the second equality is \cite[Corollary 7.21.3]{Stanley2}, and
$h_i$ are all the hook lengths of $\lambda$. Expanding this out we see that the coefficient of the $q^d$ term is the number of ways we can add multiples of the hook lengths $h_1, 2h_1, \ldots, h_2, 2h_2, \ldots, h_i, 2h_i, \ldots$ so they add up to $d - n(\lambda)$.
\end{proof}

\subsection{Sum of squares and Gram spectrahedra}\label{subsection:SOSs-and-Gram-spectrahedra}

A homogeneous polynomial $f$ in $\R[x_1,\ldots, x_n]_{2d}$ is said 
to be a \emph{sum of squares} (SOS) polynomial if 
$f = q_1^2 + \cdots + q_k^2$ where $q_i \in \R[x_1,\ldots,x_n]_d$, 
$i=1,\ldots, k$. 
Note that while we are searching for an SOS decomposition of a degree $2d$ polynomial, most of the work occurs in the space of degree $d$ polynomials, including the use of representation theory. The following is a well known fact that drives many ideas in the theory and practice of SOS polynomials; see for instance \cite[Theorem 3.39]{Parrilo13}.
\begin{theorem} \label{thm:how-to-write-f}
Let $f(x) \in \R[x_1, \ldots, x_n]_{2d}$ be a homogeneous polynomial and let $m(x)$ be 
a column vector containing a basis of $\R[x_1, \ldots, x_n]_d$.
Then $f(x)$ is a sum of squares if and only if 
there exists an $N \times N$ real positive semidefinite symmetric matrix $Q$ where 
$N = \binom{n+d-1}{d}$ and 
\begin{equation}\label{equation:sos-representation-by-monomial-gram-monomial}
    f(x) = m(x)^T Q m(x).
\end{equation}
\end{theorem}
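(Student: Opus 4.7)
The plan is to prove both directions constructively, using the fact that $m(x)$ is a basis of $\R[x_1,\ldots,x_n]_d$ together with the well-known equivalence between PSD matrices and Gram matrices of vectors. The combinatorial fact $N = \dim \R[x_1,\ldots,x_n]_d = \binom{n+d-1}{d}$ is standard and just sets the size of $Q$.

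For the ``only if'' direction, I would start with an SOS decomposition $f = q_1^2 + \cdots + q_k^2$, where, by the definition adopted in the paper, each $q_i$ already lies in $\R[x_1,\ldots,x_n]_d$. Since $m(x)$ is a basis of that space, there are unique coefficient vectors $c_i \in \R^N$ with $q_i(x) = c_i^T m(x)$. Then
\begin{equation*}
    q_i(x)^2 = (c_i^T m(x))(m(x)^T c_i) = m(x)^T (c_i c_i^T) m(x),
\end{equation*}
and summing over $i$ gives $f(x) = m(x)^T Q m(x)$ with $Q := \sum_{i=1}^k c_i c_i^T$. Each rank-one summand $c_i c_i^T$ is symmetric PSD, so $Q$ is as well.

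For the ``if'' direction, I would start from a symmetric PSD matrix $Q$ with $f(x) = m(x)^T Q m(x)$. The spectral theorem yields a factorization $Q = C C^T$ where $C$ is a real $N \times k$ matrix (for instance, $k = \mathrm{rank}(Q)$, using the nonnegative eigenvalues of $Q$). Writing $C = [c_1 \mid c_2 \mid \cdots \mid c_k]$ gives $Q = \sum_{i=1}^k c_i c_i^T$, and hence
\begin{equation*}
    f(x) = m(x)^T Q m(x) = \sum_{i=1}^k m(x)^T c_i c_i^T m(x) = \sum_{i=1}^k \bigl(c_i^T m(x)\bigr)^2.
\end{equation*}
Setting $q_i(x) := c_i^T m(x) \in \R[x_1,\ldots,x_n]_d$ exhibits $f$ as a sum of squares.

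Neither direction presents a serious obstacle; the argument is essentially a bookkeeping identification between the rank-one PSD decomposition of $Q$ and an SOS expansion of $f$. The only mildly subtle point worth noting is that homogeneity of degree $2d$ for $f$ is compatible with $q_i \in \R[x_1,\ldots,x_n]_d$, which is guaranteed here by the paper's convention that $q_i$ lies in the degree $d$ component; otherwise one would need the extra observation that in any decomposition of a homogeneous polynomial of degree $2d$ as a sum of squares, the degree $d$ components of the summands already suffice.
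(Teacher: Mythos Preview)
Your proof is correct and is exactly the standard argument for this well-known fact. The paper does not actually supply a proof of this theorem; it simply states it and cites \cite[Theorem 3.39]{Parrilo13}, so there is nothing further to compare against.
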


The set of $N\times N$ real symmetric matrices $\Sym^N$ is a vector space isomorphic to $\R^{\binom{N+1}{2}}$. The subset of positive semidefinite matrices $\PSD_N$ is a full-dimensional closed convex cone in this vector
space. It is a semi-algebraic set defined by $2^N - 1$ polynomial inequalities given by forcing the $2^N - 1$ principal minors of an $N \times N$ symmetric matrix to be nonnegative. 

A \textit{spectrahedron} $K$ is a closed convex semi-algebraic set, formed as the intersection of some affine linear space $L \subset \Sym^N$ with $\PSD_N$. Spectrahedra are generalizations of polyhedra, which
are feasible sets of linear programming problems. Similarly, spectrahedra are the feasible sets of \textit{semidefinite programming problems} (SDP):
\begin{equation*}
    \min \quad \langle C, Q \rangle \quad \mbox{such that} \quad Q \in K
\end{equation*}
where $\langle C, Q \rangle := \text{trace}(C^TQ) = \sum_{i=1}^{N} \sum_{j=1}^N C_{ij} Q_{ij}$
is the standard inner product on $\Sym^N$.
SDPs can be solved efficiently. In particular,
whether a spectrahedron is empty or not can be decided by using the dual SDP problem \cite{WSV2000}.

\begin{definition}\label{definition:gram-spectrahedron}
Let $f \in \R[x_1, \ldots, x_n]_{2d}$. The \textit{Gram spectrahedron} of $f$ is the spectrahedron
\begin{equation*}
    K_f := L_f \cap \PSD_N,
\end{equation*}
where $L_f$ is the affine subspace of symmetric matrices $Q$ satisfying (\ref{equation:sos-representation-by-monomial-gram-monomial}).
\end{definition}
\begin{prop}
The Gram spectrahedron $K_f$ is non-empty if and only if $f$ is an SOS polynomial.
\end{prop}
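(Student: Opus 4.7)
The plan is to observe that this proposition is essentially an immediate reformulation of Theorem \ref{thm:how-to-write-f}, so the proof consists of unpacking the definitions and invoking that theorem in both directions.

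First I would argue the reverse implication. Suppose $f$ is a sum of squares, so $f = q_1^2 + \cdots + q_k^2$ with each $q_i \in \R[x_1,\ldots,x_n]_d$. Since $m(x)$ is a column vector whose entries form a basis of $\R[x_1,\ldots,x_n]_d$, each $q_i$ can be written as $q_i(x) = v_i^T m(x)$ for some $v_i \in \R^N$. Setting $Q := \sum_{i=1}^k v_i v_i^T$ gives a symmetric matrix which is positive semidefinite as a sum of rank-one PSD matrices, and a direct computation shows $m(x)^T Q m(x) = \sum_i (v_i^T m(x))^2 = \sum_i q_i(x)^2 = f(x)$. Hence $Q \in L_f \cap \PSD_N = K_f$, so $K_f$ is non-empty.

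For the forward direction, suppose $K_f$ is non-empty and pick $Q \in K_f$. Then $Q$ is positive semidefinite, so by the spectral theorem (or Cholesky decomposition) we can write $Q = \sum_{i=1}^r v_i v_i^T$ for some $v_i \in \R^N$, where $r = \operatorname{rank}(Q)$. Setting $q_i(x) := v_i^T m(x) \in \R[x_1,\ldots,x_n]_d$, the identity $f(x) = m(x)^T Q m(x) = \sum_{i=1}^r q_i(x)^2$ exhibits $f$ as a sum of squares.

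There is no real obstacle here; the only thing to be careful about is that the entries of $m(x)$ form a basis (not just a spanning set) of $\R[x_1,\ldots,x_n]_d$, which ensures that the vectors $v_i$ giving $q_i = v_i^T m(x)$ exist and the Gram representation is consistent. One could alternatively phrase the argument as a single sentence: by definition $K_f \neq \emptyset$ is equivalent to the existence of a PSD symmetric matrix $Q$ with $f(x) = m(x)^T Q m(x)$, which by Theorem \ref{thm:how-to-write-f} is equivalent to $f$ being SOS.
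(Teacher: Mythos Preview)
Your proof is correct and takes essentially the same approach as the paper: the proposition is stated without a separate proof, being regarded as an immediate restatement of Theorem~\ref{thm:how-to-write-f}. Your explicit unpacking via $Q = \sum_i v_i v_i^T$ in both directions is exactly the standard argument underlying that theorem, and your one-sentence summary at the end is precisely how the paper treats it.
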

In other words, determining if a polynomial is SOS is equivalent to checking the feasibility of an SDP. Gram spectrahedra have been
studied intensively in \cite{BGPS-Low-rank-SOS, ChoiLamReznick1995sumsofsquaresofrealpolys, GramSpectrahedra2017chuaplaumannsinnvinzant, GP2004, PSV2011}, to name a few.

\subsection{Symmetry adapted Gram spectrahedra}

This article focuses on SOS polynomials invariant under the linear action of a group $G$. Therefore we start with a representation of $G$ on $\mathbb{R}^n$. A polynomial $f$ is $G$-invariant if $f(g^{-1}x) = f(x)$ for all $g \in G$. The $S_n$-invariant polynomials are the usual symmetric polynomials. The ring of $G$-invariant polynomials of degree $2d$ will be denoted $\mathbb{R}[x_1,\dots,x_n]_{2d}^G$.

The action of $G$ on $\mathbb{R}^n$ extends to a representation $D:G \to GL(V)$ for $V = \mathbb{R}[x_1,\dots,x_n]_d$ with matrices $D(g)$ with respect to a chosen basis. Let $m(x)$ be the column vector whose entries form a basis for $V$. For \textit{any} (possibly non-invariant) polynomial $f \in \mathbb{R}[x_1,\dots,x_n]_{2d}$ we can write $f(x) = m(x)^T Q m(x)$ for some $Q \in \Sym^N$. Hence $g \cdot f(x) = m(x)^T D(g)^T Q D(g) m(x)$ for all $g \in G$, and if $f$ is $G$-invariant then $f(x) = m(x)^T D(g)^T Q D(g) m(x)$ for all $g \in G$.

\begin{prop} \label{prop:existence-of-symmetry-adapted-Q}
If $f$ is a $G$-invariant  polynomial in $\R[x_1,\ldots,x_n]_{2d}^G$ then there exists $Q\in \Sym^N$
such that $f(x) = m(x)^T Q m(x)$ where $Q = D(g)^T Q D(g)$ for all $g \in G$.
\end{prop}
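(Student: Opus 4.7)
The plan is to use the standard Reynolds averaging trick from invariant theory. The key observation is that for any polynomial $f \in \R[x_1,\ldots,x_n]_{2d}$ we can always find some symmetric matrix $Q_0 \in \Sym^N$ with $f(x) = m(x)^T Q_0 m(x)$ (the existence of a Gram matrix representation does not require positive semidefiniteness, only that $f$ has degree $2d$). Starting from such a $Q_0$, I would define
\begin{equation*}
Q := \frac{1}{|G|} \sum_{g \in G} D(g)^T Q_0 D(g).
\end{equation*}

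The first step is to verify that $Q$ still represents $f$. Using the discussion just before the statement, for each $g \in G$ we have $m(x)^T D(g)^T Q_0 D(g) m(x) = (g \cdot f)(x)$, so averaging and using the $G$-invariance of $f$ gives
\begin{equation*}
m(x)^T Q m(x) = \frac{1}{|G|} \sum_{g \in G} (g \cdot f)(x) = \frac{1}{|G|} \sum_{g \in G} f(x) = f(x).
\end{equation*}
Also, $Q$ is symmetric because each $D(g)^T Q_0 D(g)$ is symmetric.

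The second step is to check $G$-invariance of $Q$. For any $h \in G$,
\begin{equation*}
D(h)^T Q D(h) = \frac{1}{|G|} \sum_{g \in G} D(gh)^T Q_0 D(gh) = \frac{1}{|G|} \sum_{g' \in G} D(g')^T Q_0 D(g') = Q,
\end{equation*}
where the second equality uses the substitution $g' = gh$ together with the fact that right-multiplication by $h$ is a bijection of $G$.

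There is no serious obstacle here; the only subtle point worth mentioning explicitly in the write-up is the existence of an initial symmetric Gram matrix $Q_0$ representing $f$, which one may justify either by appealing to Theorem \ref{thm:how-to-write-f} (noting that it is a standard fact about polynomial Gram representations that does not require positive semidefiniteness) or by constructing $Q_0$ directly from the coefficients of $f$ by splitting the coefficient of each monomial $x^\alpha$ evenly across the pairs $(x^\beta, x^\gamma)$ with $\beta + \gamma = \alpha$.
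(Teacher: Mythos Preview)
Your proposal is correct and follows essentially the same argument as the paper: start from any symmetric Gram matrix $Q_0$ representing $f$ and average it over $G$ via $Q = \frac{1}{|G|}\sum_{g\in G} D(g)^T Q_0 D(g)$. If anything, your write-up is more complete, since you explicitly verify the invariance $D(h)^T Q D(h) = Q$ and flag the (minor) subtlety that the existence of $Q_0$ does not require positive semidefiniteness.
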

\begin{proof}
By Theorem \ref{thm:how-to-write-f} there exists $Q' \in \Sym^N$ such that $f(x) = m(x)^T Q' m(x)$.  Since $f$ is $G$-invariant, 
$f(x) = m(x)^T D(g)^T Q' D(g) m(x)$ for all $g \in G$. Now
let 
$$
Q = \frac{1}{|G|} \sum_{g \in G} D(g)^T Q' D(g).
$$
\end{proof}

\begin{definition}\label{definition:NEW-symm-adapted-gram-spectrahedron}
Let $f \in \R[x_1,\ldots, x_n]_{2d}^G$ be a $G$-invariant polynomial for some representation of $G$ on $\mathbb{R}^n$. Let $D:G \to GL(V)$ be the representation of $G$ on $V = \mathbb{R}[x_1,\dots,x_n]_d$ given by linear substitution. The \textit{symmetry adapted Gram spectrahedron} of $f$ is 
\begin{equation*}
    K_f^G := L_f \cap \PSD_N^G,
\end{equation*}
where
\begin{equation*}
    \PSD_N^G := \bigg\{ Q \in \PSD_N \hspace{0.3cm} \bigg\rvert \hspace{0.3cm} D(g)^T Q D(g) = Q, \text{ for all }g \in G \bigg\}.
\end{equation*}
Here, $L_f$ is the affine space of symmetric matrices $Q$ satisfying $f(x) = m(x)^T Q m(x)$ for $m(x)$ a column vector whose entries form a basis of $V$, and $D(g)$ are the matrices of $D$ in this basis. The set $\PSD_N^G$ consists of all positive semidefinite matrices which are fixed by the action of $G$. We call this the \textit{symmetry adapted PSD cone}.
\end{definition}

\begin{cor} \label{cor:characterize-psd-cone}
Let $V = \R[x_1,\ldots, x_n]_d$ and let $D: G \to GL(V)$ be the representation of $G$ on $V$ obtained by linear substitution from a representation of $G$ on $\R^n$. Assume that all irreducible representations appearing in the isotypic decomposition
\begin{equation*}
    \mathbb{C} \otimes_\mathbb{R} V = m_1 V_1 \oplus \cdots \oplus m_s V_s
\end{equation*}
are of type 2, with $\dim V_i =n_i$ and multiplicity $m_i$. Then there exists a basis for $V$ such that a symmetric matrix $Q \in \Sym^N$ is in $\PSD_N^G$ if and only if 
\begin{equation} \label{eq:nice-form}
Q = \left( \begin{array}{cccc} 
\tilde{Q}_1 & 0 & \dots & 0 \\
0  &  \tilde{Q}_2 & \dots & 0\\
\vdots & \vdots & \ddots & \vdots \\
0  & 0 & \dots & \tilde{Q}_s \end{array} \right)
\quad \quad
\mbox{where}
\quad \quad
 \tilde{Q}_i = \left( \begin{array}{cccc}
Q_i & 0 & \dots & 0 \\
0 & Q_i & \dots & 0 \\
\vdots & \vdots & \ddots & \vdots \\
0 & 0 & \dots & Q_i \end{array} \right)
\end{equation}
with $Q_i \in \PSD_{m_i}$ for all $i=1,\ldots,s$ and $n_i$ identical
copies in $Q_i$.
\end{cor}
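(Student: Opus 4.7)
The plan is to reduce this corollary to Corollary \ref{cor:nice-diagonal} and then handle the PSD part separately, since the block form of a commuting operator has already been worked out.

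First I would observe that the invariance condition $D(g)^T Q D(g) = Q$ is equivalent to a commutation relation. Under the standing assumption that all irreducibles in the decomposition are of type 2, we may choose an invariant inner product so that each $D(g)$ is real orthogonal, i.e.\ $D(g)^T = D(g)^{-1}$. Multiplying $D(g)^T Q D(g) = Q$ on the left by $D(g)$ then gives $Q D(g) = D(g) Q$ for all $g \in G$. Thus $Q \in \PSD_N^G$ if and only if $Q$ is a PSD symmetric operator that commutes with the representation.

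Next I would invoke Corollary \ref{cor:nice-diagonal} with $P = Q$. Working in the reordered symmetry adapted basis, $Q$ must take the form $\mathrm{diag}(\tilde{Q}_1, \ldots, \tilde{Q}_s)$ where each $\tilde{Q}_i$ consists of $n_i$ identical diagonal copies of an $m_i \times m_i$ matrix $M_i$ with entries $\mu^i_{tu}$. I would then use the symmetry of $Q$: since transposing does not affect the block structure produced by the reordering, we get $\mu^i_{tu} = \mu^i_{ut}$, so each $M_i$ is symmetric. Setting $Q_i := M_i$ gives the claimed form of \eqref{eq:nice-form}, and conversely, any matrix of this form is symmetric and commutes with all $D(g)$, hence satisfies $D(g)^T Q D(g) = Q$.

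Finally I would handle the positive semidefiniteness: a block diagonal matrix is PSD iff each diagonal block is PSD, so $Q \succeq 0$ iff every $\tilde{Q}_i \succeq 0$. Since $\tilde{Q}_i$ is itself block diagonal with $n_i$ identical copies of $Q_i$, it is PSD iff $Q_i \in \PSD_{m_i}$, and the equivalence is established.

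The only subtle step is the passage from $D(g)^T Q D(g) = Q$ to the commutation hypothesis needed for Corollary \ref{cor:nice-diagonal}; this is where the type 2 assumption and the averaged inner product giving orthogonal $D(g)$ are essential. Everything else is a direct application of the earlier structure theorem together with the elementary fact that block diagonal matrices with identical blocks are PSD iff the repeated block is PSD.
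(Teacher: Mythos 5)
Your proof is correct and follows essentially the same route as the paper's: convert the invariance condition $D(g)^T Q D(g) = Q$ to the commutation $QD(g) = D(g)Q$ by choosing, via the type 2 hypothesis and the averaged invariant inner product, a symmetry adapted basis in which the $D(g)$ are real orthogonal, then apply Corollary \ref{cor:nice-diagonal} and impose symmetry and positive semidefiniteness block by block. Your explicit remark that symmetry of $Q$ forces each $M_i$ to be symmetric is a nice unpacking of the paper's brief ``it only remains to require symmetry,'' but the argument is the same.
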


\begin{proof}
By Corollary \ref{cor:nice-diagonal} an arbitrary matrix $Q$ commutes with all $D(g)$ if and only if it has the stated block-diagonal form in a symmetry adapted basis. If all matrices $D(g)$ are orthogonal, requiring $D(g)^T Q D(g) = Q$ is the same as requiring $Q D(g) = D(g) Q$. Since the irreducibles are of type 2, the matrices $d^i$, and therefore $\pi^i$ and $P_{ik}$, can be chosen with real entries in Algorithm \ref{alg:symmetry-adapted-basis}. Thus, the symmetry adapted basis can be written as real linear combinations of the original basis vectors. By using the invariant inner product
\begin{equation*}
    \langle v, w \rangle := v^T \left( \frac{1}{|G|}\sum_{g \in G} D(g)^T D(g) \right) w,
\end{equation*}
the symmetry adapted basis can further be adjusted so that the matrices $D(g)$ in that basis are orthogonal matrices. To carry this out, one can apply Gram-Schmidt using the invariant inner product above. It only remains to require symmetry and positive semi-definiteness. This is the condition stated above, that $Q_i \in \PSD_{m_i}$.
\end{proof}

As one might expect $\PSD_N^G$ and $K_f^G$ are simpler, smaller, and more structured objects than their counterparts $\PSD_N$ and $K_f$ when $f$ is $G$-invariant. The rest of this article is devoted to convincing the reader that this is indeed the case.

\begin{ex} \label{ex:icosahedral} 
The focus of this paper is the case $G = S_n$. However, we include an example with the symmetry group $G = I_h$ of an icosahedron. 
All 10 irreducible representations of $I_h$ are of type 2. We continue this example in Section \ref{subsection:extreme-rays} to demonstrate extremal rays of rank $>1$. 
This group consists of $120$ invertible $3 \times 3$ orthogonal matrices. Generators are, for instance,
\begin{equation*}
    \left(\begin{array}{rrr}
-1 & 0 & 0 \\
0 & -1 & 0 \\
0 & 0 & 1
\end{array}\right) , \left(\begin{array}{rrr}
0 & 0 & 1 \\
1 & 0 & 0 \\
0 & 1 & 0
\end{array}\right) , \left(\begin{array}{rrr}
\frac{1}{2} & -\frac{1}{4} \, \sqrt{5} - \frac{1}{4} & \frac{1}{\sqrt{5} + 1} \\
\frac{1}{4} \, \sqrt{5} + \frac{1}{4} & \frac{1}{\sqrt{5} + 1} & -\frac{1}{2} \\
\frac{1}{\sqrt{5} + 1} & \frac{1}{2} & \frac{1}{4} \, \sqrt{5} + \frac{1}{4}
\end{array}\right) , \left(\begin{array}{rrr}
-1 & 0 & 0 \\
0 & -1 & 0 \\
0 & 0 & -1
\end{array}\right).
\end{equation*}
The action on $\mathbb{R}^3$ extends to an action on 
$V = \R[x_1,x_2,x_3]_2$. The $6 \times 6$ matrices $\widetilde{D(g)}$ for all $120$ elements $g \in I_h$ written in the monomial basis $\{x_1^2, x_1x_2, x_1x_3, x_2^2, x_2x_3, x_3^2\}$ are
\begin{equation*} \tiny
    \left(\begin{array}{rrr}
g_{11} & g_{12} & g_{13} \\
g_{21} & g_{22} & g_{23} \\
g_{31} & g_{32} & g_{33}
\end{array}\right) \xmapsto{\widetilde{D}} \left(\begin{array}{rrrrrr}
g_{11}^{2} & g_{11} g_{21} & g_{11} g_{31} & g_{21}^{2} & g_{21} g_{31} & g_{31}^{2} \\
2 \, g_{11} g_{12} & g_{12} g_{21} + g_{11} g_{22} & g_{12} g_{31} + g_{11} g_{32} & 2 \, g_{21} g_{22} & g_{22} g_{31} + g_{21} g_{32} & 2 \, g_{31} g_{32} \\
2 \, g_{11} g_{13} & g_{13} g_{21} + g_{11} g_{23} & g_{13} g_{31} + g_{11} g_{33} & 2 \, g_{21} g_{23} & g_{23} g_{31} + g_{21} g_{33} & 2 \, g_{31} g_{33} \\
g_{12}^{2} & g_{12} g_{22} & g_{12} g_{32} & g_{22}^{2} & g_{22} g_{32} & g_{32}^{2} \\
2 \, g_{12} g_{13} & g_{13} g_{22} + g_{12} g_{23} & g_{13} g_{32} + g_{12} g_{33} & 2 \, g_{22} g_{23} & g_{23} g_{32} + g_{22} g_{33} & 2 \, g_{32} g_{33} \\
g_{13}^{2} & g_{13} g_{23} & g_{13} g_{33} & g_{23}^{2} & g_{23} g_{33} & g_{33}^{2}
\end{array}\right).
\end{equation*}
The resulting $6 \times 6$ matrices above will not be orthogonal matrices. However, we can create the matrix
\begin{equation*}
    S := \frac{1}{|G|} \sum_{g \in G} \widetilde{D(g)}^T \widetilde{D(g)}
\end{equation*}
which we use to define the invariant inner product $\langle v,w \rangle := v^T S w$. In this case,
\begin{equation*}
    S = \left(\begin{array}{rrrrrr}
\frac{7}{5} & 0 & 0 & -\frac{1}{5} & 0 & -\frac{1}{5} \\
0 & \frac{4}{5} & 0 & 0 & 0 & 0 \\
0 & 0 & \frac{4}{5} & 0 & 0 & 0 \\
-\frac{1}{5} & 0 & 0 & \frac{7}{5} & 0 & -\frac{1}{5} \\
0 & 0 & 0 & 0 & \frac{4}{5} & 0 \\
-\frac{1}{5} & 0 & 0 & -\frac{1}{5} & 0 & \frac{7}{5}
\end{array}\right).
\end{equation*}
Applying a modified Gram-Schmidt to the monomial basis we can create a new basis $u_1,\dots,u_6$ for which the representation matrices become orthogonal. Collecting the new basis vectors in the columns of a matrix $U$ we create orthogonal matrices $D(g) = U^{-1} \widetilde{D(g)} U$ for all $g \in I_h$. A useful fact is that $U^{-1} = U^T S$. Consider $m(x)^T I m(x)$ for $m(x)$ the column vector containing the monomials of degree 2. This would produce the polynomial
\begin{equation*}
    x_1^4 + x_1^2 x_2^2 + x_1^2 x_3^2 + x_2^4 + x_2^2 x_3^2 + x_3^4,
\end{equation*}
which is not $I_h$-invariant. Proposition \ref{proposition:invariant-isotypic-partial-sums} below implies that, in the monomial basis, the identity matrix is not in $PSD_N^{I_h}$, as can also be checked directly. However, if we apply the change of basis and extract the polynomial corresponding to the identity matrix $f = (U^T m)^T I (U^T m)$ we obtain the $I_h$-invariant polynomial
\begin{equation*}
    f = \frac{3}{4} \, x_{1}^{4} + \frac{3}{2} \, x_{1}^{2} x_{2}^{2} + \frac{3}{4} \, x_{2}^{4} + \frac{3}{2} \, x_{1}^{2} x_{3}^{2} + \frac{3}{2} \, x_{2}^{2} x_{3}^{2} + \frac{3}{4} \, x_{3}^{4}.
\end{equation*}
In the basis given by the column vector $U^T m$, the 2-dimensional symmetry adapted PSD cone $\PSD_6^{I_h}$ is given by the 63 inequalities arising from the principal minors of the matrix given (to 5 digits) by
\begin{equation*}\footnotesize
    \left(\begin{array}{rrrrrr}
\frac{13}{28} \, q_{55} + \frac{15}{28} \, q_{66} & 0 & 0 & -0.61859 \, q_{55} + 0.61859 \, q_{66} & 0 & -0.73193 \, q_{55} + 0.73193 \, q_{66} \\
0 & q_{55} & 0 & 0 & 0 & 0 \\
0 & 0 & q_{55} & 0 & 0 & 0 \\
-0.61859 \, q_{55} + 0.61859 \, q_{66} & 0 & 0 & \frac{2}{7} \, q_{55} + \frac{5}{7} \, q_{66} & 0 & -0.84515 \, q_{55} + 0.84515 \, q_{66} \\
0 & 0 & 0 & 0 & q_{55} & 0 \\
-0.73193 \, q_{55} + 0.73193 \, q_{66} & 0 & 0 & -0.84515 \, q_{55} + 0.84515 \, q_{66} & 0 & q_{66}
\end{array}\right).
\end{equation*}

\end{ex}

We close this section with the observation that constructing SOS decompositions with symmetry adapted bases has another advantage. Namely, the partial sums of squares are $G$-invariant polynomials themselves when one groups them according to the isotopic components. This result was also pointed out in \cite[pp. 107-112]{GP2004}, but we would like to call attention to it, as well as provide a fully explicit proof. We then use this result to prove that every matrix in $PSD_N^G$ produces a $G$-invariant polynomial.

\begin{prop}\label{proposition:invariant-isotypic-partial-sums}
Let $f \in \mathbb{R}[x_1,\dots,x_n]_{2d}^G$ be a $G$-invariant polynomial with real coefficients  and let every irreducible appearing with nonzero multiplicity in $\mathbb{C}[x_1,\dots,x_n]_d = \mathbb{C} \otimes_\mathbb{R} \mathbb{R}[x_1,\dots,x_n]$ be of type 2.  If $f$ is an SOS polynomial then
\begin{equation}
    f = \sum_{\alpha_1 =1 }^{r_1} q_{1,\alpha_1}^2 + \sum_{\alpha_2 = 1}^{r_2} q_{2,\alpha_2}^2 + \cdots + \sum_{\alpha_s = 1}^{r_s} q_{s,\alpha_s}^2
\end{equation}
where each $q_{i,\alpha_i}$  is a polynomial of degree $d$ appearing in the $i$th isotypic component $m_i V_i$ of
\begin{equation*}
    \C[x_1,\ldots,x_n]_d = m_1 V_1 \oplus \cdots \oplus m_s V_s.
\end{equation*}
Further, each partial sum of squares  $\sum_{\alpha_i=1}^{r_i} q_{i,\alpha_i}^2$
is a $G$-invariant polynomial, with $r_i = \mathrm{rank}(Q_i)$ as in Corollary \ref{cor:characterize-psd-cone}. By choosing bases agreeing with the real representations corresponding to each isotypic component, each $q_{i,\alpha_i}$ may be chosen with real coefficients.
\end{prop}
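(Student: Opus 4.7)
The plan is to extract the desired decomposition directly from an invariant Gram matrix. Since $f$ is $G$-invariant and SOS, Proposition \ref{prop:existence-of-symmetry-adapted-Q} yields some $Q \in \PSD_N^G$ with $f(x) = m(x)^T Q m(x)$. I would then pass to a symmetry adapted basis via Algorithm \ref{alg:symmetry-adapted-basis}, in which Corollary \ref{cor:characterize-psd-cone} puts $Q$ into the block diagonal form $Q = \tilde{Q}_1 \oplus \cdots \oplus \tilde{Q}_s$, where each $\tilde{Q}_i$ is built from $Q_i \in \PSD_{m_i}$ of rank $r_i$.

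Next, I would split $Q$ along isotypic components as $Q = P_1 + \cdots + P_s$, where $P_i$ has $\tilde{Q}_i$ in its $i$th block and zeros elsewhere. Since in the symmetry adapted basis each $D(g)$ is itself block diagonal across the isotypic components, the identity $D(g)^T Q D(g) = Q$ breaks into block-by-block identities, so each $P_i$ also satisfies $D(g)^T P_i D(g) = P_i$; hence $P_i \in \PSD_N^G$. Setting $f_i(x) := m(x)^T P_i m(x)$ gives $f = f_1 + \cdots + f_s$, and the calculation $f_i(g^{-1} x) = m(x)^T D(g)^T P_i D(g) m(x) = f_i(x)$, using $D(g) m(x) = m(g^{-1} x)$, shows each $f_i$ is $G$-invariant.

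Within each isotypic block I would take a real rank-one factorization $Q_i = \sum_{\alpha=1}^{r_i} w_{i,\alpha} w_{i,\alpha}^T$, available because $Q_i$ is a real PSD matrix of rank $r_i$. Lifting each term back through the symmetry adapted basis, the resulting polynomials $q_{i,\alpha_i}$ are $\R$-linear combinations of symmetry adapted basis vectors spanning $m_i V_i$, and so lie in this isotypic component. The type 2 hypothesis lets Algorithm \ref{alg:symmetry-adapted-basis} run over $\R$, guaranteeing the reality of the coefficients.

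The step I expect to require the most care is tracking the rank-one lifting correctly under the reordering of Corollary \ref{cor:nice-diagonal}: since $\tilde{Q}_i$ contains $n_i$ copies of $Q_i$, each rank-one piece $w_{i,\alpha} w_{i,\alpha}^T$ produces $n_i$ scalar squares inside $f_i$, and one should verify that these $n_i$ squares combine appropriately into the $G$-invariant partial sum indexed by $\alpha_i$.
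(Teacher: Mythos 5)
Your proposal is correct, and it takes a genuinely different route from the paper's proof. Both proofs begin by invoking Proposition \ref{prop:existence-of-symmetry-adapted-Q} to obtain an invariant Gram matrix $Q \in \PSD_N^G$ and passing to a symmetry adapted basis via Corollary \ref{cor:characterize-psd-cone}. The paper then proves invariance of the $i$th partial sum by an explicit computation: it forms the $m_i \times m_i$ matrix $P_i(x) = \sum_{j=1}^{n_i} v_j^i (v_j^i)^T$ of symmetry adapted basis polynomials, lets a group element act entrywise, and uses the orthogonality relations $\sum_j d^i_{\alpha j}(g) d^i_{\beta j}(g) = \delta_{\alpha\beta}$ of the irreducible matrix coefficients to show that every \emph{entry} of $P_i(x)$ is $G$-invariant, from which invariance of $\langle Q_i, P_i(x)\rangle$ follows. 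Your proof instead observes abstractly that because both $D(g)$ and $Q$ are block diagonal across isotypic components in the symmetry adapted basis, the invariance identity $D(g)^T Q D(g) = Q$ decouples block by block, so the matrix $P_i$ obtained by zeroing out all but the $i$th isotypic block still satisfies $D(g)^T P_i D(g) = P_i$; invariance of $f_i = m^T P_i m$ is then immediate. Your route is shorter and cleaner, while the paper's direct computation yields the stronger byproduct that every entry of $P_i(x)$ (not merely its pairing with $Q_i$) is an invariant polynomial — a fact that also explains, as you flag at the end, why the $n_i$ scalar squares arising from a single rank-one piece $w_{i,\alpha}w_{i,\alpha}^T$ already combine into an invariant polynomial $w_{i,\alpha}^T P_i(x) w_{i,\alpha}$. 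Your concern about the rank-one bookkeeping under the reordering of Corollary \ref{cor:nice-diagonal} is a legitimate notational caution but not a gap: a rank-$r_i$ factorization of $Q_i$ does produce $r_i n_i$ scalar squares, and the partial-sum invariance you establish holds regardless of how those squares are grouped.
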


\begin{proof}
Let $v^i_j$ be the column vector $[v^{i1}_j, v^{i2}_j, \dots, v^{im_i}_j]^T$ of basis polynomials chosen in Algorithm \ref{alg:symmetry-adapted-basis} as an orthonormal basis for the column space of the $j$th projection operator for the $i$th isotypic component. Since $V_i$ is of type 2, these basis vectors can be chosen as polynomials with real coefficients, and such that the matrices $d^i(g)$ are orthogonal. Let $Q_i$ be the matrices appearing in Corollary \ref{cor:characterize-psd-cone}. Then the partial sum of squares for the $i$th isotypic component can be rewritten
\begin{align*}
    \sum_{\alpha_i =1}^{r_i} q_{i,\alpha_i}^2 &= \sum_{j=1}^{n_i} (v^i_j)^T Q_i (v^i_j)\\
     &= \left\langle Q_i, \sum_{j=1}^{n_i} (v^i_j) (v^i_j)^T \right\rangle\\
     &= \left\langle Q_i, P_i(x) \right\rangle
\end{align*}
where $P_i(x)$ is an $m_i \times m_i$ matrix with polynomial entries
and $r_i = \mathrm{rank}(Q_i)$. Specifically, the $(k,\ell)$ entry of the matrix $P_i(x)$ is given by
\begin{equation}\label{equation:Pi-entries}
    p^i_{k,\ell} = \sum_{j=1}^{n_i} v^{ik}_j v^{i\ell}_j.
\end{equation}
Letting $d^i(g) = (d_{\alpha,\beta}^i)$ for $g \in G$ be the orthogonal matrices for the real representation associated to the $i$th isotypic component, we have the relations
\begin{equation*}
    \sum_{j = 1}^{n_i} (d_{\alpha j}^i) (d_{\beta j}^i) = \delta_{\alpha \beta}.
\end{equation*}
Recall for each $k=1, \ldots, m_i$  the entry $v^{ik}_j$ of the column vector $v^i_j$ is a symmetry adapted basis polynomial which \textit{transforms} like the $j$th basis vector of the $i$th irreducible representation:
\begin{equation*}
     g \cdot v^{ik}_j  \, = \,  \sum_{\alpha = 1}^{n_i} d_{\alpha j}^i v^{ik}_\alpha.
\end{equation*}
Acting with the group element $g \in G$ we have
\begin{align*}
    \sum_{j=1}^{n_i} (v^i_j) (v^i_j)^T \mapsto &\sum_{j = 1}^{n_i} \left[ \begin{array}{c}
        \vdots \\
        \sum_{\alpha=1}^{n_i} d_{\alpha j}^i v^{ik}_\alpha \\
        \vdots
    \end{array} \right] \left[ \begin{array}{ccc}
        \cdots & \sum_{\beta=1}^{n_i} d_{\beta j}^i v^{i \ell}_\beta & \cdots
    \end{array} \right] \\
     & = \sum_{j=1}^{n_i} \left[ \begin{array}{ccc}
         \ddots & (k,\ell)\text{ entry}  = & \\
          & \sum_{(\alpha,\beta) \in [n_i]\times [n_i]} d_{\alpha j}^i d_{\beta j}^i v^{ik}_\alpha v^{i \ell}_\beta & \\
          & & \ddots
     \end{array} \right].
\end{align*}
Pulling the sum over $j=1,\ldots, n_i$ inside to each individual entry of the matrix we see that the orthogonality relations zero out all terms except those giving the $(k,\ell)$ entry of $P_i(x)$. Therefore, each of the entries of $P_i(x)$ will be itself an invariant polynomial, and
hence 
$\sum_{\alpha_i=1}^{r_i} q_{i,\alpha_i}^2 = \langle Q_i, P_i(x)\rangle$
is invariant. Note that a factorization of $Q_i$ will still be required to find the $r_i$ explicit squares $q_{i,\alpha_i}^2$ as usual.
\end{proof}

\begin{ex}\label{example:H21-H111-partial-sums-invariant}
Consider the polynomial $(H_{21} - H_{111})(x_1^2,x_2^2,x_3^2) = $
\begin{align*} 
	& \frac{1}{18}(x_1^4 + x_1^2x_2^2 + x_1^2x_3^2 + x_2^4 + x_2^2x_3^2 + x_3^4)(x_1^2 + x_2^2 + x_3^2) - \frac{1}{27}(x_1^2 + x_2^2 + x_3^2)^3\\
	&= \frac{1}{54}x_1^6 + \frac{1}{54}x_2^6 + \frac{1}{54}x_3^6 - \frac{1}{18} x_1^2x_2^2x_3^2 
\end{align*}
which is an $S_3$-invariant (symmetric) polynomial. We will define 
a family of such polynomials in Section \ref{sec:inequalities}.
One matrix in its symmetry adapted Gram spectrahedron is
$$\frac{1}{108}\begin{pmatrix}
4 & -2\sqrt{2} & 0 & 0 & 0& 0& 0& 0& 0 &0\\
-2\sqrt{2} & 2 & 0& 0 & 0& 0& 0& 0& 0& 0\\
 0 & 0 & 0 & 0 & 0& 0& 0& 0& 0 & 0\\
 0 & 0 & 0 & 1 & \sqrt{2} & 0  & 0 & 0 & 0 & 0 \\
 0 & 0 & 0 &  \sqrt{2}  & 2 & 	0 & 0 & 0& 0& 0 \\
 0 & 0 & 0 &   0 & 0 &  0 & 0 & 0& 0& 0\\
 0 & 0 & 0 & 0  & 0 & 0& 1 & \sqrt{2}   & 0 & 0 \\
 0 & 0 & 0 & 0 & 0 & 0& \sqrt{2}  & 2 & 	 0& 0 \\
 0 & 0 & 0 &   0 & 0 &  0 & 0 & 0& 0& 0\\
 0 & 0 & 0 & 0 & 0 & 0 & 0 & 0 & 0 & 6
\end{pmatrix}. $$
The rows and columns of this matrix correspond to polynomials which
form a symmetry adapted basis, and using these we can write our polynomial as 
\begin{align*} 
	(H_{21} - H_{111})&(x_1^2,x_2^2,x_3^2) =  \frac{1}{54}x_1^6 + \frac{1}{54}x_2^6 + \frac{1}{54}x_3^6 - \frac{1}{18} x_1^2x_2^2x_3^2 \\
	&= \frac{1}{108}\Big(\big(\frac{2\sqrt{3}}{3}(x_1^3 + x_2^3 + x_3^3)
			-\frac{\sqrt{3}}{3}(x_1^2x_2 + x_1^2x_3 + x_1x_2^2 + x_1x_3^2 + x_2^2x_3 + x_2x_3^2)\big)^2\\
	& + \big(\frac{\sqrt{6}}{6}(2x_1^3 - x_2^3 - x_3^3) 
			+ \frac{\sqrt{6}}{6}(2x_1^2x_2 + 2x_1^2x_3 - x_1x_2^2 - x_1x_3^2 - x_2^2x_3 - x_2x_3^2)\big)^2\\
	& + \big(\frac{\sqrt{2}}{2}(x_2^3 - x_3^3 )
			+ \frac{\sqrt{2}}{2}(x_1x_2^2 - x_1x_3^2 + x_2^2x_3 - x_2x_3^2)\big)^2\\
	& + \big(x_1^2x_2 - x_1^2x_3 - x_1x_2^2 + x_1x_3^2 + x_2^2x_3 - x_2x_3^2\big)^2\Big)
\end{align*}
where the first square comes from the rank one trivial block, the second and third squares from the two copies of the rank one standard block and the last square from the rank one alternating block.  Clearly, the first
and last squares are symmetric polynomials. Proposition \ref{proposition:invariant-isotypic-partial-sums} states that the sum of the second and third squares is also a symmetric polynomial. Although 
it is not immediately clear from the above representation, it is indeed so. We invite the reader to check. 
\end{ex}

Note that the proof of Proposition \ref{proposition:invariant-isotypic-partial-sums} can be applied to any matrix $Q$ in the symmetry adapted PSD cone, which leads to the following results.

\begin{cor}
Let $m(x)$ be a vector of polynomials comprising a fixed basis of $\R[x_1,\dots,x_n]_d$. Then every matrix $Q \in \PSD_N^G$, calculated using the representation matrices $D(g)$ written in this basis,  produces a $G$-invariant polynomial  $f(x) = m(x)^T Q m(x)$.
\end{cor}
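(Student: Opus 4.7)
The statement follows immediately by applying the proof of Proposition \ref{proposition:invariant-isotypic-partial-sums} to the given $Q$, because that proof nowhere used $G$-invariance of $f$ or a specific SOS decomposition as input; it only used that $Q$ sat in the block form prescribed by Corollary \ref{cor:characterize-psd-cone}, which is exactly what membership in $\PSD_N^G$ guarantees.

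My plan is therefore to repeat the structure of that proof. First I would pass to a symmetry adapted basis and invoke Corollary \ref{cor:characterize-psd-cone} to write $Q$ in block form with blocks $Q_i \in \PSD_{m_i}$. Then I would expand
\begin{equation*}
f(x) \;=\; m(x)^T Q\, m(x) \;=\; \sum_{i=1}^{s}\sum_{j=1}^{n_i} (v^i_j)^T Q_i\, (v^i_j) \;=\; \sum_{i=1}^{s}\bigl\langle Q_i,\,P_i(x)\bigr\rangle,
\end{equation*}
where $P_i(x)=\sum_{j=1}^{n_i} v^i_j (v^i_j)^T$ has entries $p^i_{k,\ell}$ as in \eqref{equation:Pi-entries}. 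The orthogonality computation inside the proof of Proposition \ref{proposition:invariant-isotypic-partial-sums} then shows that each $p^i_{k,\ell}$ is a $G$-invariant polynomial, so $f$, being a linear combination of invariant polynomials, is itself $G$-invariant.

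There is no substantive obstacle here; the only conceptual point is the observation above that nothing in the earlier proof forced $Q$ to have the specific rank $r_i$ on the $i$th block or to come from an a priori $G$-invariant $f$. If one wanted an even shorter argument, the identity $m(g^{-1}x) = D(g)\,m(x)$ is implicit in the paper's earlier derivation of the formula $g \cdot f(x) = m(x)^T D(g)^T Q D(g) m(x)$, so for any $g \in G$ one computes
\begin{equation*}
f(g^{-1}x) \;=\; m(g^{-1}x)^T Q\, m(g^{-1}x) \;=\; m(x)^T D(g)^T Q D(g)\, m(x) \;=\; m(x)^T Q\, m(x) \;=\; f(x),
\end{equation*}
where the third equality uses the defining relation of $\PSD_N^G$. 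Either route yields the corollary without further calculation; I would opt for the first presentation since it aligns with the hint preceding the corollary and highlights the isotypic structure of $f$.
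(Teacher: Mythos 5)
Your first route is exactly the paper's intent: the text preceding this corollary explicitly notes that the proof of Proposition \ref{proposition:invariant-isotypic-partial-sums} applies verbatim to any $Q \in \PSD_N^G$, and that is the entire argument given. The shorter alternative you sketch, computing $f(g^{-1}x) = m(x)^T D(g)^T Q\,D(g)\,m(x) = m(x)^T Q\,m(x)$ directly from the defining relation of $\PSD_N^G$ and the identity the paper established for $g\cdot f$, is also valid and is arguably the most economical proof, though the paper opts to reuse the isotypic computation.
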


\begin{cor}\label{corollary:Kf-nonempty}
The symmetry adapted Gram spectrahedron $K_f^G$ is non-empty if and only if the $G$-invariant polynomial $f$ is SOS.
\end{cor}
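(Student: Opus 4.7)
The plan is to prove both implications by combining results already established in the excerpt; neither direction requires new machinery, so the writeup will be short.

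For the forward implication, suppose $K_f^G$ is non-empty, so there exists $Q \in L_f \cap \PSD_N^G$. Since $\PSD_N^G \subseteq \PSD_N$ by definition, we have $Q \in L_f \cap \PSD_N = K_f$, and the non-emptiness of $K_f$ is equivalent to $f$ being SOS by the proposition immediately following Definition \ref{definition:gram-spectrahedron} (which in turn relies on Theorem \ref{thm:how-to-write-f}). One can alternatively invoke the preceding corollary: any $Q \in \PSD_N^G$ produces $m(x)^T Q m(x)$ as a $G$-invariant polynomial, and the PSD factorization $Q = B^T B$ gives the explicit SOS decomposition $f = \|B m(x)\|^2$.

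For the backward implication, suppose $f$ is SOS. By Theorem \ref{thm:how-to-write-f} there exists $Q' \in \PSD_N$ with $f(x) = m(x)^T Q' m(x)$, so $K_f$ is non-empty. Now apply the averaging construction from the proof of Proposition \ref{prop:existence-of-symmetry-adapted-Q}: set
\begin{equation*}
    Q := \frac{1}{|G|} \sum_{g \in G} D(g)^T Q' D(g).
\end{equation*}
The key observations are that (i) each summand $D(g)^T Q' D(g)$ is PSD, since congruence preserves positive semidefiniteness, so the convex combination $Q$ is PSD; (ii) $Q$ is $G$-invariant by construction, hence $Q \in \PSD_N^G$; and (iii) because $f$ is $G$-invariant, each congruence $D(g)^T Q' D(g)$ also represents $f$ via $m(x)$, so the average still lies in $L_f$. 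Combining these, $Q \in L_f \cap \PSD_N^G = K_f^G$, which is therefore non-empty.

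There is no real obstacle here — the statement is essentially a symmetry-adapted version of the proposition after Definition \ref{definition:gram-spectrahedron}, and the averaging trick in Proposition \ref{prop:existence-of-symmetry-adapted-Q} already does all the work. The only subtle point worth spelling out explicitly is that the averaging procedure preserves both membership in $L_f$ (using $G$-invariance of $f$) and positive semidefiniteness (using convexity of $\PSD_N$ together with its closure under the congruence maps $Q' \mapsto D(g)^T Q' D(g)$); these two facts together are what make the proof go through in one line each.
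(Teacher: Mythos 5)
Your proof is correct and follows essentially the same route the paper intends: the forward direction is the containment $\PSD_N^G \subseteq \PSD_N$, and the backward direction is the averaging construction from Proposition \ref{prop:existence-of-symmetry-adapted-Q} applied to a PSD Gram matrix from Theorem \ref{thm:how-to-write-f}, noting that congruence and convex combination preserve positive semidefiniteness and that $G$-invariance of $f$ keeps each summand in $L_f$. The paper leaves this corollary implicit, so your writeup supplies exactly the missing one-line justification.
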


\section{Properties of symmetry adapted PSD cones and Gram spectrahedra} \label{section:properties-of-symm-adapted-PSD-cone}

In this section we provide general results about $\PSD_N^G$ and $K_f^G$. We compute the dimension of $\PSD_N^G$ and give a characterization of its extreme rays, as well as describe the matrix block of $Q \in \PSD_N^{S_n}$ in a symmetry adapted basis corresponding to the trivial representation when $G$ is the symmetric group.

\begin{cor}\label{dimcor}
The dimension of $\PSD_N^G$ is $\sum_{i=1}^s \binom{m_i + 1}{2}$.
\end{cor}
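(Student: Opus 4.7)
The plan is to leverage Corollary \ref{cor:characterize-psd-cone} directly: it already tells us that every $Q \in \PSD_N^G$ is, in a symmetry adapted basis, uniquely determined by a tuple of matrices $(Q_1, \ldots, Q_s)$ with $Q_i \in \PSD_{m_i}$. So the dimension calculation reduces to counting the free parameters in such a tuple, and then verifying that these parameters really do sweep out a full-dimensional region of the ambient linear space of fixed points.

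First I would fix a symmetry adapted basis, so that the linear subspace
\[
W := \{ Q \in \Sym^N : D(g)^T Q D(g) = Q \text{ for all } g \in G\}
\]
is identified, via the parametrization in \eqref{eq:nice-form}, with $\Sym^{m_1} \oplus \cdots \oplus \Sym^{m_s}$. The map sending $(Q_1,\ldots,Q_s)$ to the block-repeated matrix in \eqref{eq:nice-form} is linear and injective (one can read off each $Q_i$ from any of its $n_i$ identical copies), so it is a linear isomorphism onto $W$. Its image is therefore a linear subspace of $\Sym^N$ of dimension $\sum_{i=1}^s \dim \Sym^{m_i} = \sum_{i=1}^s \binom{m_i+1}{2}$.

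Next I would observe that $\PSD_N^G = W \cap \PSD_N$, and that under the isomorphism above this corresponds exactly to the product of cones $\PSD_{m_1} \times \cdots \times \PSD_{m_s}$, because a block diagonal matrix with repeated diagonal block $Q_i$ is PSD if and only if each $Q_i$ is PSD. Since each $\PSD_{m_i}$ is full-dimensional in $\Sym^{m_i}$, the product is full-dimensional in $\Sym^{m_1} \oplus \cdots \oplus \Sym^{m_s}$, and hence $\PSD_N^G$ is full-dimensional inside the linear subspace $W$. Consequently
\[
\dim \PSD_N^G = \dim W = \sum_{i=1}^s \binom{m_i+1}{2}.
\]

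There isn't really a hard step here; the nontrivial content has already been absorbed into Corollary \ref{cor:characterize-psd-cone}. The only thing to be mildly careful about is the distinction between the dimension of the linear span of $\PSD_N^G$ and the dimension of $\PSD_N^G$ as a semialgebraic set — but these coincide precisely because $\PSD_{m_i}$ has nonempty interior in $\Sym^{m_i}$ (e.g.\ the identity matrix is an interior point), which in turn gives an interior point of $\PSD_N^G$ inside $W$.
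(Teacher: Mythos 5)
Your proposal is correct and follows exactly the same route as the paper: both reduce the claim to Corollary \ref{cor:characterize-psd-cone} plus the fact that $\dim \PSD_{m_i} = \binom{m_i+1}{2}$. You merely spell out in more detail the linear isomorphism with $\Sym^{m_1}\oplus\cdots\oplus\Sym^{m_s}$ and the full-dimensionality of each $\PSD_{m_i}$, which the paper leaves implicit.
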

\begin{proof}
Since the dimension of $\PSD_{m_i}$ is $\binom{m_i + 1}{2}$ Corollary
\ref{cor:characterize-psd-cone} implies the result.
\end{proof}

\subsection{Extremal Rays}\label{subsection:extreme-rays}

Every point $Q \in PSD_N^G$ gives rise to a \textit{ray}, as in
\begin{equation*}
    \text{ray}(Q) := \left\{ c Q : c \in \mathbb{R}_{\geq 0} \right\}.
\end{equation*}
A ray $r$ is \textit{extremal} if it cannot be written as a non-trivial convex combination of other rays. We note that in the case
of the usual cone of positive semidefinite matrices $\PSD_N$,
the Spectral Theorem for symmetric matrices implies that 
the extremal rays correspond to matrices of rank one. 
A \textit{face} $F$ of a convex set $K$ is a convex subset such that if a convex combination of two points of $K$ lies in $F$, then the points were already elements of $F$. In symbols, if $a,b \in K$ and $ta +(1-t)b \in F$ for some $t \in (0,1)$ then $a,b \in F$. 
Given a spectrahedron $K$, any matrix $Q \in K$ belongs to the relative interior of a unique face denoted by $F_K(Q)$.  The face $F_K(Q)$ is the intersection of $K$ with the subspace of all matrices whose kernel contains the kernel of $Q$; see \cite{GR1995}. 

\begin{theorem} \cite[Theorem 1]{GR1995} \label{UnqFaceThm}
Let $K \subset \PSD_k$ be a spectrahedron, and for $Q \in K$ define
$$ S(Q) = \{X \in \Sym^k \, : \, \ker(Q) \subset  \ker(X) \}.$$
Then $F_K(Q) = S(Q) \cap K$.
\end{theorem}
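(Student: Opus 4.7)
The plan is to establish two things: first that $S(Q) \cap K$ is itself a face of $K$ containing $Q$, and second that $Q$ lies in its relative interior, from which the claim follows by the standard fact that every point of a convex set belongs to a unique face in whose relative interior it lies.

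First I would show $F := S(Q) \cap K$ is a face. Suppose $A,B \in K$ and $tA + (1-t)B \in F$ for some $t \in (0,1)$. For any $v \in \ker(Q)$ we have $v^T(tA + (1-t)B)v = 0$, and since $A,B \succeq 0$ the summands $t v^T A v$ and $(1-t)v^T B v$ are both nonnegative; hence each vanishes. Positive semidefiniteness then forces $Av = 0$ and $Bv = 0$, so $\ker(Q) \subseteq \ker(A) \cap \ker(B)$, giving $A,B \in F$. Clearly $Q \in F$ as well.

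Next, to see that $Q$ lies in the relative interior of $F$, I would use the standard perturbation trick: for any $X \in F$, set $Y := (1+\varepsilon)Q - \varepsilon X$ for some $\varepsilon > 0$. Because $X$ and $Q$ both vanish on $\ker(Q)$, so does $Y$, giving $\ker(Q) \subseteq \ker(Y)$. Restricting to the orthogonal complement $\ker(Q)^\perp$, the matrix $Q$ is positive definite while $X$ is only positive semidefinite; hence for all sufficiently small $\varepsilon > 0$ the matrix $(1+\varepsilon)Q - \varepsilon X$ remains positive definite on $\ker(Q)^\perp$, and thus PSD on all of $\R^k$. Since $Y$ is an affine combination of $Q, X \in L$, it lies in the same affine space, so $Y \in F$. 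One then checks that
\begin{equation*}
Q = \frac{1}{1+\varepsilon}\, Y + \frac{\varepsilon}{1+\varepsilon}\, X,
\end{equation*}
exhibiting $Q$ as a strict convex combination of two distinct points of $F$ along the line through $X$. Since $X \in F$ was arbitrary, $Q$ is in the relative interior of $F$.

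Finally, by the standard decomposition of a convex set into the disjoint union of the relative interiors of its faces, $F_K(Q)$ is the unique face in whose relative interior $Q$ lies; combining this with the two steps above yields $F_K(Q) = S(Q) \cap K$. I expect the main subtlety to be the perturbation step: one needs the fact that the kernel condition $\ker(Q) \subseteq \ker(X)$ is exactly what permits $Q$ to be pushed past $X$ while remaining PSD, since only directions orthogonal to $\ker(Q)$ matter and $Q$ is strictly positive there. Everything else is a routine convexity argument.
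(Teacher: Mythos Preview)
The paper does not supply its own proof of this theorem; it is quoted verbatim from \cite[Theorem 1]{GR1995} and used as a black box. Your argument is the standard one and is correct: the face property of $S(Q)\cap K$ follows from the vanishing of quadratic forms forcing kernel containment for PSD matrices, and the perturbation $(1+\varepsilon)Q-\varepsilon X$ together with Rockafellar's characterization of relative interior (for every $X$ in the convex set one can push slightly past $Q$ away from $X$) places $Q$ in the relative interior.
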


\begin{cor}
Let $K = L \cap PSD_k$ be a spectrahedral cone for some linear subspace $L \subset \Sym^k$, and let $Q\in K$.
Then $Q$ is extremal if and only if the dimension of the affine hull
of $F_K(Q)$ is one.
\end{cor}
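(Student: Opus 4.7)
The plan is to use the previous theorem and corollary (Theorem \ref{UnqFaceThm} and its face-characterization consequence) to identify extremality of $\text{ray}(Q)$ with the statement that the face $F_K(Q)$ equals the ray itself. Throughout I will assume $Q\neq 0$ (the zero matrix is never on an extremal ray in the sense of the excerpt). I would first observe that $F_K(Q)$ is itself a convex cone, since it is the intersection of the linear subspace $S(Q)$ with the cone $K$. In particular $\text{ray}(Q)\subseteq F_K(Q)$: every $cQ$ with $c\ge 0$ has $\ker(cQ)\supseteq\ker(Q)$ and lies in $K$. Consequently $\dim\operatorname{aff}(F_K(Q))\ge 1$ always, and the question is whether equality holds.

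For the ``$\Leftarrow$'' direction, suppose $\dim\operatorname{aff}(F_K(Q))=1$. Because $F_K(Q)$ is a cone containing $0$ and $Q$, its affine hull must be the line $\mathbb{R}\cdot Q$. Since $F_K(Q)\subseteq K\subseteq \PSD_k$ contains only positive semidefinite matrices, this forces $F_K(Q)=\text{ray}(Q)$. Now if $Q=A+B$ with $A,B\in K$, then $Q=\tfrac12(2A)+\tfrac12(2B)$ exhibits $Q$ as a relative-interior convex combination of the points $2A,2B\in K$; by the defining property of a face, both $2A,2B\in F_K(Q)$, hence $A,B\in\text{ray}(Q)$. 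Thus $\text{ray}(Q)$ is extremal.

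For the ``$\Rightarrow$'' direction I would argue by contrapositive. Assume $\dim\operatorname{aff}(F_K(Q))\ge 2$, so there exists $X\in F_K(Q)$ with $X\notin\mathbb{R}\cdot Q$. Since $Q$ lies in the relative interior of $F_K(Q)$, for some sufficiently small $\epsilon>0$ the matrix
\[
Y \; := \; Q+\epsilon(Q-X) \; = \; (1+\epsilon)Q-\epsilon X
\]
belongs to $F_K(Q)\subseteq K$. Rearranging yields $(1+\epsilon)Q=\epsilon X+Y$, so
\[
Q \; = \; \tfrac{\epsilon}{1+\epsilon}X+\tfrac{1}{1+\epsilon}Y,
\]
expressing $Q$ as a sum of two elements of the cone $K$, neither of which is a nonnegative multiple of $Q$ (since $X$ is not). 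This shows $\text{ray}(Q)$ is not extremal.

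The only subtle point, and the place I would be most careful, is invoking the relative interior property correctly when $F_K(Q)$ need not be full-dimensional in $\Sym^k$: one must choose the perturbation $Q+\epsilon(Q-X)$ inside the affine hull of $F_K(Q)$ so that the standard fact ``relative interior points can be extended slightly past any other point of the set'' applies, which is fine because $X$ and $Q$ already lie in $\operatorname{aff}(F_K(Q))$. Everything else is bookkeeping with the cone/face axioms.
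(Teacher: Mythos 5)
The paper states this corollary without proof, presenting it as an immediate consequence of the preceding material. Your argument is correct and complete, and it is essentially the standard proof that a nonzero point of a closed convex cone generates an extreme ray if and only if the (unique) face containing it in its relative interior is one-dimensional. Two small observations worth making. First, your proof does not actually invoke Theorem~\ref{UnqFaceThm} at all: it uses only the general facts that $F_K(Q)$ is a face containing $Q$ in its relative interior, that $F_K(Q)$ is a subcone (which follows because $S(Q)$ is a linear subspace), and that $0\in K$ so that the affine hull is a linear span. The kernel characterization $F_K(Q)=S(Q)\cap K$ from Theorem~\ref{UnqFaceThm} is what the paper actually uses later, in the proof of Theorem~\ref{theorem:ranks-and-extreme-rays}, to compute $F_{\PSD_N^G}(Q)$ explicitly; for the present corollary the abstract face properties suffice, so your proof is, if anything, slightly more general than the cited framework requires. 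Second, the paper's working definition of extremality (``cannot be written as a non-trivial convex combination of other rays'') is phrased in terms of rays rather than the additive decomposition $Q=A+B$ you use, but for a closed convex cone with apex $0$ these are the same notion, and your midpoint-of-$2A$-and-$2B$ trick is the clean way to pass between them. The one place to be slightly pedantic is in the forward direction: after producing $Y=(1+\epsilon)Q-\epsilon X\in F_K(Q)$, you should note (as you implicitly do) that $Y$ cannot be a nonnegative multiple of $Q$, since otherwise $X$ would be too; with that observed, the contrapositive is airtight.
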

This leads to the following theorem, further specialized to our case: 

\begin{theorem}\label{theorem:ranks-and-extreme-rays}
Let $Q_1,\dots,Q_s$ be the symmetric matrices appearing in the blocks as in Corollary~\ref{cor:characterize-psd-cone}. Then the extremal rays of $\PSD_N^G$ are in bijection with the set of matrices $Q \in \PSD_N^G$ such that exactly one matrix $Q_i$ has rank one, and the other $Q_j$, $j \neq i$ have rank zero, considered up to scaling by $\mathbb{R}_{\geq 0}$.
\end{theorem}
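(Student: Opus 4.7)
The strategy is to use the block-diagonal description from Corollary~\ref{cor:characterize-psd-cone} to identify $\PSD_N^G$ with a direct product of standard PSD cones, and then invoke the preceding corollary (which characterizes extremal rays as those $Q$ whose face $F_K(Q)$ has one-dimensional affine hull). Extreme rays of a product of convex cones are exactly the extreme rays of one factor placed with zeros in the remaining coordinates, so the proof will reduce to the well-known fact that the extreme rays of $\PSD_{m_i}$ are the rank-one matrices.

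First I would make the identification precise. The map
$$\Phi : \PSD_N^G \longrightarrow \PSD_{m_1} \times \cdots \times \PSD_{m_s}, \qquad Q \longmapsto (Q_1, \ldots, Q_s),$$
which reads off the summands $Q_i$ from (\ref{eq:nice-form}), is a linear isomorphism of convex cones: injectivity and linearity are immediate from the block form, and every tuple $(Q_1,\ldots,Q_s)$ with $Q_i \in \PSD_{m_i}$ lifts to a unique $Q \in \PSD_N^G$ by inflating each $Q_i$ into $n_i$ copies along the diagonal of $\tilde{Q}_i$.

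Next I would analyze the face $F_K(Q)$ via Theorem~\ref{UnqFaceThm}. The key observation is that in the symmetry-adapted basis
$$\ker Q \;=\; \bigoplus_{i=1}^{s} (\ker Q_i)^{\oplus n_i},$$
so for any $X \in \PSD_N^G$ corresponding under $\Phi$ to $(X_1,\ldots,X_s)$, the inclusion $\ker Q \subset \ker X$ is equivalent to $\ker Q_i \subset \ker X_i$ for every $i$. Applying Theorem~\ref{UnqFaceThm} coordinate-wise gives
$$\Phi\bigl(F_{\PSD_N^G}(Q)\bigr) \;=\; F_{\PSD_{m_1}}(Q_1) \,\times\, \cdots \,\times\, F_{\PSD_{m_s}}(Q_s),$$
so the affine-hull dimensions add: $\dim \mathrm{aff}\, F_{\PSD_N^G}(Q) = \sum_i \dim \mathrm{aff}\, F_{\PSD_{m_i}}(Q_i)$. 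By the corollary before the theorem, $Q$ lies on an extremal ray iff this sum equals $1$. Since each summand is a nonnegative integer, exactly one summand must equal $1$ and the remaining summands must vanish. The vanishing condition $\dim \mathrm{aff}\, F_{\PSD_{m_j}}(Q_j) = 0$ forces $Q_j = 0$, and the condition $\dim \mathrm{aff}\, F_{\PSD_{m_i}}(Q_i) = 1$ is, by the Spectral Theorem applied to $\PSD_{m_i}$, equivalent to $\mathrm{rank}(Q_i) = 1$. This gives the bijection in the statement.

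The main bookkeeping obstacle is keeping straight the distinction between the small block $Q_i \in \PSD_{m_i}$ and its inflated copy $\tilde{Q}_i$ with $n_i$ identical diagonal blocks, so that the kernel decomposition and the product-of-faces identity are both derived in the correct basis; beyond that, everything reduces to standard facts about extreme rays of product cones and of the ordinary PSD cone.
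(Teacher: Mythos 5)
Your proof is correct. It relies on the same two tools the paper uses---the block structure from Corollary~\ref{cor:characterize-psd-cone} and the face characterization via Theorem~\ref{UnqFaceThm}---but routes them differently. The paper fixes a $Q$ with a single rank-one block $Q_i$, writes out a basis for $\ker Q_i$ by hand to show that $F_{\PSD_N^G}(Q)$ is exactly the ray through $Q$, and then dispatches the converse direction with the unproved assertion that ``any other type of matrix in $\PSD_N^G$ is easily seen to be a conical combination of the above matrices.'' You instead make the linear isomorphism $\Phi : \PSD_N^G \to \PSD_{m_1} \times \cdots \times \PSD_{m_s}$ explicit, use the kernel decomposition to show that $\Phi$ carries faces to products of faces, and then count affine-hull dimensions: since dimensions are additive over products, the extremality condition $\dim\operatorname{aff} F_{\PSD_N^G}(Q) = 1$ forces exactly one factor to contribute dimension $1$ (equivalently $\operatorname{rank} Q_i = 1$, by the spectral theorem for $\PSD_{m_i}$) and all others to contribute dimension $0$ (equivalently $Q_j = 0$). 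This is tidier in two ways: it handles both directions of the bijection uniformly rather than leaving one as an assertion, and it replaces the explicit rank-one kernel computation with an appeal to the standard description of the extreme rays of an ordinary PSD cone. The one bookkeeping point worth being careful about---which you flag and handle correctly---is that the $n_i$-fold repetition of $Q_i$ inside $\tilde{Q}_i$ does not alter the kernel-containment condition, so the face of $\PSD_N^G$ really does correspond to the product of the small faces.
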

\begin{proof}
Let $Q \in \PSD_N^G$ such that one $Q_i$ has rank one and the others
are zero matrices. The existence of such $Q$ follows from Corollary \ref{cor:nice-diagonal}. 
Without loss of generality we can assume that the $(1,1)$
entry of $Q_i$ is nonzero. We denote this entry by $a$.
Since the columns of $Q_i$ are multiples of the first column
and the rows are multiples of the first row we get
$$Q_i = 
\begin{pmatrix}
a & s_2a & \cdots & s_{m_i}a\\
s_2a & s_2^2a & \cdots & s_2s_{m_i}a\\
\vdots & \vdots & \ddots & \vdots\\
s_{m_i}a & s_{m_i}s_2a & \cdots & s_{m_i}^2a\\
\end{pmatrix}.$$
A basis for $\ker(Q_i)$ is
$$\begin{pmatrix}
-s_2 \\ 1 \\ 0 \\ \vdots \\ 0
\end{pmatrix},
\begin{pmatrix}
-s_3 \\ 0 \\ 1 \\ \vdots \\ 0
\end{pmatrix}, \cdots,
\begin{pmatrix}
-s_{m_i} \\ 0 \\ 0 \\ \vdots \\ 1
\end{pmatrix}.$$
The only symmetric matrices whose kernel contains $\ker(Q_i)$ are scalar multiples of
$$\begin{pmatrix}
1 & s_2 & \cdots & s_{m_i}\\
s_2 & s_2^2 & \cdots & s_2s_{m_i}\\
\vdots & \vdots & \ddots & \vdots\\
s_{m_i} & s_{m_i}s_2 & \cdots & s_{m_i}^2\\
\end{pmatrix}.$$
This also shows that the only symmetric matrices whose kernel
contains $\ker(Q)$ have the same block structure as $Q$ where 
$\tilde{Q}_j = 0$ when $j \neq i$, and in $\tilde{Q_i}$ each 
block is a (possibly different) multiple of $Q_i$. 
But then by Theorem \ref{UnqFaceThm} $S(Q) \cap \PSD_N^G = F_{\PSD_N^G}(Q)$, and this consists of positive multiples of $Q$.  
Therefore the ray generated by $Q$ is an extremal ray.
Any other type of matrix in $\PSD_N^G$ is easily seen to be 
a conical combination of the above matrices. This proves the theorem.
\end{proof}

\begin{cor}
The ranks of extremal rays of $\PSD_N^G$ are precisely $\{n_1, \ldots, n_s\}$, $n_i = \dim V_i$. In particular, the minimum rank attained by extremal matrices
is $\min(n_1, \ldots, n_s)$, and if no one-dimensional representation
of $G$ appears in $V$ with positive multiplicity, this minimum rank is
bigger than one.
\end{cor}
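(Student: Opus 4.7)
The plan is to read the rank directly off the block structure in Corollary \ref{cor:characterize-psd-cone}, using the characterization of extremal rays in Theorem \ref{theorem:ranks-and-extreme-rays}. Since that theorem says every extremal ray is represented (up to positive scaling) by a matrix $Q \in \PSD_N^G$ for which exactly one of the blocks $Q_i$ in (\ref{eq:nice-form}) has rank one and all other $Q_j$ vanish, computing $\mathrm{rank}(Q)$ reduces to computing $\mathrm{rank}(\tilde{Q}_i)$ for that single distinguished index.

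The key observation is then purely linear algebraic: by Corollary \ref{cor:characterize-psd-cone}, the block $\tilde{Q}_i$ is block-diagonal with $n_i$ identical copies of $Q_i$ along the diagonal. Hence $\mathrm{rank}(\tilde{Q}_i) = n_i \cdot \mathrm{rank}(Q_i) = n_i \cdot 1 = n_i$, and the remaining blocks $\tilde{Q}_j$ ($j \neq i$) contribute zero. Therefore $\mathrm{rank}(Q) = n_i$, and as $i$ ranges over $\{1,\ldots,s\}$ we obtain the set $\{n_1, \ldots, n_s\}$ of achievable extremal ranks. Conversely, for each $i$ such an extremal matrix exists: take any nonzero rank-one PSD matrix as $Q_i$ (e.g.\ $e_1 e_1^T$) and set $Q_j = 0$ for $j \neq i$; by Corollary \ref{cor:characterize-psd-cone} this yields a bona fide element of $\PSD_N^G$, and by Theorem \ref{theorem:ranks-and-extreme-rays} its ray is extremal.

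The second sentence is immediate from the first: the minimum achievable extremal rank is $\min(n_1, \ldots, n_s)$. For the final clause, note that $n_i = \dim V_i = 1$ precisely when $V_i$ is a one-dimensional irreducible representation of $G$; so if no such representation appears with positive multiplicity in the isotypic decomposition of $V$, then every $n_i \geq 2$ and consequently $\min(n_1, \ldots, n_s) \geq 2$.

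Honestly, there is no serious obstacle here; the corollary is essentially a bookkeeping consequence of the two previously established structural results. The only point worth stating carefully is the existence part, to make sure each value $n_i$ is actually attained rather than merely being an upper bound on the rank.
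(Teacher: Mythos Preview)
Your proof is correct and follows exactly the approach the paper intends: the corollary is stated without proof immediately after Theorem~\ref{theorem:ranks-and-extreme-rays}, as a direct consequence of that characterization together with the block structure of Corollary~\ref{cor:characterize-psd-cone}. Your explicit computation $\mathrm{rank}(\tilde{Q}_i) = n_i \cdot \mathrm{rank}(Q_i)$ and the existence remark simply spell out what the paper leaves implicit.
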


Note that this differs from $\PSD_N$, whose extremal rays are defined by rank one matrices. We continue with Example \ref{ex:icosahedral}.
\begin{continue}{ex:icosahedral}
Consider again the group $G=I_h$ of $120$ symmetries of the icosahedron. The space of degree 3 polynomials has dimension $10$, and can help us write the degree $6$ icosahedral invariants as sums of squares. Using the Mulliken symbols for irreducible representations of $I_h$ typical in chemistry \cite[last page]{Cotton1990ChemicalApplicationsOfGroupTheory}, we have that
\begin{equation*}
    \begin{array}{ccc}
        \text{vector spaces} & \mathbb{C}[x_1,x_2,x_3]_3 & = T_{1u} \oplus T_{2u} \oplus G_u\\
        \text{dimensions} & 10 & = 3 + 3 + 4.
    \end{array}
\end{equation*}
Since the minimum dimension of an irreducible in this decomposition is $3$, we can already conclude that the extremal rays of $\PSD_{10}^{I_h}$ will not be given by matrices of rank 1. The extremal rays correspond to matrices of rank at least $3$.

Similarly, since the degree $5$ polynomials decompose as
\begin{equation*}
    \begin{array}{ccc}
        \text{vector spaces} & \mathbb{C}[x_1,x_2,x_3]_5 & = 2T_{1u} \oplus 2T_{2u} \oplus G_u \oplus H_u \\
        \text{dimensions} & 21 & = 2(3) + 2(3) + 4 + 5,
    \end{array}
\end{equation*}
we know that the extremal rays are defined by matrices of rank exactly $3,4$, and $5$ in $\PSD_{21}^{I_h}$.
\end{continue}

\subsection{Trivial Block}

Here we turn to $G=S_n$ acting on $V = \C[x_1, \ldots, x_n]_d$ by permuting the indices of the indeterminates. For all $n$ and $d$, the 
trivial representation appears in $V$ with multiplicity equal to 
$p=p(n,d)$ where $p(n,d)$ is the number of partitions of $d$ with at most $n$ parts via Theorem \ref{theorem:multiplicity-as-integer-solutions}.
Therefore, in a symmetry adapted basis, there is one $p \times p$ 
diagonal block corresponding to the trivial representation, called the trivial block.

We now use Algorithm \ref{alg:symmetry-adapted-basis} to build the trivial block for any $n$ and $d$ 
in the case of $S_n$. 
Note that we may always use degree $d$ monomials in $n$ variables as a basis for $V$ when $G=S_n$.
To start, we order our monomial basis so that orbits of $G = S_n$ acting on the finite set of monomials  are grouped together. For example, for degree 3 monomials in 3 variables, we could order our basis as 
$$\{x_1^3, x_2^3, x_3^3, x_1^2x_2, x_1^2x_3, x_1x_2^2, x_1x_3^2, x_2^2x_3, x_2x_3^2, x_1x_2x_3 \}$$
which has three orbits $Gv$ for $v \in \{x_1^3, x_1^2 x_2, x_1 x_2 x_3 \}$. 
Note that in general the orbits can be labeled by partitions of $d$ with $ \leq n$ parts.
Under this ordering, a general symmetric matrix will be described by the blocks indexed by the orbits of our monomials
$$Q =
\begin{blockarray}{ccccc}
& O(x^{\lambda^{(1)}}) & O(x^{\lambda^{(2)}}) & \cdots & O(x^{\lambda^{(p)}})   \\
\begin{block}{c(cccc)}
O(x^{\lambda^{(1)}})  &  & & &  \\ 
 O(x^{\lambda^{(2)}})  &  & & &  \\ 
 \vdots & & & & \\
 O(x^{\lambda^{(p)}})  &  & & &  \\ 
\end{block}
\end{blockarray}.$$
\begin{prop} \label{prop:trivial-block}
Let $Q \in \PSD_N^{S_n}$ be an $N \times N$ symmetric matrix represented
in the monomial basis ordered with respect to the orbits $O(x^{\lambda^{(1)}}), O(x^{\lambda^{(2)}}), \ldots, O(x^{\lambda^{(p)}})$. Let	$\Lambda_{i,j}$ be the  submatrix  of $Q$ indexed by $O(x^{\lambda^{(i)}})$ and $O(x^{\lambda^{(j)}})$ on the rows and columns, respectively. Let $s_i = \sqrt{|O(x^{\lambda^{(i)}})|}$. Then there exists an orthogonal change of basis
matrix $T$ such that the trivial block of $T^TQT$ is
$$Q^{\ytableausetup
{mathmode, boxsize=.4em}
\begin{ytableau}
*(white) & *(white) &  \none[\scriptsize\cdot] &  \none[\scriptsize\cdot] &  \none[\scriptsize\cdot] & *(white) \\
\end{ytableau}} =  
\begin{pmatrix}
\frac{s_1^2}{s_1^2}\text{colsum}(\Lambda_{1,1}) & \frac{s_2^2}{s_1s_2}\text{colsum}(\Lambda_{1,2}) & \cdots & \frac{s_p^2}{s_1s_p}\text{colsum}(\Lambda_{1,p})\\
\frac{s_1^2}{s_1s_2}\text{rowsum}(\Lambda_{1,2}) & \frac{s_2^2}{s_2^2}\text{colsum}(\Lambda_{2,2}) & \cdots & \frac{s_p^2}{s_2s_p}\text{colsum}(\Lambda_{2,p})\\
\vdots & \vdots & \ddots & \vdots \\
\frac{s_1^2}{s_1s_p}\text{rowsum}(\Lambda_{1,p}) & \frac{s_2^2}{s_2s_p}\text{rowsum}(\Lambda_{2,p}) & \cdots & \frac{s_p^2}{s_p^2}\text{colsum}(\Lambda_{p,p})\\
\end{pmatrix}$$
where $\text{colsum}(\Lambda_{i,j})$ is the sum of the entries of any column of $\Lambda_{i,j}$ and $\text{rowsum}(\Lambda_{i,j})$ is the sum of the entries of any row of $\Lambda_{i,j}$.  
\end{prop}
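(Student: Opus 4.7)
The plan is to realize the change-of-basis matrix $T$ explicitly on the trivial isotypic component via Algorithm \ref{alg:symmetry-adapted-basis}, then compute each entry of the resulting trivial block of $T^T Q T$ and use the $S_n$-invariance of $Q$ to collapse it to a single row or column sum of $\Lambda_{i,j}$.

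I would first specialize Algorithm \ref{alg:symmetry-adapted-basis} to the trivial representation of $S_n$, for which $n_{\text{triv}}=1$ and $d^{\text{triv}}(g)=1$ for every $g$. The projection reduces to $\pi^{\text{triv}} = \sum_{g\in S_n} D(g)$, and because $D(g)$ acts on monomials by a permutation matrix, applying $\pi^{\text{triv}}$ to any monomial $x^\alpha$ in the orbit $O(x^{\lambda^{(i)}})$ returns a positive scalar multiple of the orbit sum $w_i := \sum_{x^\beta \in O(x^{\lambda^{(i)}})} x^\beta$. The vectors $w_1,\ldots,w_p$ are linearly independent since distinct orbits consist of disjoint monomials, and together they span a $p$-dimensional $S_n$-invariant subspace, which must therefore equal the trivial isotypic component. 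Since the monomials are orthonormal under the standard (already $S_n$-invariant) inner product, $\|w_i\|=s_i$, so $e_i := w_i/s_i$ provides an orthonormal basis for that component. Extending $\{e_1,\ldots,e_p\}$ to a full symmetry-adapted orthonormal basis of $V$ yields the orthogonal change-of-basis matrix $T$.

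Next, I would compute the trivial block of $T^T Q T$. Its $(i,j)$ entry is
\[
e_i^T Q e_j \;=\; \frac{1}{s_i s_j} \sum_{\alpha \in O(x^{\lambda^{(i)}})} \sum_{\beta \in O(x^{\lambda^{(j)}})} Q_{\alpha,\beta} \;=\; \frac{\Sigma_{ij}}{s_i s_j},
\]
where $\Sigma_{ij}$ is the sum of all entries of $\Lambda_{i,j}$. To rewrite $\Sigma_{ij}$ as a single row or column sum, I would invoke the $S_n$-invariance $D(\sigma)^T Q D(\sigma) = Q$, which, under the monomial-permutation action, reads $Q_{\sigma\alpha,\sigma\beta} = Q_{\alpha,\beta}$ for every $\sigma \in S_n$. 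Since $S_n$ acts transitively on each orbit, all rows of $\Lambda_{i,j}$ then share a common row sum $r_{i,j}$, and all columns a common column sum $c_{i,j}$, so that $\Sigma_{ij} = s_i^2 r_{i,j} = s_j^2 c_{i,j}$.

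Substituting $\Sigma_{ij} = s_j^2\,\text{colsum}(\Lambda_{i,j})$ for $i\leq j$ yields the on- and above-diagonal entries $\frac{s_j^2}{s_i s_j}\,\text{colsum}(\Lambda_{i,j})$ of the claimed matrix; for $i>j$, the same derivation applied to $\Lambda_{j,i}$, together with $\Sigma_{ij}=\Sigma_{ji}=s_j^2\,\text{rowsum}(\Lambda_{j,i})$, reproduces the sub-diagonal entries $\frac{s_j^2}{s_i s_j}\,\text{rowsum}(\Lambda_{j,i})$. I do not anticipate a real obstacle: the derivation is a brief calculation, and the only item requiring care is the bookkeeping of row/column indices of $\Lambda_{i,j}$, which is handled uniformly by the identity $s_i^2 r_{i,j}=s_j^2 c_{i,j}$ that reconciles the two presentations with the symmetry of the block.
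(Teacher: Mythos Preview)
Your proposal is correct and follows essentially the same approach as the paper: specialize Algorithm~\ref{alg:symmetry-adapted-basis} to the trivial representation to produce the normalized orbit-sum vectors $e_i=w_i/s_i$ as the first $p$ columns of $T$, then read off the trivial block of $T^TQT$. The paper stops at ``the trivial block has the stated form,'' whereas you spell out the entrywise computation and use the invariance relation $Q_{\sigma\alpha,\sigma\beta}=Q_{\alpha,\beta}$ to justify that all column sums (resp.\ row sums) of each $\Lambda_{i,j}$ agree; this extra detail is welcome and not a different method.
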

\begin{proof}
We follow Algorithm \ref{alg:symmetry-adapted-basis}. Since $n_1 = 1$,
only the very first step needs to be executed. Moreover, $d^1(g) = [1]$
for all $g \in S_n$, and $D(g)$ are block diagonal in the basis 
given by the orbits for each $g$. Hence $\pi^1$ is block diagonal with $p$ blocks of size  $s_i^2 \times s_i^2, i=1, \ldots p$, along the diagonal. 
It is not hard to see that block $i$  is a multiple of the 
$s_i^2 \times s_i^2$ matrix with every entry equal to one. 
Therefore the first $p$ columns of $T$ are
$$T = 
\begin{blockarray}{ccccccc}
\begin{block}{c(cccccc)}
&  &  &  &   \\
 O(x^{\lambda^{(1)}})  & \overline{1/s_1} & \overline{0} & \cdots & \overline{0} & &\\ 
 O(x^{\lambda^{(2)}})  & \overline{0} & \overline{1/s_2} &  \cdots  & \overline{0} & & \\ 
 & \vdots & \vdots & \ddots & \vdots & \cdots & \cdots\\
 O(x^{\lambda^{(p)}})  & \overline{0} & \overline{0} &  \cdots  & \overline{1/s_p} & & \\
\end{block}
\end{blockarray}$$
with the bar indicating a column vector. Now in $T^TQT$
the trivial block has the stated form. 
\end{proof}

\section{Binary and Quadratic Symmetric Polynomials}
\label{section:binary-quadratic}

In this section we first  fix the number of variables $n = 2$ and consider the structure of the symmetry adapted $\PSD_N^G$ cone. In this case, the matrices have size $N=d+1$. We choose the monomial basis  $\{x^d, x^{d-1}y, \ldots, xy^{d-1}, y^d\}$,
and the symmetric matrices will be $Q = (q_{ij})$.
Moreover, we restrict to matrices $Q$ such that $Q D(\sigma) = D(\sigma) Q$ where $\sigma = (1 \, 2)$.

\begin{cor} \label{cor:binary-dim}
When $n=2$ the dimension of the symmetry adapted PSD cone is
$$\dim \PSD_{d+1}^{S_2} = \begin{cases}
\frac{(d+1)(d+3)}{4} & d \text{ is odd} \\
\frac{(d+2)^2}{4} & d \text{ is even} 
\end{cases}$$
\end{cor}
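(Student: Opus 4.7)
The plan is to reduce the claim to Corollary \ref{dimcor}, which states that the dimension of $\PSD_N^G$ equals $\sum_{i=1}^s \binom{m_i+1}{2}$, where $m_i$ are the multiplicities of the irreducible $S_2$-representations appearing in the isotypic decomposition of $V = \R[x,y]_d$. Since $S_2$ has only two irreducibles, both of dimension one, namely the trivial representation indexed by $\lambda = (2)$ and the sign representation indexed by $\lambda = (1,1)$, the entire computation reduces to finding just two multiplicities $m_{(2)}$ and $m_{(1,1)}$, and then evaluating a small binomial sum.

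To compute these multiplicities, I would apply Theorem \ref{theorem:multiplicity-as-integer-solutions}. For $\lambda = (2)$, we have $n(\lambda) = 0$ and hook lengths $(h_1,h_2) = (2,1)$, so $m_{(2)}$ equals the number of nonnegative integer solutions to $2y_1 + y_2 = d$, which is $\lfloor d/2 \rfloor + 1$. For $\lambda = (1,1)$, we have $n(\lambda) = 1$ and hook lengths $(h_1,h_2) = (2,1)$, so $m_{(1,1)}$ equals the number of nonnegative integer solutions to $2y_1 + y_2 = d - 1$, which is $\lfloor (d-1)/2 \rfloor + 1$. A sanity check: $m_{(2)} + m_{(1,1)} = d+1 = \dim V$, as it should be. Splitting into parities, for $d$ even one finds $m_{(2)} = d/2 + 1$ and $m_{(1,1)} = d/2$, while for $d$ odd one finds $m_{(2)} = m_{(1,1)} = (d+1)/2$.

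The final step is simply to plug these multiplicities into $\binom{m_{(2)}+1}{2} + \binom{m_{(1,1)}+1}{2}$ and simplify. In the even case, this gives $\tfrac{(d/2+2)(d/2+1)}{2} + \tfrac{(d/2+1)(d/2)}{2} = \tfrac{(d/2+1)(d+2)}{2} = \tfrac{(d+2)^2}{4}$. In the odd case, both multiplicities equal $(d+1)/2$, so the sum becomes $2 \cdot \tfrac{\tfrac{d+3}{2} \cdot \tfrac{d+1}{2}}{2} = \tfrac{(d+1)(d+3)}{4}$. This matches the two cases stated in the corollary.

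There is no real obstacle here; the main task is bookkeeping of hook lengths and parity. If I wanted to avoid invoking Theorem \ref{theorem:multiplicity-as-integer-solutions}, an alternative would be to observe directly that $m_{(2)}$ counts symmetric monomials up to the $S_2$-action in degree $d$, i.e., partitions of $d$ into at most $2$ parts, and derive $m_{(1,1)}$ via $m_{(2)} + m_{(1,1)} = d+1$; but the hook length approach is cleaner and generalizes to the later sections. The only minor care point is the parity split when turning $\lfloor d/2 \rfloor$ into a closed-form expression, which is exactly what produces the two cases in the statement.
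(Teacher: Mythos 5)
Your proof is correct and follows essentially the same route as the paper: apply Theorem \ref{theorem:multiplicity-as-integer-solutions} with hook lengths $\{2,1\}$ and $n(\lambda) \in \{0,1\}$ to get the multiplicities $m_{(2)}$ and $m_{(1,1)}$, split on the parity of $d$, and substitute into the formula $\sum_i \binom{m_i+1}{2}$ from Corollary \ref{dimcor}. The only cosmetic difference is which of $y_1, y_2$ carries the coefficient $2$, which is immaterial, and your sanity check $m_{(2)} + m_{(1,1)} = d+1$ is a nice addition.
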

\begin{proof}
The hook lengths are $h_1=2$ and $h_2=1$ for both partitions of $n=2$
corresponding to the trivial and alternating representations. 
Furthermore, $n(\ytableausetup{boxsize=.4em}\ydiagram{2}) = 0$ and $n(\ytableausetup{boxsize=.4em}\ydiagram{1,1}) = 1$.
Thus we can fill out the following table,

\begin{center}
  \begin{tabular}{c | c | c | c  }
  $d$ & Partition & $h^Ty = d - n(\lambda)$ & $m_\lambda$ \\ \hline
		odd & \ytableausetup{boxsize=.4em}\ydiagram{2} & $y_1 + 2y_2 = d$ & $\frac{d+1}{2}$   \\
		  & \ytableausetup{boxsize=.4em}\ydiagram{1,1} & $y_1 + 2y_2 = d-1$ & $\frac{d+1}{2} $  \\ \hline
		even & \ytableausetup{boxsize=.4em}\ydiagram{2} & $y_1 + 2y_2 = d$ & $\frac{d}{2}+1$   \\
		  & \ytableausetup{boxsize=.4em}\ydiagram{1,1} & $y_1 + 2y_2 = d-1$ & $\frac{d}{2}$   \\ \hline
  \end{tabular}
\end{center}
By Corollary \ref{dimcor} we need to compute
$$\dim \PSD_{d+1}^{S_2} = \binom{m_{\ytableausetup{boxsize=.3em}\ydiagram{2}} + 1}{2} + \binom{m_{\ytableausetup{boxsize=.3em}\ydiagram{1,1}} + 1}{2},$$
and this gives the result.
\end{proof}
\begin{prop} \label{prop:symmetric-binary}
There exists a change of basis matrix so that every $Q \in \PSD_N^{S_2}$
with $N = d+1$ is of the form 
$$\frac{1}{2}\begin{pmatrix}
    Q^{\ytableausetup{boxsize=.4em}\ydiagram{2}} & \\
    & Q^{\ytableausetup{boxsize=.4em}\ydiagram{1,1}}
\end{pmatrix}$$
where if $d$ is odd
$$Q^{\ytableausetup{boxsize=.4em}\ydiagram{2}} = \begin{pmatrix}
q_{11} + q_{1N} & q_{12}+q_{1(N-1)} &\cdots & q_{1\frac{N}{2}} + q_{1(\frac{N}{2}+1)}  \\   
q_{12}+q_{1(N-1)} & q_{22}+q_{2(N-1)} &\cdots & q_{2\frac{N}{2}} + q_{2(\frac{N}{2}+1)}  \\   
\vdots & \vdots & \ddots & \vdots\\
q_{1\frac{N}{2}}+q_{1(\frac{N}{2}+1)}&q_{2\frac{N}{2}}+q_{2(\frac{N}{2}+1)}&\cdots &q_{\frac{N}{2}\frac{N}{2}}+q_{\frac{N}{2}(\frac{N}{2}+1)}
\end{pmatrix}$$
and 
$$Q^{\ytableausetup{boxsize=.4em}\ydiagram{1,1}} = \begin{pmatrix}
q_{11} - q_{1N} & q_{12}-q_{1(N-1)} &\cdots & q_{1\frac{N}{2}} - q_{1(\frac{N}{2}+1)}  \\   
q_{12}-q_{1(N-1)} & q_{22}-q_{2(N-1)} &\cdots & q_{2\frac{N}{2}} - q_{2(\frac{N}{2}+1)}  \\   
\vdots & \vdots & \ddots & \vdots\\   
q_{1\frac{N}{2}}-q_{1(\frac{N}{2}+1)}&q_{2\frac{N}{2}}-q_{2(\frac{N}{2}+1)}&\cdots &q_{\frac{N}{2}\frac{N}{2}}-q_{\frac{N}{2}(\frac{N}{2}+1)}
\end{pmatrix},$$
while if $d$ is even there are an extra row and column in the trivial block
$$Q^{\ytableausetup{boxsize=.4em}\ydiagram{2}} = \begin{pmatrix}
q_{11} + q_{1N} & q_{12}+q_{1(N-1)} &\cdots & q_{1\frac{N-1}{2}}+q_{1\frac{N+3}{2}} & \sqrt{2}q_{1\frac{N+1}{2}} \\   
q_{12}+q_{1(N-1)} & q_{22}+q_{2(N-1)} &\cdots & q_{2\frac{N-1}{2}}+q_{2\frac{N+3}{2}} &  \sqrt{2}q_{2\frac{N+1}{2}} \\   
 \vdots & \vdots &\ddots  & \vdots  & \vdots \\   
q_{1\frac{N-1}{2}}+q_{1\frac{N+3}{2}}&q_{2\frac{N-1}{2}}+q_{2\frac{N+3}{2}}&\cdots &q_{\frac{N-1}{2}\frac{N-1}{2}}+q_{\frac{N-1}{2}\frac{N+3}{2}} &\sqrt{2}q_{\frac{N-1}{2}\frac{N+1}{2}} \\
\sqrt{2}q_{1\frac{N+1}{2}}& \sqrt{2}q_{2\frac{N+1}{2}}&\cdots&\sqrt{2}q_{\frac{N-1}{2}\frac{N+1}{2}} & q_{\frac{N+1}{2}\frac{N+1}{2}}
\end{pmatrix}$$
and 
$$Q^{\ytableausetup{boxsize=.4em}\ydiagram{1,1}} = \begin{pmatrix}
q_{11} - q_{1N} & q_{12}-q_{1(N-1)} &\cdots & q_{1\frac{N-1}{2}}-q_{1\frac{N+3}{2}}  \\   
q_{12}-q_{1(N-1)} & q_{22}-q_{2(N-1)} &\cdots & q_{2\frac{N-1}{2}}-q_{2\frac{N+3}{2}}  \\   
\vdots & \vdots & \ddots & \vdots\\      
q_{1\frac{N-1}{2}}-q_{1\frac{N+3}{2}}&q_{2\frac{N-1}{2}}-q_{2\frac{N+3}{2}}&\cdots &q_{\frac{N-1}{2}\frac{N-1}{2}}-q_{\frac{N-1}{2}\frac{N+3}{2}}
\end{pmatrix}.$$
\end{prop}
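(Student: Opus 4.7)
The plan is to apply Algorithm \ref{alg:symmetry-adapted-basis} directly to the case $G=S_2$ acting on $V=\R[x,y]_d$ by the substitution $(x,y)\mapsto(y,x)$, and then compute $T^{T}QT$ entry by entry. First I would identify how $S_2 = \{e,\sigma\}$ acts on the ordered monomial basis $b_1,\dots,b_N$ with $b_i = x^{N-i}y^{i-1}$, $N=d+1$. Since $\sigma\cdot b_i = b_{N+1-i}$, the matrix $D(\sigma)$ is the antidiagonal (reversal) permutation matrix, which is both orthogonal and self-inverse. Hence the $S_2$-invariance condition $D(\sigma)^T Q D(\sigma)=Q$ is equivalent to the entry-wise relation $q_{ij}=q_{N+1-i,\,N+1-j}$, and in particular $q_{i,N+1-j}=q_{N+1-i,j}$.

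Next, I would construct the symmetry adapted change-of-basis matrix $T$ using Algorithm \ref{alg:symmetry-adapted-basis}. There are only two irreducibles, the trivial $\ytableausetup{boxsize=.4em}\ydiagram{2}$ and the sign $\ytableausetup{boxsize=.4em}\ydiagram{1,1}$, both of dimension $1$ and of type~$2$, with characters $\chi_{\ytableausetup{boxsize=.3em}\ydiagram{2}}(\sigma)=1$ and $\chi_{\ytableausetup{boxsize=.3em}\ydiagram{1,1}}(\sigma)=-1$. The projector $\pi^{\ytableausetup{boxsize=.3em}\ydiagram{2}}=I+D(\sigma)$ has columns $e_i+e_{N+1-i}$, and a maximal linearly independent set is obtained from $i=1,\dots,\lceil N/2\rceil$; applying the orthogonalization step yields the unit vectors $\tfrac{1}{\sqrt{2}}(e_i+e_{N+1-i})$ for $i<\tfrac{N+1}{2}$ and, when $N$ is odd (i.e.\ $d$ even), the additional vector $e_{(N+1)/2}$. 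Analogously, $\pi^{\ytableausetup{boxsize=.3em}\ydiagram{1,1}}=I-D(\sigma)$ produces the orthonormal vectors $\tfrac{1}{\sqrt{2}}(e_i-e_{N+1-i})$ for $i<\tfrac{N+1}{2}$. Collecting these as the columns of $T$ gives an orthogonal change-of-basis matrix, so $T^{-1}=T^T$ and the isotypic decomposition from Corollary~\ref{cor:characterize-psd-cone} forces $T^T Q T$ to be block diagonal with the trivial and sign blocks of the stated sizes $\lceil N/2\rceil$ and $\lfloor N/2\rfloor$.

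It then remains to compute the entries of each block explicitly. For $i,j<\tfrac{N+1}{2}$, expanding
\[
(T^T Q T)^{\ytableausetup{boxsize=.3em}\ydiagram{2}}_{ij}
=\tfrac{1}{2}(e_i+e_{N+1-i})^T Q (e_j+e_{N+1-j})
=\tfrac{1}{2}\bigl(q_{ij}+q_{i,N+1-j}+q_{N+1-i,j}+q_{N+1-i,N+1-j}\bigr)
\]
and using the two invariance identities above collapses this to $q_{ij}+q_{i,N+1-j}$, matching the displayed trivial block in the proposition. The same expansion with a minus sign gives the sign block entries $q_{ij}-q_{i,N+1-j}$. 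When $d$ is even, the extra middle column of $T$ contributes the row/column $(T^T Q T)^{\ytableausetup{boxsize=.3em}\ydiagram{2}}_{i,(N+1)/2}=\tfrac{1}{\sqrt{2}}(e_i+e_{N+1-i})^T Q e_{(N+1)/2}=\sqrt{2}\,q_{i,(N+1)/2}$ together with the diagonal value $q_{(N+1)/2,(N+1)/2}$, accounting for the $\sqrt{2}$ factors and the lone unaltered entry in the last row and column of $Q^{\ytableausetup{boxsize=.3em}\ydiagram{2}}$.

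I do not expect any deep obstacle here: the whole argument is a direct application of the algorithm plus careful bookkeeping of indices. The only mildly delicate point is the parity split — when $d$ is even, the middle basis vector $e_{(N+1)/2}$ lies entirely in the trivial isotypic component and carries a different normalization than the paired vectors, which is exactly what produces the extra row and column, the $\sqrt{2}$ factors, and the asymmetry in the block sizes recorded in Corollary~\ref{cor:binary-dim}. Verifying that the antidiagonal invariance $q_{ij}=q_{N+1-i,N+1-j}$ genuinely suffices to collapse the four-term sum to two terms (rather than leaving a stray $\tfrac{1}{2}$) is the step that must be stated cleanly.
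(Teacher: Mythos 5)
Your proposal is correct and follows essentially the same route as the paper: you identify $D((1\,2))$ as the antidiagonal reversal matrix, form the projectors $\pi^{\ytableausetup{boxsize=.3em}\ydiagram{2}}=I+D((1\,2))$ and $\pi^{\ytableausetup{boxsize=.3em}\ydiagram{1,1}}=I-D((1\,2))$, assemble the same orthonormal change-of-basis matrix $T$ (with the isolated middle column $e_{(N+1)/2}$ appearing when $d$ is even), and then compute $T^TQT$; the paper's proof leaves the final entrywise evaluation implicit while you spell it out. One small point to flag: your explicit computation gives $(T^TQT)_{ij}=q_{ij}+q_{i,N+1-j}$ (resp.\ $q_{ij}-q_{i,N+1-j}$), i.e.\ the blocks $Q^{\ytableausetup{boxsize=.3em}\ydiagram{2}}$ and $Q^{\ytableausetup{boxsize=.3em}\ydiagram{1,1}}$ \emph{without} the overall prefactor of $\tfrac12$ that appears in the proposition statement, yet you describe your result as ``matching''; your arithmetic is correct and the $\tfrac12$ in the statement looks like an error (for $d=1$, $N=2$ your answer agrees with the diagonal form in Proposition~\ref{prop:symmetric-quadratic}, which carries no such factor), so this is worth calling out explicitly rather than silently absorbing.
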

\begin{proof}
Again we follow Algorithm \ref{alg:symmetry-adapted-basis} where
$d^1(g) = [1]$ and  $d^2(g) = [\mathrm{sign}(g)]$ for $g \in S_2$, $D(\mathrm{id}) = I_{d+1}$
and $$D(12) = \begin{pmatrix} 
0 &  \cdots & 0 & 1 \\
0 & \cdots & 1 & 0 \\
\vdots & \iddots & \vdots &  \vdots\\    
1 & \cdots & 0 & 0   
\end{pmatrix}.$$

Then $\pi^{\ytableausetup{boxsize=.4em}\ydiagram{2}} = I_{d+1} + D(12)$
and $\pi^{\ytableausetup{boxsize=.4em}\ydiagram{1,1}} = I_{d+1} - D(12)$.
The change of basis matrix $T$ looks a little different depending on
the parity of $d$:

\begin{center}
  \begin{tabular}{ c | c }
    $d$ odd & $d$ even \\ \hline
$\frac{\sqrt{2}}{2}\begin{pmatrix}
1 & 0 & &   & 1   & 0 &  &  \\   
0 & 1 & & & 0 &  1&  &  \\   
 &  &  \vdots& & & & \vdots &   \\   
& & & 1 & 0 & & & 1\\
& & & 1 & 0 & & & -1\\
 &  &  \vdots& & & & \vdots &   \\   
0 & 1 & & &0&  -1 &   \\   
1 & 0 & &  &-1  & 0&   &      
\end{pmatrix}$ &
$\frac{\sqrt{2}}{2}\begin{pmatrix}
1 & 0 & &   & 1 & 0    &  \\   
0 & 1 &  & &  0 &1&  \\   
 &  &  \vdots&  & &&  \vdots &   1 \\   
& & & \sqrt{2}&  & && 0 \\
 &  &  \vdots&  &&&  \vdots &   -1\\   
0 & 1 & & & 0 & -1 && \\   
1 & 0 & &  & -1 &  0 &     \end{pmatrix}$
  \end{tabular}
\end{center}
By computing $T^TQT$  we get $Q^{\ytableausetup{boxsize=.4em}\ydiagram{2}}$ and $Q^{\ytableausetup{boxsize=.4em}\ydiagram{1,1}}$
in both cases.
\end{proof}

We briefly consider an example to which we will return in Section \ref{sec:inequalities}. 
\begin{ex}
Consider the symmetric polynomial inequality $P_{4}(x,y) \geq P_{1111}(x,y)$ where $P_{4}(x,y) = \frac{1}{2} (x^4+y^4)$
and $P_{1111}(x,y)= \frac{1}{16}(x+y)^4$.
It is proven in \cite{CGS2011} that this inequality holds over the nonnegative orthant. We can certify this inequality via sums of squares.
First, define the polynomial, 
$$f(x,y) = (P_{4} - P_{1111})(x^2,y^2) = \frac{1}{2}(x^8+y^8)-\frac{1}{16}(x^2+y^2)^4 $$
$$=\frac{7}{16} \, x^{8} - \frac{1}{4} \, x^{6} y^{2} - \frac{3}{8} \, x^{4} y^{4} - \frac{1}{4} \, x^{2} y^{6} + \frac{7}{16} \, y^{8}$$
and note that if $f$ is SOS, then the above inequality holds for $x,y \geq 0$. Next, assume that $f= m(x)^TQm(x)$ where $Q = (q_{ij})$ is a $5\times 5$
symmetric matrix in the monomial basis $m(x)^T = [x^4, x^3y, x^2y^2, xy^3, y^4]$.
We equate the coefficients of $f(x,y)$ and 
\begin{align*}
    m(x)^TQm(x) = q_{11}x^8 + 2q_{12}x^7y &+ (2q_{13}+ q_{22})x^6y^2 + (2q_{14} + 2q_{23})x^5y^3 + (2q_{15} + 2q_{24} + q_{33})x^4y^4 \\
    &+ (2q_{14} + 2q_{23})x^3y^5 + (2q_{13} + q_{22})x^2y^6 + 2q_{12}xy^7 + q_{11}y^8
\end{align*}
to find out $q_{11} = \frac{7}{16}$, $q_{12} = 0$, $q_{22} = -\frac{1}{4}-2q_{13}$, $q_{23} = -q_{14}$, and $q_{33} = -\frac{3}{8} - 2q_{15} - 2q_{24}$. Substituting these into the computed matrices
in Proposition \ref{prop:symmetric-binary} for $d=4$ (even), 
our matrix $Q$ becomes
$$\left(\begin{array}{rrrrr}
q_{15} + \frac{7}{16} & q_{14} & \sqrt{2} q_{13} & 0 & 0 \\
q_{14} & -2 \, q_{13} + q_{24} - \frac{1}{4} & -\sqrt{2} q_{14} & 0 & 0 \\
\sqrt{2} q_{13} & -\sqrt{2} q_{14} & -2 \, q_{15} - 2 \, q_{24} - \frac{3}{8} & 0 & 0 \\
0 & 0 & 0 & -q_{15} + \frac{7}{16} & -q_{14} \\
0 & 0 & 0 & -q_{14} & -2 \, q_{13} - q_{24} - \frac{1}{4}
\end{array}\right).$$
Now we can run an SDP on this to certify that $f$ is SOS.  
One rank two solution is
$$\left(\begin{array}{rrrrr}
\frac{7}{8} & 0 & -\frac{7\sqrt{2}}{8}  & 0 & 0 \\
0 & 0 & 0 & 0 & 0 \\
-\frac{7\sqrt{2}}{8}  & 0 & \frac{7}{4} & 0 & 0 \\
0 & 0 & 0 & 0 & 0 \\
0 & 0 & 0 & 0 & 3
\end{array}\right). $$
This is indeed positive semidefinite and thus $f(x,y) = (P_{4} - P_{1111})(x^2,y^2)$ is SOS.
\end{ex}

In the rest of the section we consider symmetric quadratic polynomials ($d=1$) in any number of variables $n$. Then $N=n$ and $D(g)$ are
the $n \times n$ permutation matrices represented in the 
monomial basis $\{x_1, x_2, \ldots, x_n\}$. It is not hard to see 
that in this basis all $n \times n$ symmetric matrices which 
commute with all permutation matrices are  of the form
$$ Q = \begin{pmatrix}
q_{11} & q_{12} &\cdots & q_{12}  \\   
q_{12} & q_{11} &\cdots & q_{12}  \\   
\vdots & \vdots & \ddots & \vdots \\   
q_{12} & q_{12} &\cdots &q_{11}
\end{pmatrix}.$$
In a symmetry adapted basis the matrices look even simpler.
\begin{prop} \label{prop:symmetric-quadratic}
There is a change of basis matrix such that every $Q \in \PSD_n^{S_n}$ is of the form
$$ \begin{pmatrix}
q_{11} + (n-1)q_{12} & 0 &\cdots & 0  \\   
0 & q_{11} - q_{12} &\cdots & 0  \\   
\vdots & \vdots & \ddots & \vdots \\   
0 & 0 &\cdots & q_{11} - q_{12}
\end{pmatrix}.$$
\end{prop}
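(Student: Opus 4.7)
The plan is to recognize that the proposition is really just the diagonalization of a matrix of the special form displayed just above it, but packaged through the representation-theoretic machinery of Corollary \ref{cor:characterize-psd-cone}.

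First I would invoke the fact, recalled in the preceding paragraph, that any $Q \in \PSD_n^{S_n}$ written in the monomial basis $\{x_1,\ldots,x_n\}$ has the form $Q = (q_{11}-q_{12})I_n + q_{12} J_n$, where $J_n$ is the all-ones matrix. The degree-one representation $V = \R[x_1,\ldots,x_n]_1$ is the defining permutation representation of $S_n$, which decomposes as $V = V_{(n)} \oplus V_{(n-1,1)}$, i.e.\ one copy of the trivial irreducible (dimension $1$) and one copy of the standard irreducible (dimension $n-1$). Applying Theorem~\ref{theorem:multiplicity-as-integer-solutions} gives the multiplicities $m_1 = m_2 = 1$, so Corollary~\ref{cor:characterize-psd-cone} predicts that $Q$ becomes block diagonal with a $1 \times 1$ trivial block $\tilde Q_1 = [a]$ and an $(n-1) \times (n-1)$ standard block $\tilde Q_2 = b I_{n-1}$ consisting of $n-1$ identical $1\times 1$ scalars.

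Next I would identify the scalars $a$ and $b$ explicitly. The trivial isotypic component is spanned by $v_{\text{triv}} = \frac{1}{\sqrt{n}}(x_1 + \cdots + x_n)$, which is the normalized eigenvector of $J_n$ with eigenvalue $n$, and so of $Q$ with eigenvalue $(q_{11}-q_{12}) + n q_{12} = q_{11} + (n-1)q_{12}$. The standard isotypic component is the orthogonal complement, on which $J_n$ acts by $0$, so $Q$ acts by the scalar $q_{11} - q_{12}$. Concretely, a symmetry adapted basis can be produced by running Algorithm~\ref{alg:symmetry-adapted-basis}: the projector $\pi^{(n)} = \sum_{g \in S_n} D(g)$ is a positive multiple of $J_n$, so its column space is $\R \cdot v_{\text{triv}}$, and the remaining basis vectors arise from the orthogonal complement under the standard inner product (which is already $S_n$-invariant here, so no modification is needed).

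Finally I would simply compute $T^T Q T$ for the orthogonal change of basis matrix $T$ whose first column is $v_{\text{triv}}$ and whose remaining columns are any orthonormal basis of $v_{\text{triv}}^\perp$. Since these two subspaces are eigenspaces of $Q$ with eigenvalues $q_{11}+(n-1)q_{12}$ and $q_{11}-q_{12}$ respectively, the resulting matrix is exactly the diagonal matrix in the statement. There is no real obstacle; the only point to check carefully is that the standard block, which Corollary~\ref{cor:characterize-psd-cone} asserts consists of $n-1 = \dim V_{(n-1,1)}$ identical copies of a single scalar, indeed matches the $(n-1)$-fold eigenvalue $q_{11}-q_{12}$ computed above, which it does.
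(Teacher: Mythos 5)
Your proposal is correct and follows essentially the same path as the paper: identify the permutation representation's decomposition into the trivial and standard irreducibles (each with multiplicity one), invoke Corollary~\ref{cor:characterize-psd-cone} / Algorithm~\ref{alg:symmetry-adapted-basis}, and conclude the block-diagonal form. You add a helpful explicit verification via the eigenvalues of $(q_{11}-q_{12})I_n + q_{12}J_n$, which the paper leaves implicit, but the underlying argument is the same.
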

\begin{proof}
The representation in question is the permutation representation of $S_n$.
By Theorem \ref{theorem:multiplicity-as-integer-solutions}, the trivial representation and the standard representation both appear with multiplicity one. This tells us that there will be one $1\times 1$ block associated to the trivial representation, and $n-1$ copies of a $1\times1$ block associated to the standard representation. The application of Algorithm $\ref{alg:symmetry-adapted-basis}$ yields
the desired diagonal matrix.
\end{proof}

\begin{cor}
An $n\times n$ symmetric matrix $Q = (q_{ij})$ which commutes with $D((1\,2))$ is in $\PSD_n^{S_n}$
if and only if $q_{12} \leq q_{11}$ and $q_{12} \geq \frac{-1}{n-1}q_{11}$. Hence $\PSD_n^{S_n}$ is a two-dimensional
polyhedral cone defined by these linear inequalities.
\end{cor}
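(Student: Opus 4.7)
The plan is to reduce directly to Proposition \ref{prop:symmetric-quadratic}. By that proposition, applying the symmetry adapted change of basis $T$ produces
\[
T^T Q T = \mathrm{diag}\bigl(q_{11} + (n-1)q_{12},\, q_{11} - q_{12},\, \ldots,\, q_{11} - q_{12}\bigr),
\]
with the second eigenvalue appearing $n-1$ times. The change of basis matrix $T$ constructed by Algorithm \ref{alg:symmetry-adapted-basis} (with the Gram--Schmidt step built in) is orthogonal, so congruence by $T$ preserves positive semidefiniteness. Hence $Q \succeq 0$ if and only if every diagonal entry of $T^T Q T$ is nonnegative.

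The two distinct diagonal entries give exactly two inequalities. First, $q_{11} - q_{12} \geq 0$ is equivalent to $q_{12} \leq q_{11}$. Second, $q_{11} + (n-1)q_{12} \geq 0$ is equivalent to $q_{12} \geq \tfrac{-1}{n-1} q_{11}$. These are precisely the claimed conditions.

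For the final assertion about the cone: any symmetric matrix $Q$ commuting with the full permutation action is determined by the two scalars $q_{11}$ (the common diagonal entry) and $q_{12}$ (the common off-diagonal entry), so $\PSD_n^{S_n}$ lies inside a two-dimensional linear subspace of $\Sym^n$. This is also consistent with Corollary \ref{dimcor} applied with multiplicities $m_1 = m_2 = 1$, giving $\dim \PSD_n^{S_n} = \binom{2}{2} + \binom{2}{2} = 2$. The two inequalities derived above are linear in $(q_{11}, q_{12})$ and pass through the origin, so they cut out a two-dimensional polyhedral cone in this subspace.

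There is no substantive obstacle: the whole statement is a direct consequence of Proposition \ref{prop:symmetric-quadratic} together with the fact that a diagonal matrix is PSD exactly when its entries are nonnegative. The only point that needs care is to justify that the change of basis preserves positive semidefiniteness, which follows from the orthogonality built into Algorithm \ref{alg:symmetry-adapted-basis}.
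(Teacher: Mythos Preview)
Your proof is correct and follows exactly the route the paper intends: the corollary is stated without proof immediately after Proposition~\ref{prop:symmetric-quadratic}, and your argument---reading off the two eigenvalues from the diagonalized form and requiring each to be nonnegative---is the implied justification. One minor remark: you do not actually need $T$ to be orthogonal, since congruence by any invertible matrix preserves positive semidefiniteness; but in this case the permutation matrices $D(g)$ are already orthogonal in the monomial basis, so the symmetry adapted basis produced by Algorithm~\ref{alg:symmetry-adapted-basis} is orthonormal and your observation is valid as stated.
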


\begin{cor}
Let $f(x) = a\sum_i x_i^2 + b\sum_{i<j} x_ix_j$ be a symmetric quadratic
form. Then $f$ is SOS if and only if $ \frac{-1}{n-1} a \leq b \leq a$.
Moreover, the symmetry adapted Gram spectrahedron $K_f^{S_n}$ is 
either empty or an isolated point in $\PSD_n^{S_n}$.
\end{cor}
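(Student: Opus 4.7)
The plan is to combine the preceding corollary (which characterizes $\PSD_n^{S_n}$ via just two scalar parameters $q_{11}$ and $q_{12}$) with a one-line matching of coefficients. The first step would be to invoke Proposition~\ref{prop:existence-of-symmetry-adapted-Q} together with Corollary~\ref{corollary:Kf-nonempty}: $f$ is SOS precisely when $K_f^{S_n}\neq\emptyset$, i.e.\ precisely when there exists $Q\in\PSD_n^{S_n}$ with $f(x)=m(x)^T Q m(x)$ for the linear monomial vector $m(x)=(x_1,\dots,x_n)^T$. Any such $Q$, by the displayed matrix form just preceding Proposition~\ref{prop:symmetric-quadratic}, has all diagonal entries equal to one scalar $q_{11}$ and all off-diagonal entries equal to another scalar $q_{12}$.

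Next, I would expand
$$m(x)^T Q m(x) \;=\; q_{11}\sum_i x_i^2 \;+\; 2q_{12}\sum_{i<j}x_ix_j$$
and compare coefficients with $f(x)=a\sum_i x_i^2+b\sum_{i<j}x_ix_j$. This uniquely pins down $q_{11}$ and $q_{12}$ as explicit scalar multiples of $a$ and $b$, so that the symmetry-adapted Gram representative of $f$ is unique when it exists at all. Substituting those values into the two linear PSD inequalities from the preceding corollary, namely $-\tfrac{1}{n-1}q_{11}\le q_{12}\le q_{11}$, translates them directly (after clearing the coefficient-matching constant) into the claimed bounds $-\tfrac{a}{n-1}\le b\le a$.

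For the ``empty or isolated point'' dichotomy, the key observation is that the two free parameters $q_{11},q_{12}$ of a symmetry-adapted matrix are both determined by $a$ and $b$. Hence the affine slice of symmetry-adapted matrices representing $f$ is already a single point in $\Sym^n$. Therefore $K_f^{S_n}$ is either exactly that point (when the inequalities hold) or empty (when they fail); in the nonempty case it is automatically an isolated zero-dimensional face of $\PSD_n^{S_n}$.

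I do not anticipate any substantive obstacle here: the argument is a short bookkeeping calculation on top of Proposition~\ref{prop:symmetric-quadratic} and its corollary. The only point requiring a moment's care is invoking Proposition~\ref{prop:existence-of-symmetry-adapted-Q} at the outset, so that restricting attention to symmetry-adapted Gram matrices loses no SOS representations of $f$; once that is in place, the rigidity of the symmetry-adapted form collapses the Gram spectrahedron to at most a single matrix, and the claim follows.
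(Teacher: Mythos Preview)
Your approach is exactly the paper's: identify $q_{11},q_{12}$ by coefficient matching and then read off the inequalities from the preceding corollary, noting that this pins down a unique symmetry-adapted Gram matrix. The paper's proof is literally the one-liner ``$a=q_{11}$ and $b=q_{12}$, so $K_f^{S_n}=\{(a,b)\}$ iff $f$ is SOS.''

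One caution worth recording: your own expansion $m(x)^TQm(x)=q_{11}\sum_i x_i^2+2q_{12}\sum_{i<j}x_ix_j$ is the correct one, so the matching actually gives $q_{11}=a$ and $q_{12}=b/2$. Substituting into $-\tfrac{1}{n-1}q_{11}\le q_{12}\le q_{11}$ then yields $-\tfrac{2a}{n-1}\le b\le 2a$, not the bounds printed in the statement (e.g.\ for $n=2$, $(x+y)^2$ has $a=1$, $b=2$). The paper's proof sidesteps this by writing $b=q_{12}$, which silently drops the factor of~$2$; your more careful bookkeeping would have exposed it had you carried the substitution through rather than asserting it gives ``the claimed bounds.'' The method is sound; only the constant in the stated inequality needs adjusting.
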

\begin{proof}
Observe that $a=q_{11}$ and $b=q_{12}$ by (\ref{equation:sos-representation-by-monomial-gram-monomial}). Clearly, $K_f^{S_n} = \{(a,b)\}$
if and only if $f$ is SOS. 
\end{proof}
\begin{cor}\label{corollary:quadratic-SOS-one-nminus1-or-n}
Symmetric quadratic SOS forms can only be written as 
a sum of one, $n-1$, or $n$ squares. 
\end{cor}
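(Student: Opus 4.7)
The plan is to read the answer directly off Proposition~\ref{prop:symmetric-quadratic} after two preliminary observations. First, I would note that when $d=1$ the Gram matrix $Q$ satisfying $f(x)=m(x)^T Q m(x)$ with $m(x)=(x_1,\dots,x_n)^T$ is \emph{uniquely} determined by $f$: since the monomials $\{x_ix_j : i\le j\}$ are linearly independent in $\R[x_1,\dots,x_n]_2$, matching coefficients fixes every entry of the symmetric matrix $Q$. In particular, $L_f$ is a single point, so $K_f$ is either empty or that one matrix, and when $f$ is a symmetric quadratic SOS form this matrix lies in $\PSD_n^{S_n}$.

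Second, I would use the standard fact that for a fixed Gram matrix, the minimum number of squares in the corresponding SOS representation equals $\mathrm{rank}(Q)$: a spectral decomposition $Q=\sum_{k=1}^{\mathrm{rank}(Q)} \lambda_k v_k v_k^T$ with $\lambda_k>0$ gives $f(x)=\sum_k\bigl(\sqrt{\lambda_k}\,v_k^T m(x)\bigr)^2$, and no representation with fewer squares can exist because any such representation would yield a PSD factorization of $Q$ of smaller rank. Since $L_f$ is a single point, the number of squares needed for \emph{any} SOS representation of $f$ is exactly $\mathrm{rank}(Q)$.

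Third, I would apply Proposition~\ref{prop:symmetric-quadratic} to pass to a symmetry adapted basis via an orthogonal change of coordinates (which preserves rank). In that basis $Q$ is diagonal with a single entry $\alpha:=q_{11}+(n-1)q_{12}$ coming from the trivial isotypic block and $n-1$ copies of $\beta:=q_{11}-q_{12}$ coming from the standard isotypic block. Consequently the rank of $Q$ can only take the values $0$ (when $\alpha=\beta=0$, i.e.\ $f\equiv 0$), $1$ (when $\alpha>0,\beta=0$), $n-1$ (when $\alpha=0,\beta>0$), or $n$ (when both are positive), yielding the three allowed cardinalities $1,\ n-1,\ n$ for nonzero $f$.

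There is no real obstacle: once one recognizes that the symmetry adapted diagonal form of $Q$ has only two distinct eigenvalues with \emph{predetermined} multiplicities $1$ and $n-1$, the possible ranks are forced. The only subtle point is reminding the reader that in the quadratic case $L_f$ is zero-dimensional, so one cannot try to lower the rank by moving inside the Gram spectrahedron.
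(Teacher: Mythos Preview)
Your proof is correct and follows essentially the same approach as the paper's own argument: both observe that for $d=1$ the Gram matrix is uniquely determined by $f$ (so $L_f$ is a point), and then read off the possible ranks $1,\ n-1,\ n$ from the symmetry adapted diagonal form of Proposition~\ref{prop:symmetric-quadratic}. Your presentation is slightly more explicit about why rank equals the minimal number of squares, but the underlying idea is identical.
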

\begin{proof}
Let $f(x) = a\sum_i x_i^2 + b\sum_{i<j} x_ix_j$ be a symmetric quadratic form and consider (\ref{equation:sos-representation-by-monomial-gram-monomial}) with $Q\in \PSD_N$,
$$a\sum_i x_i^2 + b\sum_{i<j} x_ix_j = \begin{pmatrix}
x_1 & \cdots & x_n
\end{pmatrix}
\begin{pmatrix}
q_{11} & q_{12} & \cdots & q_{1n} \\
q_{12} & q_{22} & \cdots & q_{2n} \\
\vdots &\vdots &\ddots & \cdots \\
q_{1n} & q_{2n} & \cdots & q_{nn} \\
\end{pmatrix}
\begin{pmatrix}
x_1 \\ \vdots \\ x_n
\end{pmatrix}.$$
Equating coefficients we see that $a = q_{ii}$ and $b = q_{ij}$ for $i\neq j$, the same structure as any invariant matrix.
Thus  $\PSD_n^{S_n}$ is in fact representative of all SOS decompositions of symmetric quadratic SOS forms.
Now, if the point $(a,b)$ is in the interior of $\PSD_n^{S_n}$, the corresponding matrix has full rank.  
If it is on the extreme ray defined by $q_{12} = q_{11}$, the matrix rank will be one as all the blocks $q_{12} - q_{11}$ will be zero.
Lastly, if it is on the other extreme ray, we get a rank $n-1$ matrix.
\end{proof}

Finally, we consider what happens as the number of variables goes to infinity. In particular, note that the slope of $q_{12} = \frac{-1}{n-1}q_{11}$ goes to zero. This leads to the following result.
\begin{theorem}
As $n$ goes to infinity, the ratio of SOS symmetric quadratic forms in $n$ variables to all symmetric quadratic forms in $n$ variables is $\frac{1}{8}$.
\end{theorem}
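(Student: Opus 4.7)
The plan is to interpret the space of symmetric quadratic forms in $n$ variables as the plane $\R^2$, parameterized by the coefficients $(a,b)$ from $f(x) = a\sum_i x_i^2 + b\sum_{i<j} x_ix_j$, and then interpret "ratio" as the ratio of solid angles, which in the two-dimensional setting reduces to the ratio of angular measures of the respective cones.

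First, I would invoke the preceding corollary, which tells us that the set $C_n$ of SOS symmetric quadratic forms in $n$ variables is precisely the cone
\begin{equation*}
    C_n = \left\{ (a,b) \in \R^2 \,:\, -\tfrac{1}{n-1}\,a \,\le\, b \,\le\, a \right\}.
\end{equation*}
Note that the two defining inequalities together imply $a \ge 0$, so $C_n$ is indeed a planar convex cone with apex at the origin, bounded by the two rays $R_+ = \{b = a, \, a \ge 0\}$ and $R_- = \{b = -\tfrac{1}{n-1}a, \, a \ge 0\}$. The space of all symmetric quadratic forms is all of $\R^2$.

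Next, I would compute the angular measure $\theta(C_n)$ of $C_n$. The ray $R_+$ makes angle $\pi/4$ with the positive $a$-axis, while $R_-$ makes angle $-\arctan\!\bigl(\tfrac{1}{n-1}\bigr)$. Therefore
\begin{equation*}
    \theta(C_n) \,=\, \frac{\pi}{4} + \arctan\!\left(\tfrac{1}{n-1}\right).
\end{equation*}
The ratio of SOS forms to all symmetric quadratic forms (measured by angle on the unit circle) is $\theta(C_n)/(2\pi)$.

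Finally, I would take the limit as $n \to \infty$. Since $\arctan(1/(n-1)) \to 0$, we get $\theta(C_n) \to \pi/4$, and hence
\begin{equation*}
    \lim_{n \to \infty} \frac{\theta(C_n)}{2\pi} \,=\, \frac{\pi/4}{2\pi} \,=\, \frac{1}{8}.
\end{equation*}
The only real subtlety is making explicit that we are measuring ratio via angular (equivalently, solid-angle or density-on-the-unit-circle) measure, since both cones are unbounded and any literal Lebesgue ratio is ill-defined; once this is fixed, the calculation is immediate and the result follows from the geometric observation that the lower bounding ray $R_-$ rotates up to the positive $a$-axis as $n \to \infty$, leaving the limiting SOS cone as the wedge $\{0 \le b \le a\}$, which occupies exactly one-eighth of the plane.
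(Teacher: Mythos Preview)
Your proposal is correct and follows essentially the same approach as the paper. The paper itself does not give a formal proof; it only remarks that the slope of the line $q_{12} = \frac{-1}{n-1} q_{11}$ goes to zero and then states the theorem. Your argument makes this precise by explicitly computing the angular measure of the cone $C_n$ and taking the limit, and you also make explicit the interpretation of ``ratio'' as a solid-angle (angular) measure, a point the paper leaves implicit.
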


\section{Ternary Symmetric Polynomials}
\label{section:ternary-symmetric}

In this section we consider the case where $n = 3$ and $N = \frac{1}{2}(d+2)(d+1)$.
\begin{prop} \label{prop:ternary-multiplicities}
Let $V = \C[x_1,x_2,x_3]_d$ be the representation of $S_3$ induced
by permuting the variables. Then the multiplicities of the 
trivial, standard, and alternating irreducible representations
are as in the following table
\begin{center}
  \begin{tabular}{ c | c | c}
   Partition & $h^Ty = d - n(\lambda)$ & $m_\lambda$  \\ \hline
		\ytableausetup{boxsize=.4em}\ydiagram{3} & $y_1 + 2y_2 + 3y_3 = d$ & $Q(d)$ \\
		\ytableausetup{boxsize=.4em}\ydiagram{2,1} & $y_1 + y_2 + 3y_3  = d-1$ & $P(d-1)$   \\ 
	    \ytableausetup{boxsize=.4em}\ydiagram{1,1,1} & $y_1 + 2y_2 + 3y_3 = d-3$  & $Q(d-3)$ \\
  \end{tabular}
\end{center}
where $Q(d)$ and $P(d)$ are quasi-polynomials as below:
$$Q(d) = \begin{cases}
\frac{1}{12}d^2 + \frac{1}{2}d + 1 & d \equiv 0 \mod 6\\
\frac{1}{12}d^2 + \frac{1}{2}d + \frac{5}{12} & d \equiv 1 \mod 6\\ 
\frac{1}{12}d^2 + \frac{1}{2}d + \frac{2}{3} & d \equiv 2 \mod 6\\
\frac{1}{12}d^2 + \frac{1}{2}d + \frac{3}{4} & d \equiv 3 \mod 6\\
\frac{1}{12}d^2 + \frac{1}{2}d + \frac{2}{3} & d \equiv 4 \mod 6\\
\frac{1}{12}d^2 + \frac{1}{2}d + \frac{5}{12} & d \equiv 5 \mod 6\\
\end{cases}   \quad \quad
P(d) = \begin{cases}
\frac{1}{6}d^2 + \frac{5}{6}d + 1 & d \equiv 0 \mod 3\\ 
\frac{1}{6}d^2 + \frac{5}{6}d + 1 & d \equiv 1 \mod 3\\
\frac{1}{6}d^2 + \frac{5}{6}d  + \frac{2}{3} & d \equiv 2 \mod 3\\
\end{cases}$$
\end{prop}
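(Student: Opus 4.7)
The plan is to apply Theorem \ref{theorem:multiplicity-as-integer-solutions} directly to each of the three partitions $(3), (2,1), (1,1,1)$ of $n=3$, and then count nonnegative integer solutions to the resulting linear Diophantine equations.

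First I would record the combinatorial data attached to each Young diagram. For $\lambda = (3)$, the hook lengths reading left to right are $(3,2,1)$ and $n(\lambda) = 0$, so the equation is $3y_1 + 2y_2 + y_3 = d$, which I will reindex as $y_1 + 2y_2 + 3y_3 = d$ (permuting the roles of $y_1,y_2,y_3$ does not change the count). For $\lambda = (2,1)$, the hook lengths of the three boxes are $3,1,1$ and $n(\lambda) = 0\cdot 2 + 1\cdot 1 = 1$, giving $y_1 + y_2 + 3y_3 = d-1$. For $\lambda = (1,1,1)$, the hook lengths are $3,2,1$ and $n(\lambda) = 0 + 1 + 2 = 3$, giving $y_1 + 2y_2 + 3y_3 = d-3$. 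Note this immediately yields the symmetry $m_{(1,1,1)}(d) = m_{(3)}(d-3)$, so it suffices to analyze the trivial case and the standard case separately.

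For the trivial (and hence alternating) case, I would count solutions of $y_1 + 2y_2 + 3y_3 = d$ by fixing $y_3 = k$ and noting that the remaining equation $y_1 + 2y_2 = d - 3k$ has exactly $\lfloor (d-3k)/2 \rfloor + 1$ solutions. Thus
\begin{equation*}
Q(d) = \sum_{k=0}^{\lfloor d/3 \rfloor} \left( \left\lfloor \tfrac{d-3k}{2} \right\rfloor + 1 \right).
\end{equation*}
Splitting into the six residue classes $d \bmod 6$ turns each floor into an explicit expression and the sum evaluates to $\tfrac{1}{12} d^2 + \tfrac{1}{2} d$ plus the residue-dependent constant recorded in the statement. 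Equivalently, one may read off $Q(d)$ from the generating function $\tfrac{1}{(1-q)(1-q^2)(1-q^3)}$ via partial fractions, which produces exactly the claimed quasi-polynomial of period $6$.

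For the standard case, I would fix $y_3 = k$ and count that $y_1 + y_2 = d-1-3k$ has $d-3k$ nonnegative integer solutions, whenever $d - 1 - 3k \geq 0$. Therefore
\begin{equation*}
P(d-1) = \sum_{k=0}^{\lfloor (d-1)/3 \rfloor}(d - 3k),
\end{equation*}
which is an arithmetic series; splitting by $(d-1) \bmod 3$ yields the three-case quasi-polynomial of period $3$ given in the statement. The main obstacle is purely computational, namely executing the case analysis on residues cleanly so that each of the six (respectively three) branches collapses to the stated closed form; this is tedious but mechanical, and I would verify the result at a few small values of $d$ (for instance $d = 2,3,4,5,7$) as a sanity check before recording the final expressions.
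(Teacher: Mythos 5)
Your proof is correct and follows the same overall strategy as the paper: apply Theorem \ref{theorem:multiplicity-as-integer-solutions} to each partition of $3$, obtain the three Diophantine equations (hook lengths $(3,2,1)$, $(3,1,1)$, $(3,2,1)$ and $n(\lambda)=0,1,3$ respectively, so $m_{(1,1,1)}(d)=m_{(3)}(d-3)$), and recognize the answer as the lattice point count of a dilated rational $2$-simplex. The only real difference is in how you finish: the paper invokes Ehrhart quasi-polynomial theory (citing \cite{BR2015}) essentially as a black box and simply asserts that the counts are the stated $Q(d)$ and $P(d)$, whereas you derive the closed forms explicitly by slicing over $y_3=k$ and summing $\lfloor(d-3k)/2\rfloor+1$ (respectively $d-3k$), with the alternative of extracting the coefficient from the Sylvester denumerant generating function $\tfrac{1}{(1-q)(1-q^2)(1-q^3)}$. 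Your route is more elementary and self-contained; the paper's is more compact. Both arrive at the same place, and your observation that the alternating multiplicity is the trivial multiplicity shifted by $3$ in $d$ is exactly the structure the paper exploits by writing it as $Q(d-3)$.
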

\begin{proof}
The multiplicities are computed using Theorem \ref{theorem:multiplicity-as-integer-solutions}. In all three cases,
they are given by the Ehrhart quasi-polynomial \cite{BR2015}
of a rational $2$-simplex. For instance, for the trivial 
representation we wish to count the number of nonnegative integer solutions to the equation $y_1 + 2y_2 + 3y_3 = d$.
This is the number of lattice points in the polytope defined by the hyperplane $y_1 + 2y_2 + 3y_3 = d$ 
and $y_1, y_2, y_3 \geq 0$. The vertices  of this polytope are 
$(d,0,0), (0,d/2,0), (0,0,d/3)$, and it is  
the $d$th dilation of the polytope for $d = 1$. 
The lattice point count is given by the quasi-polynomial $Q(d)$ as
in the statement. Similarly, for the multiplicity of the 
standard representation, the Ehrhart quasi-polynomial $P(d)$ of 
a different two-simplex is needed. 
\end{proof}

\subsection{Symmetric Ternary Quartics}
Now we consider symmetric polynomials in three variables of degree four ($n=3, d=2$).
The study of general ternary quartics has a long history. 
It is known that a smooth ternary quartic can always be written
as $f = q_1^2 + q_2^2 + q_3^2$ where $q_i \in \C[x_1,x_2,x_3]_2$, 
and there are exactly $63$ nonequivalent ways of doing that 
\cite[Ch.1, \S 14]{Coble}. There are always $28$
bitangents to the smooth projective plane curve defined by $f$, and certain sixtuples of pairs of these bitangents, known as 
\textit{Steiner complexes}, correspond to these $63$ different representations; see \cite[Section 5]{PSV2011}. Moreover,  
for real smooth ternary quartics there are exactly $8$ SOS representations
with three squares \cite{PRSS2004}. This means that the usual 
Gram spectrahedron $K_f$ has exactly $8$ vertices corresponding to
matrices of rank $3$. 

In this section, we want to study the symmetry adapted Gram spectrahedron $K_f^{S_3}$. The main objects of focus are the symmetric 
matrices $Q = (q_{ij}) \in \Sym^6$ such that $f(x) = m(x)^TQm(x)$.

\begin{prop} \label{prop:symmetry-adapted-ternary-quartic}
The symmetry adapted $\PSD_6^{S_3}$ is a six-dimensional cone
consisting of positive semidefinite matrices of the form
$$\begin{pmatrix}
q_{11} + 2q_{12} & 2q_{14} + q_{16} & 0 & 0 & 0 & 0 \\
2q_{14} + q_{16} & q_{44} + 2q_{45} & 0 & 0 & 0 & 0 \\
0 & 0 & q_{11} - q_{12} & q_{14} - q_{16} & 0 & 0 \\
0 & 0 & q_{14} - q_{16} &  q_{44} - q_{45} & 0 & 0 \\
0 & 0 & 0 & 0 & q_{11} - q_{12} & q_{14} - q_{16} \\
0 & 0 & 0 & 0 & q_{14} - q_{16} & q_{44} - q_{45}
\end{pmatrix}.$$
\end{prop}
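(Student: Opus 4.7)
The plan is to specialize the general framework of Section \ref{section:preliminaries} to $V = \R[x_1,x_2,x_3]_2$ by computing the isotypic decomposition and exhibiting an explicit symmetry-adapted basis.

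First, using Proposition \ref{prop:ternary-multiplicities} at $d=2$, I would compute the multiplicities: $m_{\ytableausetup{boxsize=.4em}\ydiagram{3}} = Q(2) = 2$, $m_{\ytableausetup{boxsize=.4em}\ydiagram{2,1}} = P(1) = 2$, and $m_{\ytableausetup{boxsize=.4em}\ydiagram{1,1,1}} = 0$ (the equation $y_1 + 2y_2 + 3y_3 = -1$ has no nonnegative integer solutions). Hence $V = 2V_{\ytableausetup{boxsize=.4em}\ydiagram{3}} \oplus 2V_{\ytableausetup{boxsize=.4em}\ydiagram{2,1}}$. Combining Corollary \ref{dimcor} and Corollary \ref{cor:characterize-psd-cone} immediately gives $\dim \PSD_6^{S_3} = \binom{3}{2} + \binom{3}{2} = 6$ together with the prescribed block shape: one $2\times 2$ trivial block and two identical $2\times 2$ standard blocks.

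Next, ordering the monomial basis as $(x_1^2, x_2^2, x_3^2, x_1 x_2, x_1 x_3, x_2 x_3)$ so that the two $S_3$-orbits $O_1 = \{x_1^2, x_2^2, x_3^2\}$ and $O_2 = \{x_1x_2, x_1x_3, x_2x_3\}$ are contiguous, I would build an explicit symmetry-adapted basis. For the trivial components, take the normalized orbit sums $w_1 = (x_1^2 + x_2^2 + x_3^2)/\sqrt{3}$ and $w_2 = (x_1x_2 + x_1x_3 + x_2x_3)/\sqrt{3}$. For the two standard copies, take
\[
u_1 = \tfrac{1}{\sqrt{2}}(x_1^2 - x_2^2), \qquad u_2 = \tfrac{1}{\sqrt{6}}(x_1^2 + x_2^2 - 2x_3^2),
\]
in the span of $O_1$, and
\[
v_1 = \tfrac{1}{\sqrt{2}}(x_1 x_3 - x_2 x_3), \qquad v_2 = \tfrac{1}{\sqrt{6}}(2x_1 x_2 - x_1 x_3 - x_2 x_3),
\]
in the span of $O_2$. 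A brief direct check on the generator $(1\,2\,3)$ shows that both pairs transform under the same orthogonal $2\times 2$ representation (rotation by $120^\circ$), confirming they give two aligned copies of the standard representation.

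Reordering the resulting orthonormal basis as $(w_1, w_2, u_1, v_1, u_2, v_2)$ per Corollary \ref{cor:nice-diagonal}, I would compute $T^T Q T$ entrywise, using the $S_3$-invariance of $Q$ to force the relations $Q_{11} = Q_{22} = Q_{33}$, $Q_{44} = Q_{55} = Q_{66}$, $Q_{12} = Q_{13} = Q_{23}$, $Q_{45} = Q_{46} = Q_{56}$, $Q_{14} = Q_{15} = Q_{24} = Q_{26} = Q_{35} = Q_{36}$, and $Q_{16} = Q_{25} = Q_{34}$. The trivial block then follows directly from Proposition \ref{prop:trivial-block}, giving entries $q_{11} + 2q_{12}$, $2q_{14} + q_{16}$, and $q_{44} + 2q_{45}$. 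Evaluating the bilinear form $Q(\cdot,\cdot)$ on $(u_1,u_1), (v_1,v_1), (u_1,v_1)$ and applying the invariance relations yields standard block entries $q_{11} - q_{12}$, $q_{44} - q_{45}$, and $q_{14} - q_{16}$, and the parallel computation on $(u_2,u_2), (v_2,v_2), (u_2,v_2)$ reproduces exactly the same three values. All cross-block entries vanish; for instance $Q(w_1, u_1) = \tfrac{1}{\sqrt{6}}\sum_{i=1}^3 (Q_{i1} - Q_{i2}) = 0$ by symmetry. Positive semidefiniteness of $Q$ is equivalent to PSD-ness of each diagonal block.

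The main technical subtlety is the alignment of the two standard copies so that the two $2\times 2$ standard blocks of $T^T Q T$ coincide exactly, rather than being merely conjugate. This alignment must respect a canonical $S_3$-equivariant identification between the spans of $O_1$ and $O_2$ (up to an overall sign convention) and can be obtained either by the explicit choice above, or by a uniform application of the projection operators $P_{2k}$ in Algorithm \ref{alg:symmetry-adapted-basis} to both copies with the same fixed matrix representation $d^{\ytableausetup{boxsize=.4em}\ydiagram{2,1}}$.
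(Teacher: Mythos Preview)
Your proposal is correct and follows essentially the same approach as the paper: compute the multiplicities via Proposition~\ref{prop:ternary-multiplicities}, invoke Corollary~\ref{dimcor} for the dimension, and produce the block form via a symmetry adapted basis as in Corollary~\ref{cor:nice-diagonal}. The only difference is that the paper simply cites Algorithm~\ref{alg:symmetry-adapted-basis} for the change of basis, whereas you carry out the construction by hand, exhibiting the orthonormal basis $(w_1,w_2,u_1,v_1,u_2,v_2)$ and verifying the block entries directly; this is a more explicit but not a different argument.
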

\begin{proof}
Proposition \ref{prop:ternary-multiplicities} tells us that
the multiplicities of the trivial and standard representations are each two, and that of the alternating representation is zero. By Corollary 
\ref{dimcor} the dimension of $\PSD_6^{S_3}$ is six. Using Algorithm
\ref{alg:symmetry-adapted-basis}, one can compute a $6 \times 6$ change
of basis matrix such that the elements in $\PSD_6^{S_3}$ have
the stated form. 
\end{proof}

The next theorem is our main theorem in this section. 
\begin{theorem}
Let $f\in \R[x, y, z]$ be a smooth symmetric quartic. Then there are precisely $3$ (possibly complex) symmetric matrices $Q$ of rank $3$ such that $f= m(x)^TQm(x)$ and $D(\sigma) Q = QD(\sigma)$ for all $\sigma \in S_3$. 
Moreover, if $f$ is SOS,
there are exactly $2$ such PSD matrices of rank $3$. These correspond
to the two vertices of the two-dimensional symmetry adapted Gram spectrahedron $K_f^{S_3}$. Furthermore, the boundary of $K_f^{S_3}$ is defined by two curves, a parabola and a hyperbola.  
Other than the two vertices, the points along the hyperbola give rank $4$ matrices while those along the parabola are rank $5$ matrices.
\label{thm:symmetric-ternary-quartic-main}
\end{theorem}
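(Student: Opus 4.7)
The plan is to reduce everything to a B\'ezout computation in the two-dimensional affine slice $L_f$ cut out of the symmetry adapted matrix space, followed by a parity argument on the vertices of the spectrahedron.

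First I would use Proposition~\ref{prop:symmetry-adapted-ternary-quartic} to write every symmetry adapted $Q$ as $\mathrm{diag}(A,B,B)$ with $2\times 2$ symmetric blocks
\[
A = \begin{pmatrix} q_{11}+2q_{12} & 2q_{14}+q_{16} \\ 2q_{14}+q_{16} & q_{44}+2q_{45} \end{pmatrix}, \quad B = \begin{pmatrix} q_{11}-q_{12} & q_{14}-q_{16} \\ q_{14}-q_{16} & q_{44}-q_{45} \end{pmatrix},
\]
so that $\mathrm{rank}(Q) = \mathrm{rank}(A) + 2\,\mathrm{rank}(B)$. The only way to hit rank $3$ is $\mathrm{rank}(A)=\mathrm{rank}(B)=1$, i.e., $\det A = \det B = 0$ with both blocks nonzero. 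A symmetric ternary quartic has $4$ coefficients (one per $S_3$-orbit of degree-$4$ monomials), and the linear map from the six parameters $(q_{11},q_{12},q_{14},q_{16},q_{44},q_{45})$ onto the coefficients is surjective, so $L_f$ meets the symmetry adapted plane in a $2$-dimensional affine subspace. Solving for $q_{11}, q_{14}, q_{44}, q_{45}$ in terms of the coefficients leaves $u := q_{12}$ and $v := q_{16}$ as free parameters.

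The key calculation is to substitute into $\det A$ and $\det B$ and record the degree-$2$ part in $(u,v)$. A direct expansion yields
\[
\det A = -(2u+v)^2 + (\text{lower order in } u,v), \qquad \det B = (2u+v)(u-v) + (\text{lower order in } u,v).
\]
Thus $\{\det A = 0\}$ is a parabola (rank-$1$, sign-definite quadratic part) and $\{\det B = 0\}$ is a hyperbola (indefinite quadratic part splitting into two real linear factors); smoothness of $f$ rules out degeneration into a double or reducible conic. Applying B\'ezout in $\mathbb{P}^2$, the two conics meet in $4$ points counted with multiplicity. The parabola has its unique point at infinity at $[1\!:\!-2\!:\!0]$ (where $2u+v=0$), while the hyperbola has points at infinity $[1\!:\!-2\!:\!0]$ and $[1\!:\!1\!:\!0]$. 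At the shared point $[1\!:\!-2\!:\!0]$ the parabola is tangent to the line at infinity while the hyperbola is transverse to it, so the two conics cross transversely there with multiplicity exactly $1$. The remaining $3$ intersections are affine, proving the rank-$3$ count.

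For the SOS statement, Corollary~\ref{corollary:Kf-nonempty} gives $K_f^{S_3} \neq \emptyset$, and for smooth $f$ this set is a compact convex planar region. Its interior consists of matrices with $A, B \succ 0$, so the boundary lies in $\{\det A = 0\} \cup \{\det B = 0\}$. A generic point of the parabola arc has $\mathrm{rank}(A)=1$ and $\mathrm{rank}(B)=2$, giving $\mathrm{rank}(Q)=5$; a generic point of the hyperbola arc has $\mathrm{rank}(A)=2$ and $\mathrm{rank}(B)=1$, giving $\mathrm{rank}(Q)=4$; the vertices are exactly the intersection points of the two conics, where $\mathrm{rank}(Q)=3$. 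The boundary is a simple closed curve switching between the two conics only at vertices, so the number of vertices must be even. It cannot be $0$, since neither the parabola nor any single branch of the hyperbola can enclose a bounded region, and by the B\'ezout count it cannot exceed $3$, so it equals $2$.

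The main obstacle is really the explicit identification of the two quadratic forms $-(2u+v)^2$ and $(2u+v)(u-v)$ and the observation that they share the linear factor $2u+v$: this shared factor is what forces the two conics through a common point at infinity and drops the affine intersection count from $4$ to $3$. Once these two quadratics are in hand, the classification as parabola/hyperbola, the projective intersection analysis, and the parity argument for the vertices are all routine.
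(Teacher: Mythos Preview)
Your proposal is correct and follows essentially the same route as the paper: reduce to the two $2\times 2$ blocks $A,B$, parametrize $L_f$ by $(q_{12},q_{16})$, identify $\det A=0$ and $\det B=0$ as a parabola and a hyperbola via their leading forms $-(2u+v)^2$ and $(2u+v)(u-v)$, and use B\'ezout together with the shared point $[1\!:\!-2\!:\!0]$ at infinity to get exactly three affine rank-$3$ solutions.

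Two points of comparison are worth noting. First, your transversality check at $[1\!:\!-2\!:\!0]$ (parabola tangent to the line at infinity, hyperbola transverse) is more explicit than the paper, which simply observes that the homogeneous parts share a single common root and concludes there are three affine intersections; your version makes the multiplicity-one claim honest. Second, for the ``exactly two PSD'' assertion the paper argues geometrically by describing $K_1$ and $K_2$ separately (using the tangent lines $a+2q_{12}=0$, etc.) and then intersecting the parabola region with one branch of the hyperbola, essentially reading off two crossings from a picture. Your parity argument---the boundary of the compact convex $K_f^{S_3}$ is a simple closed curve alternating between the two conics, hence an even number of vertices, and neither conic alone bounds a compact region---is a cleaner and more robust substitute for that pictorial step. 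One small caveat: the claim that smoothness of the quartic $f$ rules out degeneration of the conics is not entirely automatic (the paper notes the parabola degenerates to a double line exactly when $a+2b+c+d=0$), so you should either check that this locus is contained in the singular quartics or simply invoke genericity there as the paper does.
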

\begin{proof}
Let 
$$f(x_1,x_2,x_3) = a\sum_i x_i^4 + b\sum_{i\neq j} x_i^3x_j + c\sum_{i < j}x_i^2x_j^2 + d\sum_{i \neq j \neq k, j < k}x_i^2x_jx_k$$
where $a,b,c,d$ are fixed coefficients. Writing $f = m(x)^TQm(x)$
and equating coefficients we get that 
$a = q_{11}$, $b = 2q_{14}$, $c = 2q_{12} + q_{44}$, and $d = 2q_{16} + 2q_{45}$. If we plug these into the block-diagonalized 
matrix in Proposition \ref{prop:symmetry-adapted-ternary-quartic}
we see that the symmetry adapted Gram spectrahedron $K_f^{S_3}$
consists of positive semidefinite matrices of the form 
$$\begin{pmatrix}
a + 2q_{12} & b + q_{16} & 0 & 0 & 0 & 0 \\
b + q_{16} & c + d - 2q_{12} - 2q_{16} & 0 & 0 & 0 & 0 \\
0 & 0 & a - q_{12} & \frac{b}{2} - q_{16} & 0 & 0 \\
0 & 0 & \frac{b}{2} - q_{16} &  c - \frac{d}{2} - 2q_{12}  + q_{16} & 0 & 0 \\
0 & 0 & 0 & 0 & a - q_{12} & \frac{b}{2} - q_{16} \\
0 & 0 & 0 & 0 & \frac{b}{2} - q_{16} & c - \frac{d}{2} - 2q_{12}  + q_{16}
\end{pmatrix}.$$
Hence, $K_f^{S_3}$ is the intersection of two spectrahedra:
\begin{equation}\label{equ:ternary_quartic_trivial_block}
    K_1 \, = \, \{(q_{12},q_{16}) : \begin{pmatrix}
    a + 2q_{12} & b + q_{16} \\
    b + q_{16} & c + d - 2q_{12} - 2q_{16}
    \end{pmatrix} \succeq 0\}
\end{equation}
\begin{equation}\label{equ:ternary_quartic_standard_block}
    K_2 \, = \, \{(q_{12},q_{16}) :  \begin{pmatrix}
    a - q_{12} & \frac{b}{2} - q_{16} \\
    \frac{b}{2} - q_{16} & c - \frac{d}{2} - 2q_{12}  + q_{16}
    \end{pmatrix} \succeq 0\}
\end{equation}
To prove the first statement in our theorem we ignore the condition
that these matrices need to be positive semidefinite. The 
above $6 \times 6$ matrix has rank three if and only if the two 
$2 \times 2$ matrices have rank one. 
Thus their determinants must be zero.
This gives us two quadratics in the variables $q_{12}$ and $q_{16}$ which we homogenize using a new variable $q$:
\begin{align*}
    p_1 &= -4q_{12}^2 -4q_{12}q_{16} - q_{16}^2 + q((-2a + 2c + 2d)q_{12} + (- 2a - 2b)q_{16}) + q^2(ac + ad-b^2)\\
    p_2 &= 2q_{12}^2 -q_{12}q_{16} - q_{16}^2 + q((-2a- c + d/2)q_{12} + (a + b)q_{16}) + q^2(ac - ad/2 -b^2/4).
\end{align*}
By Bezout's theorem, the projective plane curves defined
by $p_1$ and $p_2$ intersect at exactly $4$ complex points. 
Setting $q=0$, we consider the solutions to the equations
\begin{align*}
    0 &= -4q_{12}^2 -4q_{12}q_{16} - q_{16}^2 = -(2q_{12}+q_{16})^2\\
    0 &= 2q_{12}^2 -q_{12}q_{16} - q_{16}^2 = (2q_{12} + q_{16})(q_{12} - q_{16}).
\end{align*}
We see that there is only one solution, giving us $[q_{12} : q_{16}:  q] = [1:-2:0]$ as the intersection point at the line at infinity. The remaining three points are obtained by setting  $q=1$ which gives us back the determinants of the two submatrices. This proves the first statement.

Next we consider the spectrahedra $K_1$ and $K_2$.
For fixed $a,b,c,d$, $K_1$ is defined by the inequalities
\begin{align*}
    (a + 2q_{12})(c + d - 2q_{12} - 2q_{16}) - (b + q_{16})^2 &\geq 0\\
    a + 2q_{12} &\geq 0\\
    c + d - 2q_{12} - 2q_{16} &\geq 0. 
\end{align*}
The first quadratic can be rewritten as
$$\begin{pmatrix}q_{12} & q_{16} & 1 \end{pmatrix}
\begin{pmatrix}
-4 & -2 & -a+c+d \\
-2 & -1 & -a-b  \\
-a+c+d & -a-b & ac + ad-b^2
\end{pmatrix}
\begin{pmatrix}q_{12} \\ q_{16} \\ 1 \end{pmatrix} \geq 0.$$
Since the determinant of the upper left $2\times2$ matrix is zero, the curve defined by this quadric is a parabola \cite[Table 5.3]{G1998}.  
Moreover, the lines $a + 2q_{12} = 0$ and $c + d - 2q_{12} - 2q_{16} = 0$ are tangent to the curve at the points 
$(-\frac{a}{2},-b)$ and $(b + \frac{c}{2} + \frac{d}{2},-b)$ respectively.
As we vary $a,b,c,d$, the region defined by the first inequality moves between only two of the four connected components in the complement of the two lines as illustrated below:
\begin{figure}[H]
	\centering
		\includegraphics[width=0.8\textwidth]{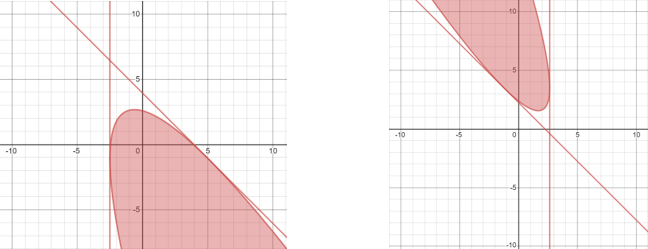}
	\label{fig:Parabolas}
\end{figure}
Moreover, by the last two inequalities, $K_1$ is nonempty when the parabola is in the bottom region, as in the left most figure.  
It is worth noting that this is the generic case and that there is one more possibility.  
If the determinant of the above matrix is zero, i.e., $(a + 2b + c + d)^2 = 0$,
then the quadric defines a double line \cite[Table 5.3]{G1998}, $(a - c - d + 4q_{12} + 2q_{16})^2 = 0$. 
This double line intersects the lines $a + 2q_{12} = 0$ and $c + d - 2q_{12} - 2q_{16} = 0$ at the same point.  Thus $K_1$ is a ray, starting from this intersection point and going out to $(\infty, -\infty)$:
\begin{figure}[H]
	\centering
		\includegraphics{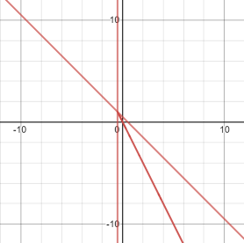}
	\label{fig:ParabolasLine}
\end{figure}
We can do a similar analysis of $K_2$ which is defined by the inequalities
\begin{align*}
    (a - q_{12})(c - \frac{d}{2} - 2q_{12}  + q_{16}) - (\frac{b}{2} - q_{16})^2 &\geq 0\\
    a - q_{12} &\geq 0\\ 
    c - \frac{d}{2} - 2q_{12}  + q_{16} &\geq 0
\end{align*}
We rewrite the first quadratic as
$$\begin{pmatrix}q_{12} & q_{16} & 1 \end{pmatrix}
\begin{pmatrix}
2 & -\frac{1}{2} & -a -\frac{c}{2} + \frac{d}{4} \\
-\frac{1}{2} & -1 & \frac{a}{2}+\frac{b}{2}  \\
-a -\frac{c}{2} + \frac{d}{4} & \frac{a}{2}+\frac{b}{2} & -\frac{b^2}{4} + ac - \frac{ad}{2}
\end{pmatrix}
\begin{pmatrix}q_{12} \\ q_{16} \\ 1 \end{pmatrix}.$$
This is a hyperbola (or a pair of crossing lines) because the leading $2 \times 2$ minor is nonzero \cite[Table 5.3]{G1998}.
Again the two additional inequalities define lines that are tangent to the curve and give $K_2$ as the left most component of the hyperbola:
\begin{figure}[H]
	\centering
		\includegraphics{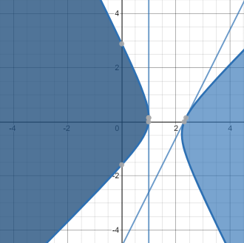}
	\label{fig:Hyp}
\end{figure}
We now see that for a generic symmetric ternary quartic that is SOS, the 
symmetry adapted Gram spectrahedron $K_f^{S_3}$ is the intersection of the parabola and one component of the hyperbola.
\begin{figure}[H]
	\centering
		\includegraphics{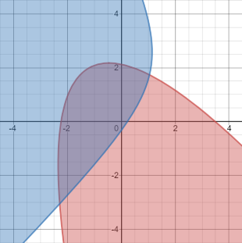}
	\label{fig:QuarticTernaryGramSpec}
\end{figure}
The two points in $K_f^{S_3}$ where these curves intersect are 
the two vertices corresponding to rank three matrices.   
If we move along the boundary defined by the parabola, we get rank $5$ matrices, because on these points the matrix block corresponding to the parabola has rank $1$ while the two blocks corresponding to the hyperbola are each rank $2$.  
A similar argument shows that matrices along the hyperbola have rank $4$.
\end{proof}

\begin{remark}
Theorem \ref{thm:symmetric-ternary-quartic-main} illustrates one of three cases, namely, the case where $f$ is SOS when the two quadrics  defined by the determinants of the matrices in $K_1$ and $K_2$ intersect at three real points, two of which give PSD matrices.  
If $f$ is not SOS, then we have two additional situations.  
The first is that the curves only intersect at one real point and two complex points, and the second case is when the curves have three real intersection points. In the latter, even though there are three real points, none of  them correspond to a PSD matrix.

\begin{figure}[H]
    \centering
    \includegraphics[width=0.3\textwidth]{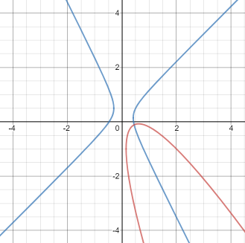} \hspace{1cm} \includegraphics[width=0.30\textwidth]{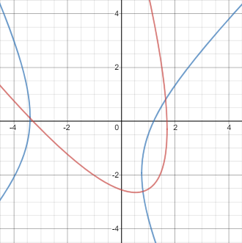}
    \label{fig:ParaHypOneReal}
\end{figure}

\end{remark}

As mentioned above, the Gram spectrahedron of an SOS ternary quartic $f$ has $8$ vertices of rank three.
Let the \emph{Steiner graph} be the graph on these vertices whose edges represent edges of the Gram spectrahedron.
For a generic SOS ternary quartic the Steiner graph is $K_4 \sqcup K_4$, the disjoint union of two complete graphs on $4$ vertices \cite{PSV2011}. Moreover, the matrices along those edges are of rank at most $5$.
It is not known whether the Steiner graph  coincides with all edges of the Gram spectrahedron. However, it is clear from Theorem \ref{thm:symmetric-ternary-quartic-main} that, generically, there are no edges of the symmetry adapted Gram spectrahedron contributing to the edges
of the Steiner graph. 

\begin{cor}
The Steiner graph of the symmetry adapted Gram spectrahedron of a generic symmetric SOS ternary quartic $f$ is the disjoint union of two vertices.
\end{cor}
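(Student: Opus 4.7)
The plan is to deduce the corollary directly from the geometric picture established in Theorem \ref{thm:symmetric-ternary-quartic-main}. By that theorem, for a generic symmetric SOS ternary quartic $f$, the set $K_f^{S_3}$ is a two-dimensional convex region in the $(q_{12},q_{16})$-plane whose boundary is the union of one arc of a (genuine, irreducible) parabola and one arc of a (genuine, irreducible) hyperbola branch, meeting transversally at exactly the two rank-$3$ vertices $V_1,V_2$. So the Steiner graph has exactly two vertices, and the task is to show there is no edge between them.

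By definition, an edge of the Steiner graph between $V_1$ and $V_2$ would be a $1$-dimensional face of $K_f^{S_3}$ containing both $V_1$ and $V_2$ in its closure. Since a face of a convex set is itself convex and here our convex set is bounded and planar, any $1$-dimensional face must be a straight line segment lying in the boundary $\partial K_f^{S_3}$. Hence it suffices to prove that the closed segment $\overline{V_1V_2}$ is not contained in $\partial K_f^{S_3}$.

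I would argue this as follows. The boundary $\partial K_f^{S_3}$ is contained in the union of the zero sets of the two $2\times 2$ determinants from (\ref{equ:ternary_quartic_trivial_block}) and (\ref{equ:ternary_quartic_standard_block}), that is, in the union of the parabola and the hyperbola. For a generic $f$, both of these conics are irreducible, so by Bezout neither contains a line. Therefore any line, and in particular the line through $V_1$ and $V_2$, meets each conic in at most $2$ points; those $4$ points are accounted for by $V_1,V_2$ themselves (each lying on both conics). Consequently the relative interior of $\overline{V_1V_2}$ is disjoint from both conics and hence from $\partial K_f^{S_3}$, so $\overline{V_1V_2}$ meets the interior of $K_f^{S_3}$ and is not a face. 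This shows no $1$-dimensional face joins $V_1$ to $V_2$, so the Steiner graph is just the two isolated vertices.

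The main thing to be careful about is the meaning of ``generic'': the above Bezout argument uses that both conics are irreducible and that $V_1\ne V_2$, which rules out exactly the degenerate configurations singled out in Theorem \ref{thm:symmetric-ternary-quartic-main} (for example, the coincidence $(a+2b+c+d)^2=0$ that collapses the parabola to a double line, or the cases where the two conics become tangent so $V_1=V_2$). Outside of this codimension-one bad locus the argument goes through unchanged, and the corollary follows.
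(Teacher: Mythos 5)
Your proof is correct, and it takes a genuinely different route from the paper's. The paper leverages the result from \cite{PSV2011} that the Steiner graph of the full Gram spectrahedron $K_f$ is $K_4 \sqcup K_4$ and that matrices on its edges have rank at most $5$: it argues by contradiction that if the two vertices $V_1,V_2$ of $K_f^{S_3}$ were in the same $K_4$, then the corresponding edge of $K_f$ would lie inside $K_f^{S_3}$ (convex combinations of invariant PSD matrices stay invariant and PSD), yet the interior of that segment must sit in the interior of $K_f^{S_3}$, where all matrices have rank $6$, contradicting the rank bound on Steiner edges. Your argument dispenses entirely with the $K_4 \sqcup K_4$ classification and works intrinsically inside the two-dimensional spectrahedron $K_f^{S_3}$: you observe that any edge of the Steiner graph joining $V_1$ and $V_2$ would force the chord $\overline{V_1 V_2}$ into $\partial K_f^{S_3}$, and then use Bezout plus the generic irreducibility of the parabola and hyperbola to rule that out. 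The paper's version is shorter given the background it cites; yours is self-contained and, notably, establishes directly that $K_f^{S_3}$ has no one-dimensional face joining the two vertices (as opposed to establishing nonadjacency in the bigger Steiner graph of $K_f$ and inferring the statement for $K_f^{S_3}$). Your caveat about the meaning of ``generic'' correctly flags exactly the degenerations (double-line parabola, crossing-line hyperbola, tangent conics with $V_1=V_2$) that the paper also identifies, so the argument is complete.
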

\begin{proof}
By Theorem \ref{thm:symmetric-ternary-quartic-main}, $K_f^{S_n}$ has two vertices.  Thus, either both vertices are in one complete graph $K_4$ or each graph contains one of the two vertices.  If it were the former, then $K_f^{S_n}$ would also contain the corresponding edge.
This is, however, the interior of the symmetry adapted Gram spectrahedron and all matrices there are rank 6.  Thus no such edge of matrices of rank 5 exists, i.e. the vertices are each in different complete graphs.
\end{proof}

The vertices of the Gram spectrahedron of $f$ or of its symmetry adapted
version when $f$ is $G$-invariant  
are not the end of the 
story. The boundary of these spectrahedra are very interesting and
the work to unearth it is only starting. In the symmetric ternary 
quartics case, the boundary consists of the union of a piece of a
parabola and a piece of a hyperbola. It is an interesting question 
how a typical SOS decomposition would look like 
if we used an SDP solver for $K_f^{S_n}$. It is not difficult to run
simulations. Below are the results of such computations. 
We generated random symmetric ternary quartics and ran SDPs until we found $16$ that were SOS. For each of these $16$ polynomials we randomly generated 1000 objective functions and ran an SDP for each of them. The ranks of the corresponding 1000 optimal SOS matrices are shown in 
Figure \ref{fig:ternary-quartic-simulation}.

\begin{figure}[H] \label{fig:ternary-quartic-simulation}
\begin{center}
    \includegraphics[width=0.8\textwidth]{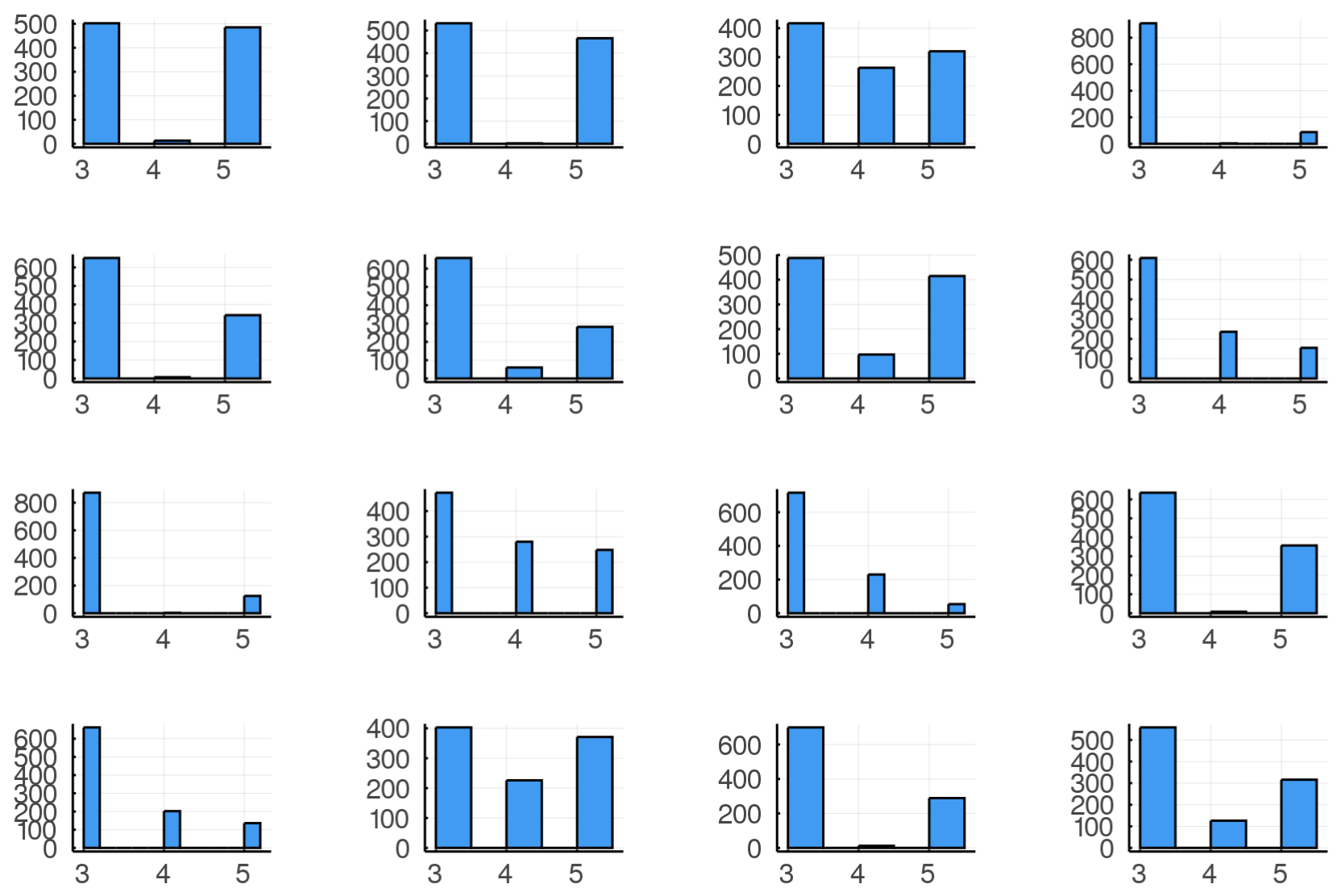}
\end{center}
\caption{Distribution of ranks for SOS decomposition of 16 symmetric ternary
quartics} 
\end{figure}

\begin{remark}
Computational data can provide some insight about the normal fan of the symmetry adapted Gram spectrahedron. In the generic case for a positive ternary quartic, the normal fan will be something like the following:
\begin{figure}[H]
	\centering
		\includegraphics[width=0.25\textwidth]{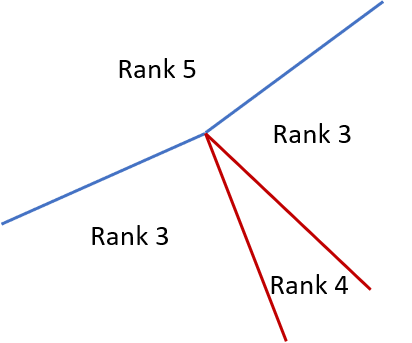}
	\label{fig:NormalFan}
\end{figure} 
\noindent Hence a random cost function is more likely to return a rank three or a rank five solution than a rank 4 solution, as reflected by the data.

\end{remark}

We close this section with a characterization of all symmetric 
ternary quartics that are SOS. First we provide necessary linear 
conditions on the coefficients of such a polynomial. Then we report
on a full characterization in a form which can be used
to certify whether a symmetric ternary quartic is SOS.

\begin{prop}  \label{prop:ternary_quartic_necessary_inequalities}
If a symmetric ternary quartic
$$f(x_1,x_2,x_3) = a\sum_i x_i^4 + b\sum_{i\neq j} x_i^3x_j + c\sum_{i < j}x_i^2x_j^2 + d\sum_{i \neq j \neq k, j < k}x_i^2x_jx_k$$
with real coefficients $a,b,c,d$ is SOS, then
\begin{enumerate}
    \item[a)] $a \geq 0 $,
    \item[b)] $a + c \geq 0$,
    \item[c)] $a + 2b + c + d \geq 0$.
\end{enumerate}
\end{prop}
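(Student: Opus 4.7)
The plan is to exploit the fact that an SOS polynomial is pointwise nonnegative on $\R^3$, and then extract each linear inequality by evaluating $f$ at a carefully chosen test point, or at a nonnegative combination of such points when a single evaluation is insufficient.

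First, I would obtain (a) by evaluating at $(1,0,0)$: only the $x_1^4$ term survives, so $f(1,0,0) = a$, which must be nonnegative. For (c), I would evaluate at the all-ones vector $(1,1,1)$: every monomial of $f$ contributes $+1$, so a direct count gives $f(1,1,1) = 3a + 6b + 3c + 3d = 3(a+2b+c+d)$, which must be nonnegative.

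The interesting case is (b), since no single point evaluation yields $a+c \geq 0$: for instance $f(1,1,0) + f(1,-1,0) = 4a + 2c$ only delivers the weaker $2a+c \geq 0$. I would instead average $f$ over the sign group $\{\pm 1\}^3$. Because $f$ is even-degree homogeneous one has $f(-v) = f(v)$, and combined with $S_3$-invariance the eight sign patterns collapse into two orbits: the two patterns $(\pm 1, \pm 1, \pm 1)$ each give $f(1,1,1) = 3(a+2b+c+d)$, while the six patterns with mixed signs each give $f(1,1,-1) = 3a - 2b + 3c - d$ (via a short direct computation). Summing,
\begin{equation*}
    \sum_{\epsilon \in \{\pm 1\}^3} f(\epsilon_1,\epsilon_2,\epsilon_3) \;=\; 2f(1,1,1) + 6f(1,1,-1) \;=\; 24(a+c),
\end{equation*}
so $a+c \geq 0$ since each summand on the left is nonnegative. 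Equivalently, $\tfrac{1}{12}[f(1,1,1) + 3f(1,1,-1)] = a+c$.

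The main conceptual step is recognizing that (b) requires averaging over the sign group rather than a single evaluation; the other two inequalities are one-line evaluations. I do not expect any serious obstacle. An alternative route would be to read off (a) and (b) from the nonnegativity of the diagonal entries of the two blocks in Proposition~\ref{prop:symmetry-adapted-ternary-quartic} (the combinations $d_1 + 2d_3 = 3a$ and $3d_1 + d_2 + 2d_4 = 3(a+c)$ eliminate the free parameters $q_{12},q_{16}$), but since the coefficient $b$ only appears off-diagonal, (c) is not obtainable this way, making the pointwise-evaluation approach more uniform.
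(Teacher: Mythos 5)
Your proof is correct, and it takes a genuinely different route from the paper's. The paper obtains (a) and (b) by projecting the linear inequalities coming from the four diagonal entries of the $2\times 2$ blocks in \eqref{equ:ternary_quartic_trivial_block} and \eqref{equ:ternary_quartic_standard_block}, and obtains (c) by quantifier elimination applied to the defining inequalities of $K_1$; it thus leans entirely on the symmetry-adapted block structure already developed in Proposition~\ref{prop:symmetry-adapted-ternary-quartic}. Your argument instead uses only pointwise nonnegativity of $f$: (a) is $f(1,0,0)\ge 0$, (c) is $\tfrac13 f(1,1,1)\ge 0$, and (b) is the average $\tfrac{1}{12}\bigl[f(1,1,1)+3f(1,1,-1)\bigr]=a+c\ge 0$ (I checked the arithmetic; all three identities are correct). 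This buys you two things: the proof is self-contained and elementary, not requiring the symmetry-adapted decomposition at all; and it actually proves the stronger statement that (a)--(c) hold for any nonnegative symmetric ternary quartic, not just SOS ones. (For ternary quartics these classes coincide by Hilbert's theorem, so the conclusions agree, but your method would generalize to settings where they differ.) What the paper's approach buys is uniformity with the rest of Section~\ref{section:ternary-symmetric} and a template for extracting \emph{sufficient} conditions, which is what the authors go on to do via quantifier elimination; point evaluations alone can never certify sufficiency. One small remark: your closing observation that (c) cannot be read off from the diagonal entries of the blocks is accurate as stated, but it is still obtainable from the block form without full quantifier elimination, e.g.\ by pairing the trivial block with the vector $(1,1)^T$, which yields $(a+2q_{12})+2(b+q_{16})+(c+d-2q_{12}-2q_{16})=a+2b+c+d\ge 0$ directly.
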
   
\begin{proof}
The first two conditions follow from projecting 
the polyhedron defined by linear inequalities obtained 
from the four diagonals in (\ref{equ:ternary_quartic_trivial_block})
and (\ref{equ:ternary_quartic_standard_block}).
The third condition comes from applying quantifier elimination on the defining inequalities of $K_1$ in (\ref{equ:ternary_quartic_trivial_block}).
\end{proof}

\begin{ex}
As mentioned, the conditions in Proposition \ref{prop:ternary_quartic_necessary_inequalities} are not sufficient.  Let $a=1$, $b=2$, $c=1$, and $d=0$.  Certainly, $a$, $a+c$, and $a + 2b + c + d$ are all nonnegative, but the corresponding polynomial,
$$f(x_1,x_2,x_3) = \sum_i x_i^4 + 2\sum_{i\neq j} x_i^3x_j + \sum_{i < j}x_i^2x_j^2$$
is not SOS.  In particular, $f(1,-2,1) = -9$.
\end{ex}

Additional conditions are not easy to find.  The task is to project 
the spectrahedron $K_1 \cap K_2$ onto the $(a,b,c,d)$-space. 
Given that this $6$-dimensional spectrahedron is a cone, one method is to consider the projection of an affine slice.  We do this for $q_{16} = 1$.
Then for any $(a,b,c,d)$ in this projection, the corresponding polynomial is SOS and so is any positive scaling of that polynomial.  
However, for a complete description, we must also consider the projection when $q_{16} = 0$ and $q_{16} = -1$.
In this way, we can find an exact description (up to positive scaling) of the semialgebraic set defined by the projection of the three affine slices when $q_{16} = 1$, $q_{16} = 0$, and $q_{16} = -1$ using quantifier elimination. The result is the union of $158$ basic semialgebraic sets, each defined with polynomial inequalities and equations up to degree $4$. 
We encourage the interested reader to visit  
\begin{center}
    \href{https://math.berkeley.edu/~ishankar/SOSSymTernQuartic.html}{https://math.berkeley.edu/$\sim$ishankar/SOSSymTernQuartic.html}
\end{center}
for a code that will check if a given point $(a,b,c,d)$ is contained in this set, and thus are the coefficients of an SOS polynomial. There one may also see the full description of the projected slices of the spectrahedron. 

\subsection{Symmetric Ternary Sextics}

Here $V = \R[x_1,x_2,x_3]_3$ and we consider symmetric ternary sextics.

\begin{prop} \label{prop:PSD-ternary-sextics}
The symmetry adapted PSD cone $\PSD_{10}^{S_3}$ consists of  $10 \times 10$ symmetric matrices of the form 
$$\begin{pmatrix}
					Q^{\ytableausetup{boxsize=.4em}\ydiagram{3}}&          & &\\
								 & Q^{\ytableausetup{boxsize=.4em}\ydiagram{2,1}}& & \\
								 &          &Q^{\ytableausetup{boxsize=.4em}\ydiagram{2,1}} & \\
								 &          & & Q^{\ytableausetup{boxsize=.4em}\ydiagram{1,1,1}}\\			
					\end{pmatrix}$$
					where each
					$$Q^{\ytableausetup{boxsize=.4em}\ydiagram{3}} = \begin{pmatrix}
					q_{11}+2q_{12} & \sqrt{2}(q_{14}+q_{16}+q_{18}) & \sqrt{3}q_{110} \\
					\sqrt{2}(q_{14}+q_{16}+q_{18}) & q_{44}+q_{45}+q_{46}+2q_{47}+q_{49} & \sqrt{6}q_{410} \\
					\sqrt{3}q_{110} & \sqrt{6}q_{410} & q_{1010} \\
					\end{pmatrix} $$
					$$Q^{\ytableausetup{boxsize=.4em}\ydiagram{2,1}} = \begin{pmatrix}  
						q_{11}-q_{12} & \frac{\sqrt{2}}{2}(2q_{14}-q_{16}-q_{18}) & \frac{\sqrt{6}}{2}(q_{16}-q_{18})  \\ 
					 \frac{\sqrt{2}}{2}(2q_{14}-q_{16}-q_{18}) & q_{44} + q_{45} - \frac{1}{2}q_{46} - q_{47} - \frac{1}{2}q_{49} &	\frac{\sqrt{3}}{2}(q_{46}-q_{49})  \\ 
					 \frac{\sqrt{6}}{2}(q_{16}-q_{18}) & \frac{\sqrt{3}}{2}(q_{46}-q_{49}) &  q_{44} - q_{45} + \frac{1}{2}q_{46} - q_{47} + \frac{1}{2}q_{49}  \\
						\end{pmatrix} $$
					$$Q^{\ytableausetup{boxsize=.4em}\ydiagram{1,1,1}} = q_{44} - q_{45} - q_{46} + 2q_{47} - q_{49}$$
is positive semidefinite.
\end{prop}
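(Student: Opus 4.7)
The plan is to combine Proposition \ref{prop:ternary-multiplicities} with the structural result in Corollary \ref{cor:characterize-psd-cone} and then compute each block explicitly. First I would evaluate the quasi-polynomials at $d = 3$ to read off the multiplicities: $m_{(3)} = Q(3) = 3$, $m_{(2,1)} = P(2) = 3$, and $m_{(1,1,1)} = Q(0) = 1$, checking that $3 \cdot 1 + 3 \cdot 2 + 1 \cdot 1 = 10$ matches $\dim V$. All three irreducibles of $S_3$ are of type 2, so Corollary \ref{cor:characterize-psd-cone} applies and guarantees a symmetry adapted basis in which an arbitrary $Q \in \PSD_{10}^{S_3}$ becomes block diagonal with one $3 \times 3$ block $Q^{\ytableausetup{boxsize=.4em}\ydiagram{3}}$, two identical copies of a $3 \times 3$ block $Q^{\ytableausetup{boxsize=.4em}\ydiagram{2,1}}$, and one $1 \times 1$ block $Q^{\ytableausetup{boxsize=.4em}\ydiagram{1,1,1}}$, each of which must be PSD. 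The remaining task is to express the entries of each block in terms of the monomial basis entries $q_{ij}$.

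For the trivial block, I would invoke Proposition \ref{prop:trivial-block} using the three $S_3$-orbits of degree 3 monomials, namely $O(x_1^3)$, $O(x_1^2 x_2)$, and $O(x_1 x_2 x_3)$, with sizes $3$, $6$, and $1$. The displayed column-sum and row-sum formulas collapse immediately once one applies the invariance identities $q_{ij} = q_{\sigma(i)\sigma(j)}$ implied by $D(\sigma)^T Q D(\sigma) = Q$: this produces the diagonal entries $q_{11} + 2 q_{12}$, $q_{44} + q_{45} + q_{46} + 2 q_{47} + q_{49}$, $q_{1010}$, and the off-diagonals $\sqrt{2}(q_{14}+q_{16}+q_{18})$, $\sqrt{3}\, q_{110}$, $\sqrt{6}\, q_{410}$, where the scaling factors match $s_i s_j$ in Proposition \ref{prop:trivial-block}.

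For the alternating block I would run the first step of Algorithm \ref{alg:symmetry-adapted-basis} with $d^{(1,1,1)}(g) = [\mathrm{sign}(g)]$. The projector $\pi^{(1,1,1)} = \sum_g \mathrm{sign}(g) D(g)$ has one-dimensional image spanned by the antisymmetrization
\begin{equation*}
v \;=\; x_1^2 x_2 - x_1^2 x_3 - x_1 x_2^2 + x_1 x_3^2 + x_2^2 x_3 - x_2 x_3^2,
\end{equation*}
which is supported entirely on the second orbit. After normalizing $v$ and computing $v^T Q v$, the 15 off-diagonal pairs group into exactly four $S_3$-orbits (in particular $q_{47} = q_{48}$), and the sums collapse to $q_{44} - q_{45} - q_{46} + 2 q_{47} - q_{49}$. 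For the standard block I would fix a real orthogonal $2 \times 2$ matrix realization $d^{(2,1)}(g)$, compute $\pi^{(2,1)} = \sum_g d^{(2,1)}_{11}(g^{-1}) D(g)$ and select three linearly independent columns (one associated to each orbit, since the multiplicity equals the number of orbits), orthonormalize via the invariant inner product, propagate with $P_{(2,1),2} = \tfrac{1}{3} \sum_g d^{(2,1)}_{12}(g^{-1}) D(g)$ to generate the second basis vectors, and read off the Gram entries of $Q$ against this basis. The invariance relations again collapse the long sums into the compact displayed formulas with coefficients $\tfrac{\sqrt{2}}{2}$, $\tfrac{\sqrt{6}}{2}$, $\tfrac{\sqrt{3}}{2}$.

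The main obstacle is the standard block: the scalar factors and relative signs depend on the particular choice of orthogonal $d^{(2,1)}(g)$ and on the ordering used during Gram-Schmidt, so reproducing the stated expressions is a matter of delicate bookkeeping rather than new ideas. A convenient check is to verify a posteriori that the change of basis matrix $T$ assembled from all three isotypic components simultaneously diagonalizes every $D(g)$ into the form required by Corollary \ref{cor:nice-diagonal}; once this is confirmed, $T^T Q T$ must have the announced block structure, and the polynomial identities derived from $D(\sigma)^T Q D(\sigma) = Q$ force the block entries to take the displayed form.
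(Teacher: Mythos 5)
Your proposal is correct and follows essentially the same route as the paper's (much terser) proof: read off the multiplicities $(3,3,1)$ from Proposition \ref{prop:ternary-multiplicities}, invoke Corollary \ref{cor:characterize-psd-cone} for the block structure, and compute the explicit change of basis from Algorithm \ref{alg:symmetry-adapted-basis} (with Proposition \ref{prop:trivial-block} available as a shortcut for the trivial block). The additional detail you supply for the alternating block (orbit analysis giving $q_{44}-q_{45}-q_{46}+2q_{47}-q_{49}$) and the check $3\cdot 1 + 3\cdot 2 + 1\cdot 1 = 10$ are accurate and just flesh out the bookkeeping the paper leaves implicit.
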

\begin{proof}
The multiplicities of the trivial, standard, and alternating irreducible representations are three, three, and one, respectively. Algorithm
\ref{alg:symmetry-adapted-basis} provides a change of basis matrix $T$
such that every positive semidefinite matrix $Q=(q_{ij})$ that commutes
with $D(\sigma)$ for $\sigma \in S_3$ is of the above form after computing $T^{T} Q T$.
\end{proof}
It has been proved by Scheiderer \cite[Corollary 3.5]{Scheiderer2017sumofsquareslengthofrealforms} that every generic ternary sextic that is SOS admits a representation using four squares; in other words, the corresponding Gram spectrahedron has extreme rays consisting of matrices of rank $4$. Our main theorem in this section establishes four as the minimal rank for generic symmetric ternary sextics that are SOS using the technology of Gr\"obner bases.  
\begin{theorem}  \label{thm:symmetric-ternary-sextic-main}
Let $f \in \R[x_1,x_2,x_3]_6$ be a generic symmetric polynomial. If $f$ is SOS, the symmetry adapted Gram spectrahedron has extreme points consisting
of matrices of rank $4$.
\end{theorem}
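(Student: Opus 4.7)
The plan is to combine the block decomposition of Proposition~\ref{prop:PSD-ternary-sextics} with a dimension count across all configurations of block ranks, supplemented by Gröbner basis certificates. By Corollary~\ref{cor:characterize-psd-cone} every $Q\in\PSD_{10}^{S_3}$ is block-diagonal with a $3\times 3$ trivial block $Q^{(3)}$, two identical copies of a $3\times 3$ standard block $Q^{(2,1)}$, and a $1\times 1$ alternating block $Q^{(1,1,1)}$; writing $r_t,r_s,r_a$ for the ranks of these three blocks,
\[
\mathrm{rank}(Q) = r_t + 2r_s + r_a.
\]
The space of symmetric ternary sextics is $7$-dimensional, and the linear map $Q \mapsto m(x)^T Q m(x)$ sends $\PSD_{10}^{S_3}$ (of dimension $13$ by Corollary~\ref{dimcor}) into that space, so that $K_f^{S_3}$ has dimension $6$ for generic $f$.

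The rank-$r$ stratum of $\PSD_m$ has dimension $r(2m-r+1)/2$, so the stratum of $\PSD_{10}^{S_3}$ with block ranks $(r_t,r_s,r_a)$ has dimension
\[
D(r_t,r_s,r_a) \;=\; \tfrac{r_t(7-r_t)}{2} + \tfrac{r_s(7-r_s)}{2} + r_a.
\]
I would enumerate the ten triples $(r_t,r_s,r_a)$ satisfying $r_t+2r_s+r_a\le 3$ and verify $D\le 6$ in every case, the bound being attained at $(3,0,0)$, $(2,0,1)$, and $(1,1,0)$. Consequently, the image of each such stratum in the $7$-dimensional sextic coefficient space has dimension at most $6$, so the union of these images over all configurations lies inside a proper subvariety. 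On the complement—a dense open subset of symmetric SOS sextics—$K_f^{S_3}$ contains no matrix of rank $\le 3$. Here is where Gröbner bases enter: for each of the three maximal-dimension configurations I would parametrize the stratum explicitly using the entries exhibited in Proposition~\ref{prop:PSD-ternary-sextics}, impose $f=m(x)^TQm(x)$, compute the elimination ideal cutting out the image in sextic coefficient space, and certify that each such ideal is nontrivial.

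To achieve rank exactly $4$, note that the stratum with block ranks $(2,1,0)$ has total rank $4$ and dimension $D(2,1,0)=5+3=8$. I would exhibit a specific symmetric sextic $f_0$ together with a matrix $Q_0\in K_{f_0}^{S_3}$ of block ranks $(2,1,0)$ and verify that the Jacobian of $Q\mapsto f$ restricted to this stratum has full rank $7$ at $Q_0$. By a standard dimension-of-fiber argument the image of the stratum is then $7$-dimensional, so for every $f$ in a dense open subset the fiber contains a $1$-dimensional family of rank-$4$ matrices in $K_f^{S_3}$. Combined with the lower bound, the minimum rank in $K_f^{S_3}$ is exactly $4$ for generic $f$, and by Theorem~\ref{UnqFaceThm} the extreme points attaining this minimum are matrices of rank $4$.

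The main obstacle is the certification step. The dimension upper bound $D\le 6$ is immediate, but proving that each image is a \emph{proper} subvariety of sextic coefficient space—rather than coinciding with it via a highly non-injective parameterization—requires explicit Gröbner basis computation, and the defining polynomials (the $2\times 2$ and $3\times 3$ minors of the symbolic blocks in Proposition~\ref{prop:PSD-ternary-sextics}) can have moderately high degree in many variables. Producing a clean explicit example $(f_0,Q_0)$ for the upper bound is a secondary computational hurdle, but likely far more tractable.
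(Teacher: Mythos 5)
Your route to the lower bound (rank~$\ge 4$ for generic~$f$) is genuinely different from the paper's, and it is cleaner. Both you and the paper start from the block decomposition of Proposition~\ref{prop:PSD-ternary-sextics}, but the paper works through the four block-rank profiles giving total rank exactly~$3$ --- $(3,0,0)$, $(2,0,1)$, $(1,1,0)$, $(0,1,1)$ --- and for each computes an elimination ideal by Gr\"obner basis, exhibiting an explicit nontrivial polynomial condition on $a_1,\dots,a_7$ (a linear equation in case~(a), a degree-$14$ polynomial with $6672$ terms in case~(c), etc.). Your argument replaces all of that with a uniform dimension count: each rank-$\le 3$ stratum of $\PSD_{10}^{S_3}$ has dimension $D(r_t,r_s,r_a)\le 6$, so its image under the linear map $Q\mapsto m(x)^T Q m(x)$ into the $7$-dimensional coefficient space has dimension at most~$6$ and is a proper subvariety. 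That obtains the same conclusion with no computer algebra. The paper's approach buys explicit defining equations for the exceptional locus; yours buys brevity and conceptual clarity. Note also that your list of maximal-dimension profiles $(3,0,0)$, $(2,0,1)$, $(1,1,0)$ coincides with three of the paper's four cases; the fourth, $(0,1,1)$, has $D=4$ and is harmless.

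However, your closing paragraph misdiagnoses where the difficulty lies. You worry that "a highly non-injective parameterization" might cause a $6$-dimensional stratum to map \emph{onto} the full $7$-dimensional coefficient space, and that a Gr\"obner basis is needed to rule this out. That cannot happen: the image of a semialgebraic set under a polynomial (here, linear) map always has dimension at most that of the source. Non-injectivity can only shrink, never enlarge, the image. Your dimension bound already certifies properness, and the Gr\"obner basis step you propose is superfluous for the lower bound.

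For the upper bound --- that rank~$4$ is actually attained at an extreme point --- your Jacobian argument at a specific $(f_0,Q_0)$ with block ranks $(2,1,0)$ is a sensible plan, and this is where the paper's own proof is also thin: it rules out rank~$\le 3$ and then stops, leaning on the preceding citation to Scheiderer, which however concerns the ordinary Gram spectrahedron rather than the symmetry-adapted one. To complete your argument you should additionally check that your $Q_0$ is in fact extreme in $K_{f_0}^{S_3}$: by Theorem~\ref{UnqFaceThm} this requires that the $4$-dimensional face of $\PSD_{10}^{S_3}$ through $Q_0$ (having dimension $\binom{3}{2}+\binom{2}{2}+0=4$) meet the affine slice $L_{f_0}$ in the single point $Q_0$, a transversality condition that holds for generic data but should be stated and verified, not assumed.
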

\begin{proof}
The polynomial $f$ is parametrized by $7$ coefficients 
which we call $a_1, a_2, \ldots, a_7$. It is also represented
as $f = m(x)^T Q m(x)$ by a $10 \times 10$ symmetric matrix 
$Q = (q_{ij})$.
After equating coefficients and using a symmetry adapted basis we get
a block-diagonal $Q$ where 
$$Q_f^{\ytableausetup{boxsize=.4em}\ydiagram{3}} = \begin{pmatrix}
					a_1+2q_{12} & \sqrt{2}(\frac{a_2}{2}+q_{16}+q_{18}) & \sqrt{3}q_{110} \\
					\sqrt{2}(\frac{a_2}{2}+q_{16}+q_{18}) & \alpha & \sqrt{6}q_{410} \\
					\sqrt{3}q_{110} & \sqrt{6}q_{410} & a_7 - 6q_{49} \\
					\end{pmatrix} $$
					$$Q_f^{\ytableausetup{boxsize=.4em}\ydiagram{2,1}} = \begin{pmatrix}  
						a_1-q_{12} & \frac{\sqrt{2}}{2}(a_2-q_{16}-q_{18}) & \frac{\sqrt{6}}{2}(q_{16}-q_{18})  \\ 
					 \frac{\sqrt{2}}{2}(a_2-q_{16}-q_{18}) & \beta_1 &	\frac{\sqrt{3}}{2}( \frac{a_5}{2} - q_{12}-q_{49})  \\ 
					 \frac{\sqrt{6}}{2}(q_{16}-q_{18}) & \frac{\sqrt{3}}{2}( \frac{a_5}{2} - q_{12}-q_{49}) &  \beta_2  \\
						\end{pmatrix} $$
					$$Q_f^{\ytableausetup{boxsize=.4em}\ydiagram{1,1,1}} = a_3 -\frac{a_4}{2} - \frac{a_5}{2} + a_6 + q_{12} - 2q_{16} - 2q_{18} + q_{110} - q_{49} - 2q_{410}$$
where 
\begin{align*}
\alpha &= a_3 + \frac{a_4}{2} + \frac{a_5}{2} + a_6 - q_{12} - 2q_{16} - 2q_{18} - q_{110} +q_{49}- 2q_{410}\\
\beta_1 &= a_3 + \frac{a_4}{2} - \frac{a_5}{4} - \frac{a_6}{2}+ \frac{q_{12}}{2} - 2q_{16} + q_{18} - q_{110} - \frac{q_{49}}{2}+ q_{410}\\
\beta_2 &= a_3 - \frac{a_4}{2} + \frac{a_5}{4} - \frac{a_6}{2} - \frac{q_{12}}{2} - 2q_{16} + q_{18} + q_{110} + \frac{q_{49}}{2} + q_{410}.
\end{align*}
Now, to get a matrix of rank of $3$, we have four cases:
\begin{enumerate}
    \item[a)] Trivial block has rank $3$ and all other blocks have rank zero.
    \item[b)] Trivial block has rank $2$ and the alternating block has rank $1$.
    \item[c)] Trivial block and standard block have rank $1$ each.
    \item[d)] Standard block and alternating block have rank $1$ each.
\end{enumerate}
In the first case we set all of the linear forms in the standard block and the alternating block to zero and eliminate $q_{ij}$ from the 
ideal generated by these polynomials using a Gr\"{o}bner basis. 
The elimination ideal contains
$$a_5 - 2a_1 - 2a_3 + 2a_2 = 0.$$
This means that a generic symmetric $f$ will not have symmetry adapted
representation of rank $3$ as in the first case. 
The other three cases can be similarly investigated.  
For instance, in the second case we get the following relation on the coefficients:
\begin{multline*}
-10a_1a_2^2 - \frac{5}{4}a_2^3 + 10a_1a_2a_3 - \frac{5}{2}a_2^2a_3 - 4a_1a_3^2 + \frac{5}{2}a_2^2a_4 - 3a_2a_3a_4 + \frac{3}{4}a_2a_4^2 - \frac{1}{2}a_3a_4^2 + 12a_1a_2a_5\\ + \frac{1}{4}a_2^2a_5 - 6a_1a_3a_5 + 3a_2a_3a_5 - 2a_2a_4a_5 + a_3a_4a_5 - \frac{1}{4}a_4^2a_5 - 3a_1a_5^2 + \frac{5}{4}a_2a_5^2 - \frac{1}{2}a_3a_5^2 + \frac{1}{2}a_4a_5^2 \\- \frac{1}{4}a_5^3 - 2a_1a_2a_6 + a_2^2a_6 + 4a_1a_3a_6 + a_2a_4a_6 - a_2a_5a_6 - a_1a_6^2 - 3a_1a_2a_7 - a_2^2a_7 + 2a_1a_3a_7 + a_1a_5a_7=0.
\end{multline*}
The fourth case yields one linear and six cubic relations 
in $a_1, \ldots, a_7$. In the third case, a lengthy computation
in Macaulay 2 \cite{M2} gives a single polynomial of degree $14$ with $6672$ terms. Thus we see that SOS representations with three or fewer squares will only appear in very special cases of symmetric ternary sextics.
\end{proof}
This theorem establishes that we should expect to get a rank four 
SOS representation of symmetric ternary sextics. However, it is important to understand what one would get if an SDP were run on $K_f^{S_3}$. 
This question is related to the geometry of the boundary of $K_f^{S_3}$, and in order to shed some light on this geometry we present some experimental results. 

Figure \ref{fig:ternary-sextic-simulation} is obtained as follows: After generating  $100$ random symmetric ternary sextics, we determined that only 12 of these were SOS according to our numerical SDP returning an optimal solution. For each of these 12 symmetric ternary sextics, we re-ran the SDP for $1000$ distinct, randomly generated linear objective functions. Then we computed the rank of the output matrix by SVD with a cutoff tolerance of $10^{-7}$. Each histogram shows the rank of the optimal matrix.
This and other similar experiments we have conducted show that choosing a random linear functional to minimize resulted most commonly in a solution matrix of rank $6$. However, for some polynomials other ranks were not unusual. For example, for several polynomials, over $100$ of the $1000$ objective functions picked out an optimal solution whose rank was judged to be $4$.
\begin{figure}[H] \label{fig:ternary-sextic-simulation}
\begin{center}
    \includegraphics[width=0.8\textwidth]{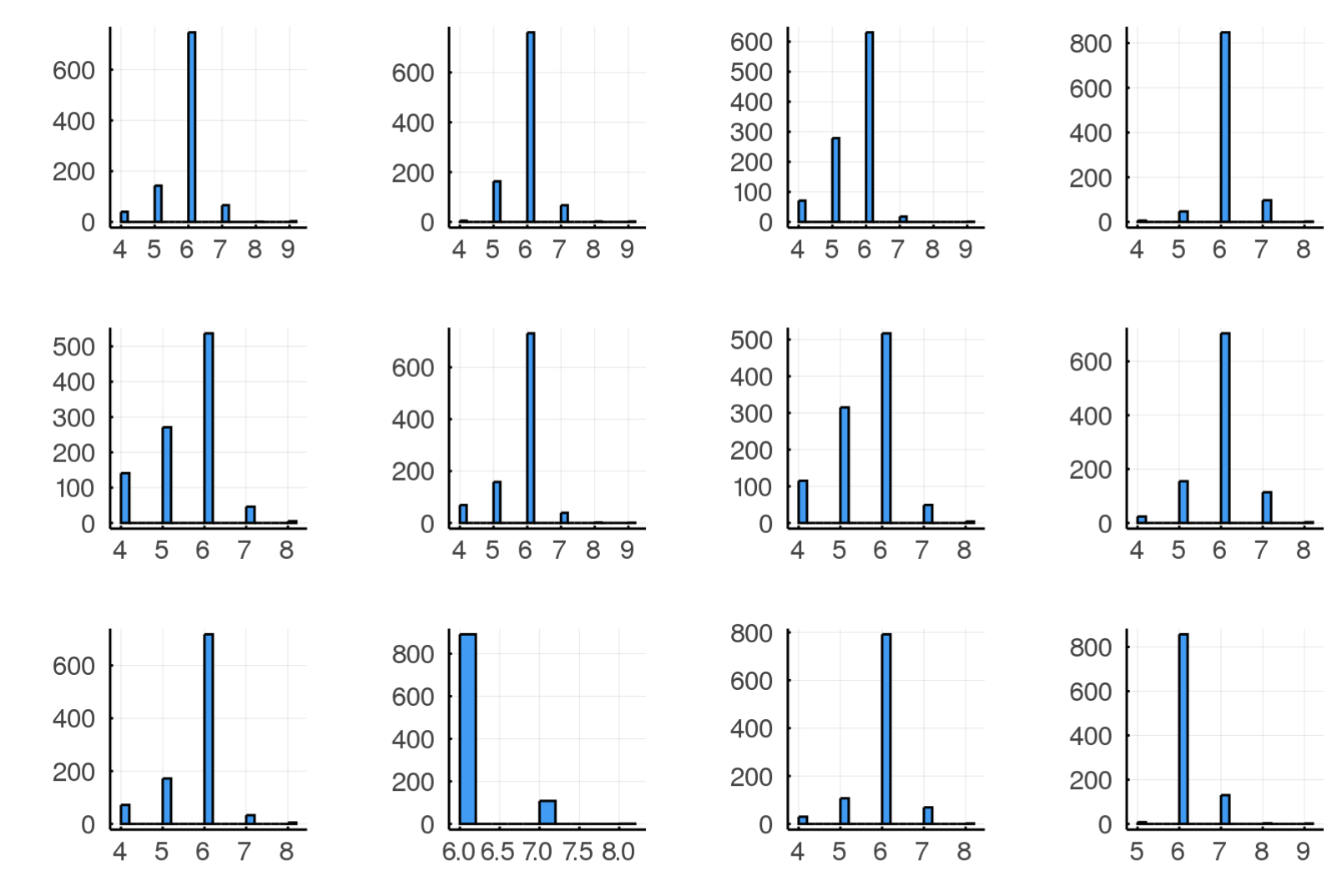}
\end{center}
\caption{Distribution of ranks for SOS decomposition of symmetric ternary
sextics}
\end{figure}

\section{Application to Symmetric Polynomial Inequalities} \label{sec:inequalities}

One application of this machinery is to Muirhead-type inequalities of symmetric polynomials defined on pairs of partitions \cite{CGS2011}.
Let $m_\lambda$, $e_\lambda$, $p_\lambda$, $h_\lambda$, and $s_\lambda$ denote the monomial, elementary, power-sum, homogeneous, and Schur polynomials, respectively, associated to a partition $\lambda$. Given a symmetric polynomial $g(x)$, the \emph{term-normalized symmetric polynomial} is 
$$G(x) := \frac{g(x)}{g(\mathbf{1})}$$
where $g(\mathbf{1})$ is the symmetric polynomial evaluated on the all ones vector.
By $G_\lambda \geq G_\mu$, we mean $G_\lambda(x_1,\ldots,x_n) \geq G_\mu(x_1,\ldots,x_n)$, on the nonnegative orthant.
That is, the inequality holds for any number of variables $n$, but only for $x_i \geq 0$, $i = 1, \ldots, n$.
We denote the term-normalized symmetric polynomials for monomial, elementary, power-sum, homogeneous, and Schur polynomials by $M_\lambda$, $E_\lambda$, $P_\lambda$, $H_\lambda$, and $S_\lambda$, respectively.  

The following theorem is a summary of known results (special cases of which go back to Maclaurin, Muirhead, Newton, and Schur, for example), which are proven in \cite{CGS2011}, \cite{M1902}, and \cite{S2016}.
\begin{theorem} \label{thm:inequalitiesCGS} Let $\lambda$ and $\mu$ be partitions such that $|\lambda| = |\mu|$.  Then
$$\begin{array}{c c l}
M_\lambda \leq M_\mu  &\iff  &\mu \succeq \lambda\\
E_\lambda \leq E_\mu  &\iff  &\lambda \succeq \mu\\
P_\lambda \leq P_\mu  &\iff  &\mu \succeq \lambda\\
S_\lambda \leq S_\mu  &\iff  &\mu \succeq \lambda
\end{array}$$
whereas $\mu \succeq \lambda$ implies that $H_\lambda \leq H_\mu$, i.e.,
$$\begin{array}{c c l}
H_\lambda \leq H_\mu  &\Longleftarrow  &\mu \succeq \lambda\\
\end{array}$$
\end{theorem}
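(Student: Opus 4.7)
The theorem collects classical inequalities connecting the dominance order on partitions with term-normalized symmetric polynomial inequalities on the nonnegative orthant. The plan is to reduce each forward implication to verifying the inequality across a single elementary transfer, then handle the five families via their combinatorial structure, and finally establish the converses by specialization. By the Hardy--Littlewood--P\'olya characterization, $\mu \succeq \lambda$ (with $|\mu|=|\lambda|$) if and only if $\lambda$ is reachable from $\mu$ by a finite sequence of \emph{Robin Hood transfers}: subtract $\epsilon$ from a larger part $\mu_i$ and add it to a smaller part $\mu_j$, where $\epsilon \leq (\mu_i - \mu_j)/2$. Since all five polynomial inequalities in question are transitive, it suffices in each case to check preservation under one such transfer.

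For $M_\lambda \leq M_\mu$ under a single transfer, this is Muirhead's inequality \cite{M1902}, essentially weighted AM--GM applied to each monomial orbit. For $P_\lambda = \prod_i P_{\lambda_i}$ the factorization combined with the power-mean inequality applied to the two transferred parts yields $P_\lambda \leq P_\mu$ on the nonnegative orthant. For Schur polynomials one uses the semistandard tableau expansion $s_\lambda(x) = \sum_T x^T$ and the Kostka expansion $s_\lambda = \sum_\nu K_{\lambda \nu} m_\nu$, together with the already established $M$-case and a careful comparison of the normalizing factors $s_\lambda(\mathbf{1})$ via the hook-content formula. For $H$, the factorization $h_\lambda = \prod_i h_{\lambda_i}$ combined with the log-concavity $h_{k-1}(x)\, h_{k+1}(x) \leq h_k(x)^2$ on $\R^n_{\geq 0}$ (Newton-type inequalities) handles the single-transfer step. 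Finally, the $E$ case reverses direction because the involution $\omega$ on the ring of symmetric functions interchanges $e_\lambda$ and $h_{\lambda'}$, while conjugation of partitions reverses dominance; composing the two reversals converts the $H$ implication into the $E$ equivalence.

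For the converses in the $M$, $E$, $P$, $S$ cases, specialize at the family of test vectors $x = (t,\ldots,t,1,\ldots,1) \in \R^n_{\geq 0}$ with $k$ copies of $t$ and let $t \to \infty$. The leading power of $t$ on each side is governed by the partial sum $\lambda_1 + \cdots + \lambda_k$ versus $\mu_1 + \cdots + \mu_k$, so the polynomial inequality forces $\mu_1 + \cdots + \mu_k \geq \lambda_1 + \cdots + \lambda_k$ for every $k$, which is exactly $\mu \succeq \lambda$. For Schur polynomials the principal specialization $s_\lambda(1,t,t^2,\ldots)$ used in our proof of Theorem \ref{theorem:multiplicity-as-integer-solutions} supplies the required leading-order comparison, and for $E$ the analogous specialization works after swapping the roles of dominant and dominated partition.

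The main technical obstacle is the Schur forward direction, where the hook-content normalization must interact cleanly with the Kostka expansion: the pointwise inequality $K_{\mu\nu} \geq K_{\lambda\nu}$ fails in general, so one needs a more delicate tableau grouping (or an RSK argument) to show that the full normalized difference $S_\mu - S_\lambda$ remains nonnegative on $\R^n_{\geq 0}$. The absence of a converse for $H$ is not an omission but a genuine feature: this implication is known not to be reversible in general, which is precisely the setting of the counterexamples to Conjecture 7.2 of \cite{CGS2011} discussed in Section \ref{sec:inequalities}.
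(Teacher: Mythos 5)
The paper does not prove this theorem at all; it simply cites \cite{CGS2011}, \cite{M1902}, and \cite{S2016} and states the result as known. So there is no ``paper's own proof'' to compare against. That said, your blind attempt contains several genuine problems worth flagging.

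The reduction to single elementary transfers and the test-vector specializations for the converses are a sound skeleton, and the $M$ case (Muirhead) and $P$ case (log-convexity of $k \mapsto \log p_k(x)$, or Chebyshev/H\"older) go through. The other three families do not work as written. For $E$: the involution $\omega$ sends $e_\lambda$ to $h_\lambda$, \emph{not} to $h_{\lambda'}$, and more fundamentally $\omega$ is a ring automorphism of the ring of symmetric functions, not a pointwise transformation of evaluations on $\R^n_{\geq 0}$ --- you cannot push an inequality $g(x)\geq 0$ through $\omega$. The correct elementary route for $E$ is Newton's inequalities: the normalized sequence $E_k = e_k/\binom{n}{k}$ is log-concave for $x \in \R^n_{\geq 0}$, so a single transfer immediately gives $E_\lambda \leq E_\mu$ when $\lambda \succeq \mu$. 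For $H$: your argument has the direction reversed. Log-concavity of $h_k$ (which is what $h_{k-1}h_{k+1}\leq h_k^2$ says) makes the raw product $h_\lambda$ \emph{larger} the more spread out $\lambda$ is, i.e.\ it gives $h_\lambda \geq h_\mu$ when $\mu \succeq \lambda$. The claimed inequality $H_\lambda \leq H_\mu$ goes the other way, so the normalizing constants $h_k(\mathbf{1}) = \binom{n+k-1}{k}$ must not only undo but overpower this log-concavity. That is precisely why the $H$-implication was an open problem after \cite{CGS2011} and is the content of \cite{S2016}; it has no quick Newton-type proof. Finally, you correctly identify the $S$ forward direction as the other hard spot, but the ``more delicate tableau grouping'' you invoke is not supplied, so that step remains a gap.

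In short: your overall architecture is reasonable and matches how one would piece together a proof from the literature, but the $E$, $H$, and $S$ forward steps as written either rest on a false claim ($E$ via $\omega$, $H$ via log-concavity alone) or are acknowledged placeholders ($S$). These are exactly the cases for which the paper outsources to the references rather than arguing directly.
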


The converse for the homogeneous symmetric functions was conjectured in \cite{CGS2011} in 2011.
In \cite{heatonshankar2019sos}, two authors of the current paper used the  theory of symmetric SOS polynomials to disprove this conjecture by providing a counterexample.
Specifically, $\big(H_{44} - H_{521}\big) (x_1^2,x_2^2,x_3^2)$ is shown to be SOS, thus implying the inequality $H_{44} \geq H_{521}$. This is despite partitions $\mu = (4 \, 4)$ and $ \lambda = (5 \, 2 \, 1)$ being incomparable in the dominance order.

In fact, many counterexamples were found by searching over partitions of 8, 9 and 10.  Below we provide a poset of all differences of term-normalized homogeneous polynomials that are SOS. That is, for each arrow going from $\lambda$ to $\mu$, $\big(H_\mu - H_\lambda\big)(x_1^2,x_2^2,x_3^2)$ is an SOS polynomial.  The black arrows coincide with the dominance order, while the blue arrows are for incomparable pairs of partitions, i.e. each blue arrow is an explicit counterexample to the conjecture.

\begin{figure}[H]
	\centering
		\includegraphics[width=0.8\textwidth]{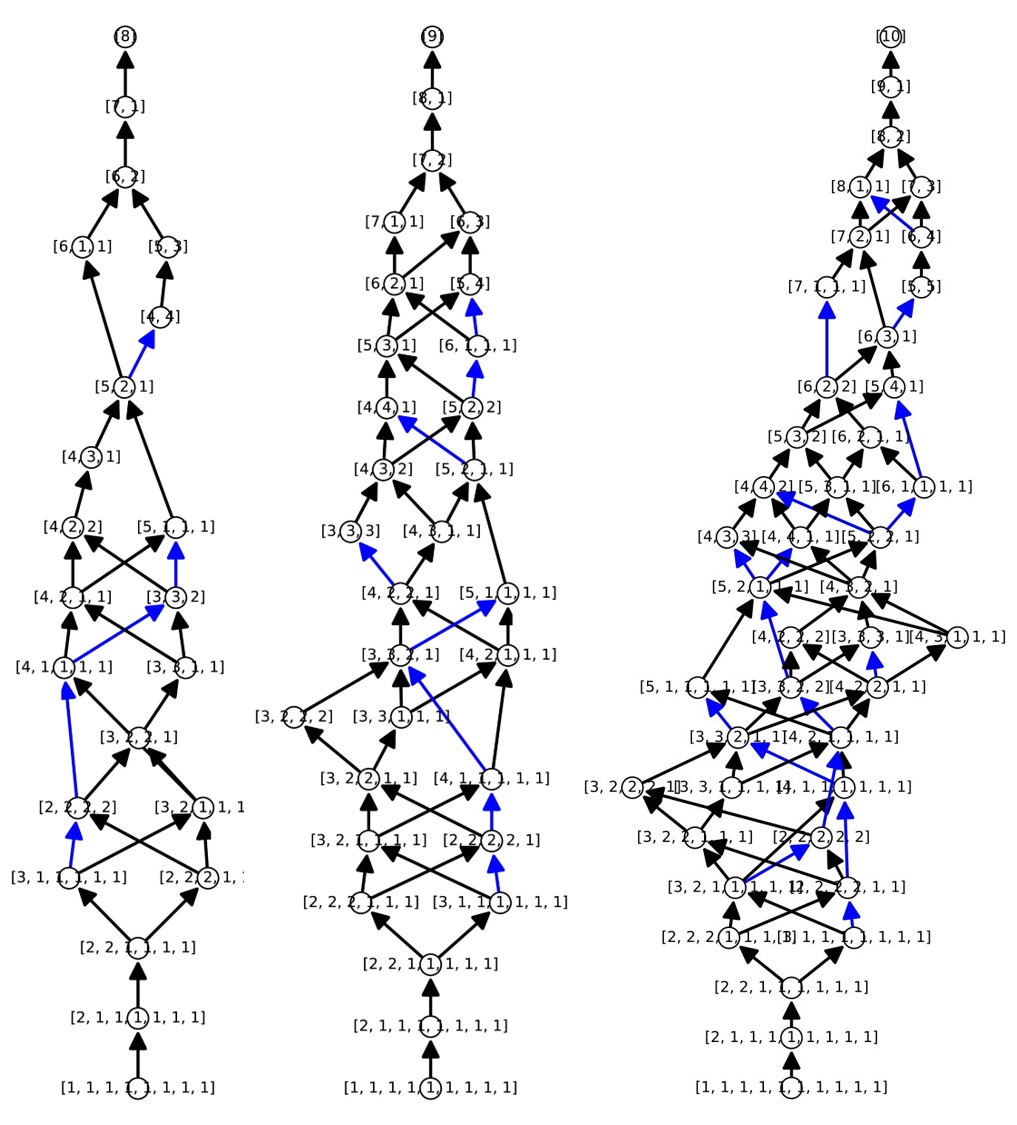}
	\label{fig:HSOS_Poset}
\end{figure}

\bibliographystyle{plain}
\bibliography{references}

\end{document}